\newtheorem{theorem}{Theorem}
\newtheorem{lemma}{Lemma}
\def \D{\mathbb{D}}
\def \H{\mathbb{H}}
\def \E{\mathbb{E}}
\def \P{\mathbb{P}}
\def \C{\mathbb{C}}
\def \Z{\mathbb{Z}}
\def \N{\mathbb{N}}
\def\I {\mathbb{I}}
\def \R{\mathbb{R}}
\def \({\left(}
\def \){\right)}
\def \[{\left[}
\def \]{\right]}
\def\ha{\widehat}
\def\lin{\overline}
\def\sem{\setminus}
\def\pa{\partial}
\def\til{\widetilde}
\def\no{\noindent}
  \DeclareMathOperator{\Cont}{Cont}
 \DeclareMathOperator{\Area}{Area}
\DeclareMathOperator{\crad}{crad} \DeclareMathOperator{\doub}{doub}
\DeclareMathOperator{\dist}{dist} 
\DeclareMathOperator{\Imm}{Im } \DeclareMathOperator{\Ree}{Re }
\DeclareMathOperator{\rad}{rad}
\begin{document}
\title{Multipoint Estimates for Radial and Whole-plane SLE}
\author{Benjamin Mackey\thanks{Michigan State University and Cleveland State University.} and Dapeng Zhan\thanks{Michigan State University. Partially supported by NSF grant  DMS-1056840 and Simons Foundation grant \#396973.}}
\maketitle

\begin{abstract}
We prove upper bounds for the probability that a radial SLE$_{\kappa}$ curve, $\kappa\in(0,8)$, comes within specified radii of $n$ different points in the unit disc. Using this estimate, we then prove a similar upper bound for  a whole-plane SLE$_{\kappa}$ curve. We then use these estimates to show that the lower Minkowski content of both the radial and whole-plane SLE$_{\kappa}$ traces restricted in a bounded region have finite moments of any order.
\end{abstract}

\section{Introduction}
The goal of this paper is to prove estimates for the probability that radial SLE and whole-plane SLE paths pass near any finite collection of points. These estimates are then used to show that an important geometric quantity of the path has finite moments. The Schramm-Loewner evolution, abbreviated as SLE, is a family of random processes first introduced by Oded Schramm as a candidate for the continuous scaling limit of several discrete lattice models from statistical physics \cite{Schramm}. The process SLE$_{\kappa}$ depends on $\kappa>0$, and for various specific values of $\kappa$, such as $2,3,4,16/3,6,8$, SLE$_{\kappa}$ has been proven to be the scaling limit of some lattice model (\cite{LSW,Smirnov perc,Smirnov Ising,Schramm Sheff}).

The above results about scaling limits all assume that the involved curves have a particular parametrization. In particular, they assume that the capacity of the curves grows at a constant rate. This capacity parametrization is convenient for calculations about the SLE curves, but is not natural for the lattice models. We want to be able to run the paths in the lattice model so that each segment takes the same length of time. The first instinct may be to parametrize the SLE path by arc length and check for convergence, but Beffara \cite{Beffara} proved that the dimension of the SLE$_{\kappa}$ curve is $d:=(1+\kappa/8) \wedge 2$, and so the arc length is always infinite. If $\kappa \geq 8$, the SLE paths are space filling \cite{Rhode Schramm}. We will focus in this paper on the case $\kappa<8$.

There has been recent work developing a $d$-dimensional measurement of length which can be used to parametrize the SLE paths, which has been called the natural parametrization. In \cite{Law Sheff}, the Doob-Meyer theorem was used to create an increasing process which was called the natural parametrization, or natural length, for $\kappa<5.021...$, and it was conjectured to coincide with the $d$-dimensional Minkowski content of the curve. The $d$-dimensional Minkowski content of a set $E \subset \C$ is defined by
$$\Cont_d(E)=\lim_{r \to 0} r^{d-2} \Area\{ z \in \C: \dist(z,E) < r \},$$ provided that the limit exists. The  lower Minkowski content $\underline{\Cont}_d(E)$ is similarly defined with the limit replaced by the lower limit, which always exists.
In \cite{Law Zhou}, the Doob-Meyer construction was extended to all $\kappa<8$. In \cite{Law Rez mink content}, it was proven that the $d$-dimensional Minkowski content exists almost surely, and that it agrees with the natural parametrization already constructed. In \cite{Law Vik}, it was proven that appropriately time scaled loop erased random walk converges to chordal SLE$_2$ in the natural parametrization. Later the result was extended to radial SLE$_2$ (\cite{Law Vik radial}).

An important tool in the construction and analysis of the natural length is the Green's function, which gives the normalized probability that the SLE path passes through a point. Speaking more precisely, the Green's function at $z$ is defined by $G(z)=\lim_{r \downarrow 0}r^{d-2}\P[\dist(\gamma,z) \leq r]$, where $\gamma$ is the SLE path, assuming this limit exists. 
More generally, the multipoint Green's function gives the normalized probability that the path passes through multiple fixed points. For distinct points $z_1,\dots, z_n$, the multipoint Green's function is defined by
\begin{equation}\label{eq: multipoint Green's function}
G(z_1, \dots, z_n)=\lim_{r_1,\dots,r_n \downarrow 0} \(\prod_{k=1}^n r_k^{d-2}\) \P\[  \bigcap_{k=1}^n \{\dist(\gamma, z_k) < r_k \}\],
\end{equation}
if this limit exists.

There are several varieties of SLE, including chordal SLE which connects boundary points, radial SLE which connects a boundary point and an interior point, and whole-plane SLE which connects interior points. Each type of SLE has its own Green's functions. The
one-point chordal SLE Green's function $G$ with conformal radius in place of Euclidean distance was first shown to exist and used in \cite{Rhode Schramm}, and the exact form is known. 
In \cite{Law Werness}, the two point Green's function in terms of conformal radius was proven to exist. In \cite{Law Rez mink content}, the one-point and two-point chordal Green's functions in terms of Euclidean distance, i.e., the original definition, are proven to exist, and differ from the conformal radius version of Green's functions in \cite{Rhode Schramm,Law Werness} by some multiplicative constants depending only on $\kappa$.

In \cite{Law Werness}, the authors conjectured that their construction for the two point Green's function can be generalized to show that the higher order Green's function exists. For any $n \geq 1$, the authors of \cite{Rez Zhan upper bound} find an upper bound for the probability that chordal SLE passes near $n$ points, which extends the upper bound in \cite{Law Rez two point Green's} for $n=2$. Using this upper bound, they also prove that the Minkowski content of the chordal SLE path has finite $n$-th moment for any $n>0$. In \cite{Rez Zhan lower bound}, the same authors prove that the bound in \cite{Rez Zhan upper bound} is sharp up to a multiplicative constant, and using this sharp bound they prove that the limit (\ref{eq: multipoint Green's function}) for chordal SLE exists for all distinct points $z_1, \dots, z_n$ in the upper half-plane. They also find some rate of convergence and modulus of continuity for the Green's functions.

The Green's functions for radial SLE are less well studied. Existence of the
one-point conformal radius Green's function is proven in \cite{AKL}, but an exact form is only found for $\kappa=4$. In this paper, we use one-point estimates found in \cite{AKL} and follow the strategy in \cite{Rez Zhan upper bound} to prove the following multipoint estimate:

\begin{theorem}\label{thm: multi point estimate radial}Fix $\kappa \in (0,8)$. Let $\gamma$ be a radial SLE$_{\kappa}$ in the unit disc $\D$ from $1$ to $0$, let $z_1,\dots,z_n$ be distinct points in $\overline{\D}\sem\{1,0\}$, and let $z_0=1$. Let $y_k=1-|z_k|$ be the distance of each point to the boundary of $\D$, and define $l_k=\min\{ |z_k|,|z_k-1|, |z_k-z_1|, \dots, |z_k-z_{k-1}|  \}$. Then there exists an absolute constant $C_n\in(0,\infty)$ depending only on $\kappa$ and $n$ such that
\begin{equation}\label{eq: multipoint bound radial}
\P[ \bigcap_{k=1}^n \{\dist(\gamma, z_k) < r_k \}] \leq C_n \prod_{k=1}^{n} \frac{P_{y_k}(r_k \wedge l_k)}{P_{y_k(l_k)}}.
\end{equation}
\end{theorem}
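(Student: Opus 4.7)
The plan is to proceed by induction on $n$, following the strategy of \cite{Rez Zhan upper bound} and using the one-point estimates from \cite{AKL} as an input. The base case $n=1$ is the assertion
\[
\P[\dist(\gamma,z)<r]\le C\,\frac{P_y(r\wedge l)}{P_y(l)}
\]
with $y=1-|z|$ and $l=\min\{|z|,|z-1|\}$; this is exactly the radial one-point bound established in \cite{AKL}, once one observes (using monotonicity in $r$) that the case $r\ge l$ reduces to $r=l$.

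For the inductive step, assume the estimate for every configuration of fewer than $n$ points. Given $z_1,\dots,z_n$ and $r_1,\dots,r_n$, define $\tau_k$ to be the first time $\gamma$ enters $\lin{B(z_k,r_k)}$, so that the event in (\ref{eq: multipoint bound radial}) is $\bigcap_k\{\tau_k<\infty\}$. Stratify by the index $k^\star$ minimizing $\{\tau_k:\tau_k<\infty\}$; since there are at most $n$ choices, a union bound costs only a factor of $n$. On the stratum with minimizer $k^\star$, the event $\{\tau_{k^\star}<\infty\}$ is controlled directly by the one-point estimate at $z_{k^\star}$, which contributes the factor $P_{y_{k^\star}}(r_{k^\star}\wedge l_{k^\star})/P_{y_{k^\star}}(l_{k^\star})$. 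By the strong Markov property, conditional on $\gamma[0,\tau_{k^\star}]$ the remainder $\gamma|_{[\tau_{k^\star},\infty)}$ is a radial SLE$_\kappa$ in the slit domain $D:=\D\sem\gamma[0,\tau_{k^\star}]$ from the tip to $0$. Let $\phi:D\to\D$ be the conformal map with $\phi(\text{tip})=1$, $\phi(0)=0$; then $\phi\circ\gamma|_{[\tau_{k^\star},\infty)}$ is a standard radial SLE$_\kappa$ in $\D$ from $1$ to $0$, to which the inductive hypothesis applies at the $n-1$ image points $\til z_k:=\phi(z_k)$, $k\ne k^\star$, with new radii $\til r_k\asymp|\phi'(z_k)|\,r_k$.

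It remains to show that the product of the resulting $n-1$ factors $P_{\til y_k}(\til r_k\wedge\til l_k)/P_{\til y_k}(\til l_k)$ is comparable, up to a constant depending only on $\kappa$ and $n$, to the product of the original factors with index $k\ne k^\star$. The control comes from the Koebe $1/4$ and distortion theorems: at any $z_k$ ($k\ne k^\star$) the distance from $z_k$ to $\gamma[0,\tau_{k^\star}]$ is bounded below by a constant multiple of $l_k$ (because $|z_k-z_{k^\star}|\ge l_k\wedge l_{k^\star}$ by the definition of $l_k,l_{k^\star}$, and the curve only touches $\lin{B(z_{k^\star},r_{k^\star})}$), so $|\phi'(z_k)|$ is comparable to the ratio of conformal radii, and hence $\til y_k,\til l_k,\til r_k$ are each comparable to $y_k,l_k,r_k$ up to absolute multiplicative constants. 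One then invokes a quasi-invariance property of $P_y(r\wedge l)/P_y(l)$ under bounded multiplicative perturbation of its arguments—a property which follows from the explicit power-law asymptotics for $P_y$ given in \cite{AKL}—to complete the induction.

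The main obstacle is precisely the step of verifying that the ratios $P_{\til y_k}(\til r_k\wedge\til l_k)/P_{\til y_k}(\til l_k)$ are dominated by constant multiples of the original ratios: this requires both the correct choice of ordering (so that $l_k$ is robust against the distortion introduced by $\phi$, and in particular cannot be shrunk by the removal of an earlier-indexed point) and a uniform estimate on $P_y(\cdot)/P_y(l)$ as its arguments are rescaled by bounded factors. The ordering built into the definition of $l_k$, which forces $l_k$ to be smaller than the distance to every previously considered point, is exactly what prevents the distortion from concentrating near $z_{k^\star}$, and is the analogue of the geometric ordering used in the chordal argument of \cite{Rez Zhan upper bound}.
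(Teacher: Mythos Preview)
Your inductive scheme has a genuine gap at precisely the step you flag as the ``main obstacle,'' and it cannot be closed by Koebe distortion alone. The claim that the distance from $z_k$ to $\gamma[0,\tau_{k^\star}]$ is bounded below by a constant multiple of $l_k$ is false: on the stratum where $k^\star$ is the first ball hit, the only lower bound available on $\dist(z_k,\gamma[0,\tau_{k^\star}])$ for $k\ne k^\star$ is $r_k$, not $l_k$. The curve travels from $1$ all the way to $\partial B(z_{k^\star},r_{k^\star})$ and is free to come within distance $r_k$ of $z_k$ en route; the justification you give (``the curve only touches $\overline{B(z_{k^\star},r_{k^\star})}$'') controls only the terminal point of $\gamma[0,\tau_{k^\star}]$, not the whole path. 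When the curve does approach $z_k$ at scale $r_k$, the map $\phi$ has $|\phi'(z_k)|$ of order $1/r_k$, the new boundary distance $\til y_k:=1-|\phi(z_k)|$ can be tiny even though $y_k$ is of order one, and $\til l_k,\til r_k$ bear no uniform relation to $l_k,r_k$. Consequently $P_{\til y_k}(\til r_k\wedge \til l_k)/P_{\til y_k}(\til l_k)$ need not be bounded by a constant times $P_{y_k}(r_k\wedge l_k)/P_{y_k}(l_k)$, and the induction does not close.

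This obstruction is exactly why the paper (following the chordal argument of \cite{Rez Zhan upper bound}) does \emph{not} argue by induction on $n$. Instead it builds, for each $z_j$, a dyadic family of concentric circles of radii $l_j/4,\,l_j/16,\dots,r_j$, and proves a structural estimate (Theorem~\ref{thm: concentric circles}, resting on Theorem~\ref{thm: ordered Green's function}) for the probability that $\gamma$ visits every circle in the combined family. The key mechanism is that each excursion of $\gamma$ away from one family of circles and back---which is precisely how the curve can get close to some other $z_k$ before completing its approach to $z_j$---is charged an explicit boundary-exponent cost $(r_0/R_0)^{\alpha/4}$ via the extremal-length boundary estimate (Lemma~\ref{lem: final boundary est}); summing over all admissible visitation orders then yields a constant depending only on $n$. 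The naive Markov/induction approach does not see this cost, and already fails at $n=2$: this is why even the two-point chordal bound required an argument beyond ``condition on the first hit and apply the one-point estimate.''
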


The functions $P_y(x)$ will be defined in the next section, but the idea is that these functions represent both interior and boundary estimates. whole-plane SLE is closely related to radial SLE, and we will show that Theorem \ref{thm: multi point estimate radial} implies a similar estimate for the whole-plane SLE trace.

\begin{theorem}\label{thm: multi point estimate whole-plane}
Fix $ \kappa \in (0,8)$, and let $\gamma^*$ be a whole-plane SLE$_{\kappa}$ trace from  $\infty$ to $0$. Let $z_1, \dots, z_n$ be distinct points in $\C \backslash \{0\}$. For each $k=1, \dots, n,$ let $0 < r_k <|z_k|$ and define $l_k=\min\{|z_k|, |z_k-z_1|, \dots, |z_k-z_{k-1}|\}$. Then there is a constant $C_n\in(0,\infty)$ depending only on $\kappa$ and $n$ such that
\begin{equation}\label{eq: mulipoint bound whole-plane}
\P[ \bigcap_{k=1}^n \{ \dist(\gamma^*,z_k) < r_k \} ] \leq C_n \prod_{k=1}^n \( \frac{r_k\wedge l_k}{l_k} \)^{2-d}.
\end{equation}
\end{theorem}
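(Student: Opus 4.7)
The plan is to reduce Theorem~\ref{thm: multi point estimate whole-plane} to Theorem~\ref{thm: multi point estimate radial} by localizing the whole-plane trace to a radial trace inside a large ball. Set $A := 4\max_k|z_k|$ and let $\tau$ be the first time the trace $\gamma^*$ enters the closed disc $\{|z|\le A\}$. Because $\gamma^*$ originates at $\infty$ and $\gamma^*|_{(-\infty,\tau)}\subset\{|z|>A\}$, the pre-$\tau$ trace lies at distance at least $3A/4 > |z_k| > r_k$ from every $z_k$, so the event $\bigcap_k\{\dist(\gamma^*,z_k)<r_k\}$ is determined entirely by $\gamma^*|_{[\tau,\infty)}$; moreover all $z_k$ lie in the simply connected domain $\Omega_\tau$ (the component of $\C\sem\gamma^*(-\infty,\tau]$ containing $0$). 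Conditioning on $\mcF_\tau$ and invoking the target-independence/domain Markov property of whole-plane $\SLE_\kappa$, the continuation $\gamma^*|_{[\tau,\infty)}$ has the law of a radial $\SLE_\kappa$ in $\Omega_\tau$ from $\gamma^*(\tau)$ to $0$.

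I would then pull back to $\D$ by the conformal map $\phi\colon\Omega_\tau\to\D$ with $\phi(0)=0$ and $\phi(\gamma^*(\tau))=1$, so that $\phi\circ\gamma^*|_{[\tau,\infty)}$ is a radial $\SLE_\kappa$ from $1$ to $0$ in $\D$. Since $\{|z|<A\}\subseteq\Omega_\tau$, Schwarz's lemma applied to $w\mapsto\phi(Aw)$ gives $|\phi(z)|\le|z|/A$ on $\{|z|<A\}$; hence $w_k:=\phi(z_k)$ lies in $\{|w|\le1/4\}$, and $y_k':=1-|w_k|\in[3/4,1]$ is bounded away from $0$. Standard Koebe distortion estimates for $\phi$ on $\{|z|<A\}$ show that $|\phi'|$ is uniformly comparable on the compact set $\{|z|\le A/4\}$ and
\[
|\phi(z)-\phi(z')| \asymp |\phi'(0)|\cdot|z-z'| \qquad \text{for } z,z'\in\{|z|\le A/4\},
\]
with universal constants. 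Consequently, up to bounded multiplicative factors, the transformed radii $\til r_k$ (the smallest $\rho$ with $\phi(B(z_k,r_k))\subseteq B(w_k,\rho)$) and the quantities $l_k':=\min\{|w_k|,|w_k-w_1|,\dots,|w_k-w_{k-1}|\}$ satisfy $\til r_k\asymp|\phi'(0)|\,r_k$ and $l_k'\asymp|\phi'(0)|\,l_k$, so the ratio $(\til r_k\wedge l_k')/l_k'$ is comparable to $(r_k\wedge l_k)/l_k$.

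I would now apply Theorem~\ref{thm: multi point estimate radial} to the radial SLE $\phi\circ\gamma^*|_{[\tau,\infty)}$ with marked points $w_k$ and radii $\til r_k$. The resulting bound has the form $C_n\prod_k P_{y_k'}(\til r_k\wedge l_k')/P_{y_k'}(l_k')$. Since each $y_k'\in[3/4,1]$ is bounded away from the boundary, I expect $P_y(x)\asymp x^{2-d}$ uniformly for $y\in[3/4,1]$ and $x$ in the relevant bounded range (to be verified against the definition of $P_y$ in the next section). With this simplification, the bound becomes $C_n'\prod_k((\til r_k\wedge l_k')/l_k')^{2-d}\asymp\prod_k((r_k\wedge l_k)/l_k)^{2-d}$, and since this final expression is deterministic, taking expectation over $\mcF_\tau$ yields the theorem.

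The main obstacle I anticipate is verifying the assertion $P_y(x)\asymp x^{2-d}$ when $y$ is bounded away from $0$: this is essentially the statement that deep in the interior the boundary correction encoded in $P_y$ becomes negligible and one recovers the pure interior exponent $2-d$ of the Green's function. A secondary technical matter is the careful invocation of the domain Markov/target-independence of whole-plane $\SLE_\kappa$ at the hitting time $\tau$ of a circle; this is standard, but should be cited explicitly. The distortion inputs (Schwarz, Koebe) are routine.
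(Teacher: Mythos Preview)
Your proposal is correct and follows essentially the same route as the paper: stop the whole-plane curve at the first hitting of a large circle, invoke the domain Markov property to obtain a radial SLE in the complementary domain, conformally map to $\D$, apply Theorem~\ref{thm: multi point estimate radial}, and use Schwarz/Koebe distortion to transfer the bound. The only cosmetic difference is that the paper lets the circle radius $N\to\infty$ so that $y_k^T\to 1$ and the distortion ratios tend to $1$ exactly, whereas you fix $A=4\max_k|z_k|$ and work with uniform distortion constants; your concern about $P_y(x)\asymp x^{2-d}$ is immediately resolved by noting that $l_k'\le|w_k|\le 1/4<y_k'$, so both arguments of $P_{y_k'}$ lie in the regime $x\le y$ where $P_y(x)=y^{\alpha-(2-d)}x^{2-d}$ and the prefactor cancels in the ratio (and the term $|w_k-1|\ge 3/4$ required by the radial $l_k$ never achieves the minimum).
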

Note that the expression of this bound is simpler than Theorem \ref{thm: multi point estimate radial}, since there are no boundary effects with which to be concerned.

We want to show that the Minkowski content for radial SLE has all finite moments, like was done in the chordal case in \cite{Rez Zhan upper bound}, but the Minkowski content of the radial SLE path has not been rigorously proven to exist. 
Using the the existence of Minkowski content of chordal SLE and the weak equivalence between radial SLE and chordal SLE, one can easily prove the existence of the Minkowski content of a radial SLE curve up to any time that the curve does not reach its target and the boundary of the domain is not completely separated from the target. The existence has not been extended to the whole radial SLE curve (including its end point). In the theorem below, we work on the lower Minkowski content instead to avoid this issue. When $\kappa\in(0,4]$, the radial SLE$_\kappa$ curve has Minkowski content in any domain that does not contain a neighborhood of the target, so the theorem implies upper bound of moments of Minkowski contents of the curve restricted in such domains.


\begin{theorem}\label{thm: Minkowski content moments}
Fix $\kappa \in (0,8)$.
\begin{itemize}
\item[a)] Let $\gamma$ be a radial SLE$_{\kappa}$ trace in $\D$ from $1$ to $0$. Then $\E[ \underline{\Cont}_d(\gamma)^n ]< \infty$ for all $n \in \N$.
\item[b)] Let $\gamma^*$ be a whole-plane SLE$_{\kappa}$ trace from $0$ to $\infty$, and suppose $D \subset \C$ is compact. Then $\E[\underline{\Cont}_d(\gamma^* \cap D)^n]<\infty$ for every $n \in \N$.
\end{itemize}
\end{theorem}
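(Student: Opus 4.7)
The plan is to bound $\E[\underline{\Cont}_d(\gamma)^n]$ by expanding the $n$-th power of area as an $n$-fold integral, swapping expectation with integration via Fubini, applying the multipoint estimate of Theorem~\ref{thm: multi point estimate radial}, and verifying that the resulting integral is uniformly bounded as $r\downarrow 0$. The first step is the reduction
$$\E[\underline{\Cont}_d(\gamma)^n] \le \liminf_{r\downarrow 0}\, r^{n(d-2)} \int_{\D^n} \P\left[\bigcap_{k=1}^n \{\dist(z_k,\gamma)<r\}\right] dz_1\cdots dz_n,$$
obtained from the definition of the lower Minkowski content, Fatou's lemma applied to the outer expectation of the liminf, and Fubini on the inner area-to-the-$n$-th-power. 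It then suffices to show the right-hand side is finite; by the $n!$-fold label symmetry, one may restrict to the ordered region where $l_k=\min\{|z_k|,|z_k-1|,|z_k-z_1|,\dots,|z_k-z_{k-1}|\}$ coincides with the quantity in Theorem~\ref{thm: multi point estimate radial}, and apply the estimate (\ref{eq: multipoint bound radial}) directly.

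The integration is then carried out inductively on $n$, integrating out $z_n$ last. With $z_1,\dots,z_{n-1}$ fixed, only the factor $P_{y_n}(r\wedge l_n)/P_{y_n}(l_n)$ depends on $z_n$, and $l_n$ equals the Euclidean distance from $z_n$ to the finite set $\{0,1,z_1,\dots,z_{n-1}\}$. The core auxiliary lemma one needs is
$$\int_{\D} r^{d-2}\,\frac{P_y(r\wedge l)}{P_y(l)}\, dA(z) \le C,$$
with $C$ independent of $r\in(0,1)$ and of the positions of the (at most $n+1$) exclusion points defining $l$. This step is the main obstacle, since $P_y$ has a case split at $x=y$: proving the lemma requires partitioning $\D$ according to whether $l\le r$ (where the area is $O(nr^2)$ so the contribution is $O(nr^d)\to 0$) or $l>r$, and within the latter subdividing further according to the relative positions of $l$, $r$, and $y$. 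Each resulting piece reduces to an elementary power-function integral, of type $\int l^{d-2}\, dA$ near the exclusion points or $\int y^{-(2-d-\alpha)}\,r^\alpha\, l^{-(2-d)}\, dA$ near the boundary, and integrability follows from $d\in(0,2)$ together with the boundary exponent $\alpha>2-d$ built into $P_y$. Once the lemma is in place, induction closes the estimate and yields part~(a).

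Part~(b) follows by the same template using Theorem~\ref{thm: multi point estimate whole-plane} in place of Theorem~\ref{thm: multi point estimate radial}, but the integration is considerably cleaner because the whole-plane bound carries no boundary factor: the integrand reduces to $\prod_k\bigl((r\wedge l_k)/l_k\bigr)^{2-d}$. The analogous single-point lemma reads
$$\int_D r^{d-2}\left(\frac{r\wedge l}{l}\right)^{2-d} dA(z) = \int_{D\cap\{l\le r\}} r^{d-2}\, dA + \int_{D\cap\{l>r\}} l^{d-2}\, dA,$$
and both terms are uniformly bounded in $r$ and in the configuration of exclusion points: the first by $O(nr^d)$, and the second because $D$ is compact and $l^{d-2}$ is locally integrable near each of the at most $n$ exclusion points (as $d\in(0,2)$). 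Induction on $n$ then yields the conclusion.
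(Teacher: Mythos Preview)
Your overall strategy---Fatou, Fubini, apply the multipoint estimate, then integrate out one variable at a time---matches the paper exactly. The difference is that you identify the single-variable integral as ``the main obstacle'' requiring a case split on the relative positions of $l$, $r$, and $y$, whereas the paper bypasses this entirely with one line. By the second inequality in Lemma~\ref{lem: P_y bounds},
\[
\frac{P_{y}(r\wedge l)}{P_{y}(l)}\le\Bigl(\frac{r\wedge l}{l}\Bigr)^{2-d}\le\Bigl(\frac{r}{l}\Bigr)^{2-d},
\]
uniformly in $y$, so $r^{d-2}\,P_{y}(r\wedge l)/P_{y}(l)\le l^{d-2}$ pointwise. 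This gives an $r$-free integrand $\prod_k l_k^{d-2}$, and the remaining integral $\int_{\D^n}\prod_k l_k^{d-2}\,dA^n$ is handled by Fubini and the crude bound $\min_j|z_k-z_j|^{d-2}\le\sum_j|z_k-z_j|^{d-2}$, each summand integrating to a constant over $2\D$. No boundary case analysis is needed; the $P_y$ machinery disappears after one application of Lemma~\ref{lem: P_y bounds}. Your remark about restricting to an ordered region via label symmetry is also unnecessary: Theorem~\ref{thm: multi point estimate radial} already holds for any fixed labeling of the points, and the $l_k$ that appear are exactly those for that labeling.
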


The paper will be organized as follows. First we review preliminary information, which will be used in this paper. 
Next, we provide one-point estimates for radial SLE in the forms which will be useful for us. We then use these one-point estimates to prove some key lemmas, followed by the proofs of the main theorems.

\section{Preliminaries}

\subsection{General notation}
Throughout, we fix $\kappa\in(0,8)$.
A constant is a positive finite number that usually depends only on $\kappa$, and we often denote it by $C$. Sometimes it may also depend on some other parameter such as an integer $n$, in which case we use  $C_n$. We write $X\lesssim Y$ or $Y\gtrsim X$ if there is a  constant $C>0$ such that $X\le CY$. We write $X\asymp Y$ if $X\lesssim Y$ and $X\gtrsim Y$. 

 The function $P_y(x)$ used in Theorem \ref{thm: multi point estimate radial} is defined by
$$P_y(x)= \begin{cases}
y^{\alpha-(2-d)} x^{2-d}, & x \leq y\\
x^{\alpha}, & x \geq y
\end{cases},$$ where $d=1+\kappa/8$ is the Hausdorff dimension of the SLE$_\kappa$ path, and $\alpha=8/\kappa-1$ is related to the boundary exponent for SLE$_{\kappa}$. This upper bound mixes the estimates for interior points and points near the boundary. Roughly speaking, if the point $z_k$ is far from the boundary, the term on the right hand side of (\ref{eq: multipoint bound radial}) corresponding to $z_k$ is close to $(r_k/l_k)^{2-d}$. If $z_k$ is near the boundary of the unit disc, then the corresponding term on the right side of (\ref{eq: multipoint bound radial}) is close to $(r_k/l_k)^{\alpha}$. If $z_k$ is neither too close or too far away from the boundary, then the corresponding estimate is a mixture of the two.

The following Lemma about the functions $P_y$ is Lemma 2.1 in \cite{Rez Zhan lower bound}, and can be proven with a case by case argument.
\begin{lemma}\label{lem: P_y bounds}
For $0\leq x_1 <x_2,$ $0 \leq y_1 \leq y_2,$ $0<x,$ and $0\leq y$, we have
$$\frac{P_{y_1}(x_1)}{P_{y_1}(x_2)} \leq \frac{P_{y_2}(x_1)}{P_{y_2}(x_2)};$$
$$\(\frac{x_1}{x_2}\)^{\alpha} \leq \frac{P_{y}(x_1)}{P_y(x_2)} \leq \( \frac{x_1}{x_2} \)^{d-2} =\frac{P_{x_2}(x_1)}{P_{x_2}(x_2)};$$
$$\( \frac{y_1}{y_2} \)^{\alpha-(2-d)} \leq \frac{P_{y_1}(x)}{P_{y_2}(x)} \leq 1.$$
\end{lemma}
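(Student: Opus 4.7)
The plan is to verify the three inequalities by case analysis on how the arguments are ordered relative to the break points of the piecewise definition of $P_y$. The standing fact to record first is that $\alpha\ge 2-d$ for every $\kappa\in(0,8)$, since $\alpha-(2-d)=8/\kappa+\kappa/8-2\ge 0$ by AM--GM, with equality only at $\kappa=8$. On each branch, $P_y(x)$ is a monomial in $x$ with exponent either $2-d$ (when $x\le y$) or $\alpha$ (when $x\ge y$), and in the first branch the prefactor $y^{\alpha-(2-d)}$ is monotone nondecreasing in $y$ precisely because of this sign. These two monotonicities drive all three bounds.

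For the first inequality I would enumerate the six orderings of $\{x_1,x_2\}$ relative to $\{y_1,y_2\}$ compatible with $x_1<x_2$ and $y_1\le y_2$: namely $x_2\le y_1$; $x_1\le y_1\le x_2\le y_2$; $x_1\le y_1\le y_2\le x_2$; $y_1\le x_1\le x_2\le y_2$; $y_1\le x_1\le y_2\le x_2$; and $y_2\le x_1$. In each case $P_{y_i}(x_1)/P_{y_i}(x_2)$ collapses to an explicit expression (the two extreme cases yield the same value for $i=1$ and $i=2$), and the comparison between $i=1$ and $i=2$ reduces either to $\alpha\ge 2-d$, giving $(x_1/x_2)^{\alpha}\le (x_1/x_2)^{2-d}$, or to direct monotonicity in $y$ of a power with nonnegative exponent. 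For the second inequality, $P_y(x_1)/P_y(x_2)$ is either a pure power $(x_1/x_2)^{\beta}$ with $\beta\in\{\alpha,2-d\}$ or, when $x_1\le y\le x_2$, a mixed ratio $y^{\alpha-(2-d)}x_1^{2-d}/x_2^{\alpha}$; in every subcase this ratio is sandwiched between $(x_1/x_2)^{\alpha}$ and $(x_1/x_2)^{2-d}$, and the identification with $P_{x_2}(x_1)/P_{x_2}(x_2)$ follows by evaluating $P_{x_2}$ directly on both arguments. For the third inequality, the three cases $x\le y_1$, $y_1\le x\le y_2$, and $y_2\le x$ each produce a ratio that is an explicit power of $y_1/y_2$ or of $x/y_2$, easily seen to lie in $[(y_1/y_2)^{\alpha-(2-d)},1]$.

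The main obstacle is purely bookkeeping: there is no analytic input beyond the single inequality $\alpha\ge 2-d$ and the monotonicities it implies, but one has to be careful not to miss any of the orderings in the first bound. I would therefore organize the writeup around a small table for the six cases of (1), after which (2) and (3) become short consequences essentially read off from the formula.
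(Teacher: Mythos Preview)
Your proposal is correct and matches the paper's own indication: the paper does not give a proof but simply states that the lemma ``can be proven with a case by case argument'' and cites \cite{Rez Zhan lower bound}. Your six-case enumeration for the first inequality, three-case splits for the second and third, and the preliminary observation that $\alpha-(2-d)=8/\kappa+\kappa/8-2\ge 0$ are exactly what is needed; note also that the exponent $d-2$ in the displayed upper bound is a typo for $2-d$, as confirmed by the equality $P_{x_2}(x_1)/P_{x_2}(x_2)=(x_1/x_2)^{2-d}$, which you implicitly use.
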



\subsection{Radial SLE}
There are several varieties of the Loewner equation. We will focus on the radial Loewner equation, and will also need the covering radial Loewner equation. Complete details can be found in \cite{Law}.
Let $\D=\{ z \in \C: |z|<1\}$ be the unit disc. A $\D$-hull is a set $K \subset D$ which is relatively closed in $\D$, $0 \notin K$, and $\D \backslash K$ is simply connected. By the Riemann mapping theorem, there is a unique conformal map $g_K: \D \backslash K \to \D$ with $g_K(0)=0$ and $g_K'(0)>0$. The capacity of $K$ is defined by cap$(K)=\ln(g_K'(0))$. We will call a subset $D \subset \D$ a $\D$-domain if $D=\D \backslash K$ for a $\D$-hull $K$.

Given any real valued continuous function $\lambda\in C[0,\infty)$, the radial Loewner equation driven by $\lambda$ is, for all $z \in \D$,
\begin{equation}\label{eq: radial Loewner eq}
\partial_t g_t(z) = g_t(z) \frac{e^{i\lambda(t)} +g_t(z)}{e^{i\lambda(t)}-g_t(z)}, \text{ and } g_0(z)=z.
\end{equation}
If $\tau_z$ is the lifetime of (\ref{eq: radial Loewner eq}) at $z \in \D$ and $K_t=\{ z \in \D : \tau_z \leq t  \}$, then $K_t$ is a $\D$-hull with cap$(K_t)=t$ and $g_t$ is the conformal map $g_{K_t}$ associated with $K_t$.

For $\kappa>0$, the radial SLE$_{\kappa}$ process is the solution (\ref{eq: radial Loewner eq}) for $\lambda(t)=\sqrt{\kappa}B_t$, where $B_t$ is standard one dimensional Brownian motion. Similarly to the chordal case, there is a radial trace $\gamma: [0,\infty) \to \overline{\D}$ so that $\D \backslash K_t$ is the component of $\D \backslash \gamma[0,t]$ containing $0$ with $\gamma(0)=1$ and $\gamma(\infty)=0$. The radial SLE$_{\kappa}$ trace has the same phrase transitions and the same dimension as the chordal SLE$_{\kappa}$. 


The radial SLE described above is the standard radial SLE$_\kappa$ in $\D$ from $1$ to $0$.
Given any simply connected domain $D$,
a prime end (\cite{Ahlf}) $a$ of $D$, and interior point $b \in D$, SLE$_{\kappa}$ in $D$ from $a$ to $b$ is obtained by applying a conformal map $\phi: \D \to D$ with $\phi(1)=a$ and $\phi(0)=b$ to the standard radial SLE$_\kappa$ curve, so that the curve in $D$ grows from a boundary point to an interior point. 

If $\gamma$ is a radial SLE$_{\kappa}$ in a domain $D$ from $a$ to $b$, and $T$ is any  stopping time for $\gamma$ at which $\gamma$ does not reach $b$, the domain Markov property (DMP) says that, conditioned on $\gamma[0,T]$, $\gamma^T(t):=\gamma(T+t)$, $t\ge 0$, is a radial SLE$_{\kappa}$ path in a complement domain of $\gamma[0,T]$ in $D$ from $\gamma(T)$ to $b$.

\subsection{Radial SLE in the cylinder}
Let $\H=\{z\in\C:\Imm z>0\}$ be the upper half plane. It can be seen as a covering space for the unit disc $\D\sem\{0\}$ under the map $e^{2i}:z\mapsto e^{2i z}$. Let $\H^*$ be the cylinder defined by declaring that $z,w\in \H$ are equal if $z-w \in \pi \Z$. Then $e^{2i}$ induces a conformal map, still denoted by $e^{2i}$, from $\H^*$ onto $\D\sem\{0\}$. The boundary of $\H^*$ is $\R^*$, which is equal to $\R$ modulo the same equivalence relation.  By  defining $e^{2i}(\infty)=0$, we extend $e^{2i}$ to a conformal map from $\H^*\cup\{\infty\}$ onto $\D$. Thus, the image of a standard radial SLE$_\kappa$ under $(e^{2i})^{-1}$ is a radial SLE$_\kappa$ in $\H^*$ from $0$ to $\infty$.

For $z',w' \in \overline{\H}^*$ which can be represented as $z+\pi \Z, w+\pi \Z$ respectively for $z,w \in \overline{\H}$, the distance from $z'$ to $w'$ in $\overline{\H}^*$ is defined to be the Euclidean distance between the sets $z+\pi\Z$ and $w+\pi\Z$ in $\overline{\H}$. It will be written as $|z'-w'|_*$ to distinguish from the distance between points in $\C$. 
 Similarly, if $A',B'\subset \overline{\H}^*$ with $A'=A+\pi\Z$ and $B'=B+\pi\Z$, then the distance from $A'$ to $B'$ in $\H^*$ is the Euclidean distance from $A+\pi\Z$ to $B+\pi\Z$, and is denoted by $\dist_{\H^*}(A',B')$. Given any $z' \in \overline{\H}^*$ represented by $z+\pi \Z$, the ball of radius $r$ centered at $z'$ is denoted by $B_{\H^*}(z',r)$, and is represented in $\overline{\H}$ by $(B(z,r)\cap \H)+\pi\Z$. Note that for $r<\pi/2$, the representatives of $B_{\H^*}(z',r)\cap\H$ are nonoverlapping.

We call $K\subset \H^*$ an $\H^*$-hull if $e^{2i}(K)$ is a $\D$-hull. The complement of an $\H^*$-hull in $\H^*$ is called an $\H^*$-domain. Recall that for a simply connected domain $D\subsetneqq \C$ and an interior point $z\in D$, the conformal radius of $D$ seen from $z$ is  $\crad_D(z):=|\phi'(0)|$, if $\phi$ is a conformal map from $\D$ onto $D$ with $\phi(0)=z$.
By adding $\infty$ to $\H^*$ to make it simply connected, we may use the same spirit to define $\crad_{\H^*}(z')$ for any $z'\in \H^*$, and obtain $\crad_{\H^*}(z')=|\phi'(0)|$, where $\phi:=(e^{2i})^{-1}\circ \psi$ and $\psi$ is a M\"obius automorphism of $\D$ that sends $0$ to $e^{2i z'}$.  It is easy to calculate that
$\crad_{\H^*}(z')=\sinh(2\Imm z')$.
We may similarly define the conformal radius of an $\H^*$-domain. If $D$ is an $\H^*$-domain and $z'\in D$, then there is a conformal map $g$ from $D$ onto $\H^*$ such that $\Imm g(z)\to \infty$ as $\Imm z\to \infty$. It is easy to calculate that $\crad_D(z')=\sinh(2\Imm g(z'))/|g'(z')|$.

Koebe's $1/4$ theorem states that, for a simply connected domain $D\subsetneqq\C$, the conformal radius is comparable to the in-radius. More precisely, $\dist(z,\pa D)\le \crad_D(z)\le 4\dist(z,\pa D)$ for any $z\in D$. However, Koebe's $1/4$ theorem does not hold for $\H^*$ or $\H^*$-domains. In fact, $\dist(z',\pa \H^*)=\Imm z'$ is not comparable to $\crad_{\H^*}(z')=\sinh(2\Imm z')$. However, we may still apply Koebe's $1/4$ theorem to any simply connected subdomain of $\H^*$ (not containing $\infty$). See the lemma below.

\begin{lemma}
Let $D$ be a $\D$-domain, and let $H$ be an $\H^*$-domain with $e^{2i}(H)=D$. Let $z_0 \in D$ and $w_0'\in H$ so that $e^{2iw_0'}=z_0$. If $y_0=1-|z_0| \leq 1/2$, then $\dist(z_0,\partial D) \leq 4\dist_{\H^*}(w_0',\partial H)$. \label{lem: Koebe}
\end{lemma}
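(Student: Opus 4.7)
The plan is to transport a near-minimizing boundary point of $H$ to a boundary point of $D$ along a straight segment, using the conformal bijection $e^{2i}:H\cup\{\infty\}\to D$ and controlling the Euclidean distortion by the bound $|(e^{2i})'(z)|=2e^{-2\Imm z}\le 2$ valid on $\overline{\H}$. The hypothesis $y_0\le 1/2$ enters at exactly the right place: it forces $|z_0|\ge 1/2$, hence
\[
\Imm w_0 \;=\; -\tfrac12\log|z_0|\;\le\;\tfrac12\log 2\;<\;\pi/2,
\]
so that small cylindrical balls around $w_0'$ do not wrap, and $\dist_{\H^*}$ near $w_0'$ coincides with Euclidean distance in a single representative copy in $\H$.

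First I would verify $r:=\dist_{\H^*}(w_0',\pa H)\le \Imm w_0<\pi/2$: dropping a vertical segment from $w_0'$ toward $\R^*$ terminates on either $\pa K'\subset \pa H$ or $\pa \H^*\cap\overline H\subset\pa H$, so a boundary point of $H$ lies within Euclidean distance $\Imm w_0$. Given $\varepsilon>0$, I would pick $p'\in\pa H\cap \H^*$ with $|w_0'-p'|_*\le r+\varepsilon$; since $r+\varepsilon<\pi/2$, it admits a unique representative $p\in\overline{\H}$ with $|w_0-p|\le r+\varepsilon$, and the straight segment $[w_0,p]$ lies entirely in $\overline{\H}$. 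Integrating along this segment, the derivative bound gives
\[
|z_0-e^{2ip}|\;=\;\Bigl|\int_{[w_0,p]}2i\,e^{2iz}\,dz\Bigr|\;\le\;2|w_0-p|\;\le\;2(r+\varepsilon).
\]
Since $e^{2i}$ is a conformal bijection of $H\cup\{\infty\}$ onto $D$ sending any $p'\in\pa H\cap \H^*$ into $\pa D$ (its inverse is a bijection $D\to H\cup\{\infty\}$, so preimages of $D$-points are $H$-points and boundary corresponds to boundary), we have $e^{2ip}\in\pa D$, and thus $\dist(z_0,\pa D)\le 2(r+\varepsilon)$. Sending $\varepsilon\downarrow 0$ yields $\dist(z_0,\pa D)\le 2\,\dist_{\H^*}(w_0',\pa H)$, which in particular is bounded by $4\,\dist_{\H^*}(w_0',\pa H)$.

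The main technical subtlety is purely the cylindrical geometry: ensuring that the cylindrical distance from $w_0'$ to $\pa H$ is realized (up to $\varepsilon$) by a straight segment in $\H$. Without the hypothesis $y_0\le 1/2$ one could have $\Imm w_0\ge \pi/2$, in which case the near-minimizer in $\pa H$ might be reachable only by a path that wraps around the cylinder, and the $\overline{\H}$-derivative bound---applied along a straight $\H$-segment---would no longer suffice. Note the factor $4$ in the statement is comfortably loose; the argument as sketched actually yields the factor $2$.
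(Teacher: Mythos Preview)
Your argument is correct and takes a genuinely different route from the paper. The paper works in the opposite direction: it sets $r=\dist(z_0,\partial D)$, observes that $r\le y_0\le 1/2$ allows $(e^{2i})^{-1}$ restricted to $B(z_0,r)\subset D$ to be lifted to a univalent map $\psi$ into $\H$, and then applies Koebe's $1/4$ theorem to obtain a ball $B(w_0,r/4)\subset\psi(B(z_0,r))$, whose image in $\H^*$ lies in $H$; hence $\dist_{\H^*}(w_0',\partial H)\ge r/4$. You instead exploit that the covering map is explicit: the Lipschitz bound $|(e^{2i})'|\le 2$ on $\overline\H$, integrated along a straight segment from $w_0$ to a near-minimizer $p$ in $\partial H$, yields the constant $2$ directly. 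The paper's Koebe argument is more conceptual and would transfer to covering situations without an explicit derivative formula; your argument is more elementary, avoids invoking a nontrivial theorem, and gives the sharper constant.

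One minor slip: you select $p'\in\partial H\cap\H^*$, but the near-minimizer may lie on $\R^*$ (for instance if the hull $K'$ is far from $w_0'$). This is harmless---if $p'\in\R^*$ then $e^{2ip'}\in\partial\D$, which is disjoint from $D\subset\D$, and since $z_0$ is interior to $D$ one still has $\dist(z_0,\partial D)=\dist(z_0,\C\setminus D)\le|z_0-e^{2ip'}|$. Alternatively, in that case $\dist_{\H^*}(w_0',\partial H)=\Imm w_0=-\tfrac12\log(1-y_0)\ge y_0/2\ge\tfrac12\dist(z_0,\partial D)$ directly.
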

\begin{proof}
Let $\phi=(e^{2i})^{-1}:D\to H$. Let $r=\dist(z_0,\partial D)$. Then $r\le y_0$. The assumption $y_0 \leq 1/2$ implies that $\phi$ restricted to $B(z_0,r)$ may be lifted to a conformal map $\psi$ from $D$ into $\H$. Let $w_0=\psi(z_0)\in\H$. Then $w_0'$ is represented by $w_0+\pi\Z$.  By Koebe's $1/4$ theorem, $B(w_0, r/4) \subset \psi(B(z_0,r))$. So $B_{\H^*}(w_0',r/4)\subset \phi(B(z_0,r))\subset H$, and the conclusion easily follows.
\end{proof}

\subsection{Crosscuts and prime ends}
In later sections, we will be studying the behavior of the radial SLE curve as it crosses many interior curves, creating different components of the initial domain $\D$. We need to introduce some notation which will make it easier to distinguish which component is discussed at any point in time. This is the same framework introduced in \cite{Rez Zhan upper bound}.

Recall that a crosscut in a domain $D \subset \C$ is a simple curve $\rho:(a,b) \to D$ such that $\lim_{t \to a^+}\rho(t):=\rho(a^+)$ and $\lim_{t \to b^-}\rho(t):=\rho(b^-)$ both exist and are elements of the boundary of $D$. Then $\rho$ lies inside of $D$, but the endpoints do not. The endpoints $\rho(a^+)$ and $\rho(b^-)$ determine prime ends for the domain. If $D$ is simply connected, and $f$ maps $D$ conformally onto a Jordan domain $D'$, then $f(\rho)$ is a crosscut in $D'$. More information about crosscuts and prime ends can be found in \cite{Ahlf}.

Note that if $\rho$ is a crosscut in a simply connected domain $D$, then $\rho$ divides $D$ into two components. Even more generally, let $K \subset D$ be relatively closed. Let $S$ be either a connected subset of $D \backslash K$ or a prime end of $D\backslash K$. We then define $D(K;S)$ to be the component of $D\backslash K$ which contains $S$. We also introduce the symbol $D^*(K;S)= D \backslash \( K \cup D(K;S) \)$, which is the union of the remaining components of $D \backslash K$. This notation is useful for expressing whether $K$ separates points. For example, if $\rho \subset D$ is a crosscut which separates two points $z, w \in D$, then $D(\rho; z) \neq D(\rho;w)$. In fact, in this case, we have $D(\rho; w) = D^*(\rho;z)$, and $D(\rho;z) = D^*(\rho;w)$.

Since we will be working with domains which have $0$ as an interior point, and in particular will be concerned with components containing $0$, we will use $D(K)$ and $D^*(K)$ to represent $D(K;0)$ and $D^*(K;0)$ respectively. Note that this is a departure from the notation in \cite{Rez Zhan upper bound}, where the point being suppressed was the prime end $\infty$. The change is to reflect the fact that the target of the radial SLE curve is the interior point $0$.


The next lemma is \cite[Lemma 2.1]{Rez Zhan upper bound}:
\begin{lemma}\label{lem: first subcrosscut}
Let $D \subset \til{D}$ be simply connected domains in $\C$. Let $\rho$ either be a Jordan curve in $\til{D}$ which intersects $\partial D$ or a crosscut in $\til{D}$. Let $Z_1,Z_2$ be two connected subsets or prime ends of $\til{D}$ such that $\til{D}(\rho;Z_j)$ is well defined for both $j=1,2$, and are nonequal. This means that $\til{D}\backslash \rho$ is a neighborhood of both $Z_1$ and $Z_2$ in $\til D$, and $Z_1$ is disconnected from $Z_2$ in $\til{D}$ by $\rho$.

Suppose that $D$ is a neighborhood of $Z_1$ and $Z_2$ in $\til{D}$. Let $\Lambda$ be the set of connected components of $D \cap \rho$. Then there exists a unique $\lambda_1 \in \Lambda$ such that $D(\lambda_1;Z_1) \neq D(\lambda_1;Z_2)$, and if $\lambda \in \Lambda$ such that $D(\lambda;Z_1)\neq D(\lambda;Z_2)$, then $D(\lambda_1; Z_1) \subset D(\lambda; Z_1)$ and $D(\lambda; Z_2) \subset D(\lambda_1; Z_2) $.
\end{lemma}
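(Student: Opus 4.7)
The strategy is to encode the combinatorics of the components in $\Lambda$ as a dual graph, show that this graph is a tree, and then take $\lambda_1$ to be the first edge along the unique tree path from the component of $D\sem\rho$ containing $Z_1$ to the one containing $Z_2$.

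First I would verify that each $\lambda\in\Lambda$, being a maximal open subarc of $\rho$ contained in $D$, is itself a crosscut of $D$: each of its two endpoints is either an interior point of $\rho$ (forced into $\pa D$ by the maximality of the subarc) or an endpoint of $\rho$, which lies in $\pa\til D$ and hence in $\pa D$ as a limit of points in $D$. Since $D$ is simply connected, each such $\lambda$ divides $D$ into exactly two components, so $D(\lambda;Z_1)$ and $D(\lambda;Z_2)$ are well defined, and $D(\lambda;Z_1)\ne D(\lambda;Z_2)$ says precisely that $\lambda$ separates $Z_1$ from $Z_2$ in $D$. Let $V_j$ denote the (unique) component of $D\sem\rho$ containing $Z_j$; the hypothesis $\til D(\rho;Z_1)\ne\til D(\rho;Z_2)$, combined with $V_j\subset\til D(\rho;Z_j)$, forces $V_1\ne V_2$.

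Next I would form the dual graph $G$ whose vertices are the components of $D\sem\rho$ and whose edges are the elements of $\Lambda$, each $\lambda$ joining the two components on either side of it. The crucial step is to show that $G$ is a tree. For this, fix any reference $\lambda_0\in\Lambda$ and observe that every other $\lambda\in\Lambda$ is connected and disjoint from $\lambda_0$, hence lies entirely in one of the two components $A,B$ of $D\sem\lambda_0$; the same is then true of each component of $D\sem\rho$. A purported simple cycle in $G$ using $\lambda_0$ as one of its edges must start in, say, $A$, cross $\lambda_0$ exactly once into $B$, and then be confined to $B$ by the remaining edges, so it can never return to its starting vertex. The same argument forbids a double edge between two vertices.

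Granted that $G$ is a tree, write the unique path from $V_1$ to $V_2$ as $V_1=U_0,\lambda^{(1)},U_1,\dots,\lambda^{(m)},U_m=V_2$. An arbitrary $\lambda\in\Lambda$ separates $Z_1$ from $Z_2$ in $D$ iff removing it disconnects $V_1$ from $V_2$ in $G$, iff $\lambda$ appears on this path. Removing $\lambda^{(i)}$ places $V_1$ into the subtree containing $U_0,\dots,U_{i-1}$, and $D(\lambda^{(i)};Z_1)$ is the union of those vertices with the edges lying in that subtree; as a subset of $D$, this strictly grows with $i$. Taking $\lambda_1:=\lambda^{(1)}$ therefore yields the smallest $D(\lambda_1;Z_1)$ and the largest $D(\lambda_1;Z_2)$ among all separators, giving both inclusions simultaneously, and uniqueness of $\lambda_1$ follows from these extremal properties. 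The only genuinely nontrivial step in this plan is the tree property of $G$; once that is in hand, the rest is straightforward bookkeeping along the tree.
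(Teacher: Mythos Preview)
The paper does not give its own proof of this lemma; it is quoted verbatim as \cite[Lemma 2.1]{Rez Zhan upper bound}. So there is nothing in-paper to compare against, and your dual-graph approach is a natural and essentially correct way to organize the argument.

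There is one genuine gap. Your cycle argument shows that every edge of $G$ is a bridge, hence that $G$ is a \emph{forest}; it does not show that $G$ is connected, and you use connectedness when you ``write the unique path from $V_1$ to $V_2$''. When $\Lambda$ is infinite (which can happen: $D\cap\rho$ is just an open subset of the arc $\rho$) this is not automatic. The fix is a compactness step you have omitted: choose any continuous path $\gamma:[0,1]\to D$ from a point of $V_1$ to a point of $V_2$. Since $\rho$ is closed in $\til D$, the set $\gamma^{-1}(\rho)$ is compact, and the sets $\gamma^{-1}(\lambda)$, $\lambda\in\Lambda$, form a relatively open cover of it; hence $\gamma$ meets only finitely many elements $\mu_1,\dots,\mu_k$ of $\Lambda$. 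Every separating $\lambda$ must meet $\gamma$, so the set of separators is finite. Equivalently, the finitely many disjoint crosscuts $\mu_1,\dots,\mu_k$ cut the simply connected domain $D$ into exactly $k{+}1$ pieces, and the restricted dual graph on these pieces has $k{+}1$ vertices, $k$ edges, and no cycles, hence is an honest finite tree containing both $V_1$ and $V_2$. Run the rest of your argument in that finite tree.

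Once this is in place, your identification of $\lambda_1$ as the first edge on the tree path and the monotonicity of $D(\lambda^{(i)};Z_1)$ in $i$ are correct, and uniqueness follows exactly as you say.
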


The $\lambda_1$ obtained in Lemma \ref{lem: first subcrosscut} will be referred to as the first subcrosscut of $\rho$ to disconnect (or separate) $Z_1$ and $Z_2$ in $D$. The conclusion of the lemma states that of all subcrosscuts of $\rho$ in $D$ which disconnect $Z_1$ and $Z_2$, $\lambda_1$ is closest to $Z_1$ in the sense that the component containing $Z_1$ it determines is contained in the component determined by any other such subcrosscut.

\subsection{Extremal length and distortion theorem}
Let $d_{\Omega}(A,B)$ denote the extremal distance from $A$ to $B$ in $\Omega$. For the definition of extremal distance, see \cite{Ahlf}. Note that this is distinct from the notations $\dist(A,B)$ and $\dist_{\H^*}(A,B)$, both of which represent Euclidean distance. Define $\Lambda(R)=d_{\Omega_R}( [-1,0], [R,\infty) )$, where $\Omega_R= \C \backslash\{[-1,0] \cup [R,\infty)\}$. By Teichm\"uller's theorem (\cite{Ahlf}), this is maximal among doubly connected domains in modulus (extremal distance between boundary components)  which separate $\{-1,0\}$ and $\{w, \infty\}$ with $|w|=R$. Moreover, $\Lambda(R) \leq \frac{1}{2\pi} \ln( 16(R+1))$ for all values of $R \geq 1$.

Combining Teichm\"uller's theorem with the reflection principle of the extremal length (about $\R$), one easily obtains the following lemma.

\begin{lemma}\label{lem: extremal distance hull}
Let $\eta$ be a crosscut in $\{z \in \H: \Ree(z)>0\}$ with endpoints $0< a<b$. Define $r=\sup\{| z-a | : z \in \eta\}$. Then
$$\min\{1,r/a\}\lesssim e^{-\pi d_{\H}(\eta, (-\infty,0])}.$$
\end{lemma}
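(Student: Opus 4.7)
The plan is to combine a reflection of $\eta$ across $\R$ with Teichm\"uller's theorem, as the remark before the lemma suggests. Let $\eta^*\subset\lin\H$ denote the reflection of $\eta$ in the real axis. Since the endpoints $a,b$ of $\eta$ are real, $\eta\cup\eta^*$ is a Jordan curve in $\C$, and the target set $F:=(-\infty,0]$ is itself reflection-symmetric and lies entirely in the outside of $\eta\cup\eta^*$ (since the curve sits in $\{\Ree z>0\}$). Let $\Omega$ be the outside of the Jordan curve in $\ha\C$ with the arc $F$ removed. Viewed in $\ha\C$, $F$ is an arc whose endpoints $0$ and $\infty$ are interior points of the outside, so removing $F$ turns a topological disc into a doubly connected domain; hence $\Omega$ is doubly connected.

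\textbf{Step 1: Reflection identity.} Any curve in $\H$ from $\eta$ to $F$ can be truncated at its last visit to $\eta$, producing a curve that lies in the unbounded component $U'$ of $\H\sem\eta$, so $d_\H(\eta,F)=d_{U'}(\eta,F)$. Now $\Omega = U' \cup (U')^* \cup\bigl((0,a)\cup(b,\infty)\bigr)$ (glued along the real-axis bridges), and both boundary pieces $\eta\cup\eta^*$ and $F$ of $\Omega$ are reflection-symmetric. The standard reflection principle for extremal length then gives
\begin{equation*}
d_\H(\eta,F) = d_{U'}(\eta,F) = 2\,\mm(\Omega),
\end{equation*}
where $\mm(\Omega)$ denotes the modulus of $\Omega$.

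\textbf{Step 2: Teichm\"uller via a M\"obius.} By compactness of $\lin\eta$, pick $p\in\lin\eta$ with $|p-a|=r$, and consider the M\"obius transformation $\psi(z)=z/(p-z)$. One checks directly that $\psi(0)=0$, $\psi(\infty)=-1$, $\psi(p)=\infty$, and $\psi(a)=a/(p-a)=:w$, so $|w|=a/r$. Hence $\psi(\Omega)$ is a doubly connected domain with $\{-1,0\}\subset\psi(F)$ on one boundary component and $\{w,\infty\}\subset\psi(\eta\cup\eta^*)$ on the other. By Teichm\"uller's theorem and the conformal invariance of the modulus,
\begin{equation*}
\mm(\Omega) = \mm(\psi(\Omega)) \le \Lambda(a/r) \le \tfrac{1}{2\pi}\log\!\bigl(16(a/r+1)\bigr).
\end{equation*}

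\textbf{Conclusion and main obstacle.} Combining the two steps yields $d_\H(\eta,F)\le \tfrac{1}{\pi}\log(16(a/r+1))$, and exponentiating gives $e^{-\pi d_\H(\eta,F)}\ge 1/(16(a/r+1))$. A short case check ($r\le a$ gives right-hand side $\gtrsim r/a$; $r>a$ gives $\gtrsim 1$) shows this is $\gtrsim\min\{1,r/a\}$, as desired. The main obstacle is Step~1: one must identify the correct doubly connected component $\Omega$ of $\ha\C\sem(\eta\cup\eta^*\cup F)$ and keep the factor $2$ in the reflection principle straight. Once that is in place the Teichm\"uller step is clean, because $\psi$ was chosen precisely so that the Teichm\"uller parameter $|w|$ comes out as exactly $a/r$ without any auxiliary estimates on $p$.
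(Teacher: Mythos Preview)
Your proof is correct and follows essentially the same route as the paper's argument: reflect $\eta$ across $\R$, use the reflection principle to relate $d_{\H}(\eta,(-\infty,0])$ to the modulus of the resulting ring domain, and bound that modulus by Teichm\"uller's theorem. The only difference is cosmetic: the paper normalizes with the affine map $z\mapsto (z-a)e^{-i\arg(p-a)}/r-1$ (sending $a\mapsto -1$, $p\mapsto 0$, and obtaining Teichm\"uller parameter $|{-ae^{-i\arg(p-a)}/r-1}|\le a/r+1$ via the triangle inequality), whereas your M\"obius map $z\mapsto z/(p-z)$ lands the parameter at exactly $a/r$, which is marginally cleaner.
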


The following application of Koebe's distortion theorem (\cite{Ahlf}) will be used repeatedly in the next section to show that interior estimates are comparable after applying conformal maps. 

\begin{lemma}\label{lem: Growth thm est}
Let $\Omega \subsetneqq \C$ be a domain, $z_0\in D$, and  $M=\dist(z_0,\partial \Omega)$. Suppose that $\phi$ is a conformal map defined on $D$, $R\le M$,  and $0<r <R/7$. Define $$\til{R}=\frac{R |\phi'(z_0)|}{(1+R/M)^2} , \quad \til{r}=\frac{r |\phi'(z_0)|}{(1-r/M)^2}.$$
Then $\frac{r}{R} < \frac{\til{r}}{\til{R}} <7 \frac{r}{R}<1$, and
\begin{equation}
\phi(B(z_0,r)) \subset B(\phi(z_0),\til{r}) \subset B(\phi(z_0), \til{R}) \subset \phi(B(z_0,R)), \label{eq: inclusion}
\end{equation}
\end{lemma}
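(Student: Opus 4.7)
The plan is to reduce to Koebe's distortion/growth theorem by normalizing $\phi$ on the ball $B(z_0,M)$, and then read off the two inclusions from the resulting two-sided estimate on $|\phi(z)-\phi(z_0)|$.

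First I would set $\psi(w)=(\phi(z_0+Mw)-\phi(z_0))/(M\phi'(z_0))$ on the unit disc $\D$. Since $B(z_0,M)\subset\Omega$ and $\phi$ is conformal there, $\psi$ is a schlicht function, i.e.\ univalent on $\D$ with $\psi(0)=0$ and $\psi'(0)=1$. Koebe's distortion/growth theorem then gives, for every $w\in\D$,
$$\frac{|w|}{(1+|w|)^2}\le|\psi(w)|\le\frac{|w|}{(1-|w|)^2},$$
and both bounds are monotonically increasing in $|w|$ on $[0,1)$. Translating back via $z=z_0+Mw$, this yields, for every $z\in B(z_0,M)$,
$$\frac{|z-z_0|\,|\phi'(z_0)|}{(1+|z-z_0|/M)^2}\le|\phi(z)-\phi(z_0)|\le\frac{|z-z_0|\,|\phi'(z_0)|}{(1-|z-z_0|/M)^2}.$$

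From this I read off the two inclusions. For the inner one, any $z\in B(z_0,r)$ satisfies $|z-z_0|<r<M$, so the upper estimate gives $|\phi(z)-\phi(z_0)|<\til r$, proving $\phi(B(z_0,r))\subset B(\phi(z_0),\til r)$. For the outer inclusion I argue by connectedness: assume first $R<M$, so $\phi$ is continuous on $\overline{B(z_0,R)}$ and $\phi(B(z_0,R))$ is an open connected set containing $\phi(z_0)$ whose topological boundary is contained in $\phi(\partial B(z_0,R))$. The lower estimate applied on $\partial B(z_0,R)$ shows that every boundary point lies at distance at least $\til R$ from $\phi(z_0)$, so $B(\phi(z_0),\til R)$ does not meet $\partial\phi(B(z_0,R))$; being connected and containing the interior point $\phi(z_0)$, it must be entirely inside the image. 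The boundary case $R=M$ follows by taking $R_n\uparrow M$ and passing to the limit. The middle inclusion $B(\phi(z_0),\til r)\subset B(\phi(z_0),\til R)$ is just the inequality $\til r<\til R$, which is part of the quantitative claim below.

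Finally I check the ratio inequality. A direct computation gives
$$\frac{\til r}{\til R}=\frac{r}{R}\cdot\frac{(1+R/M)^2}{(1-r/M)^2}.$$
The factor on the right is strictly bigger than $1$ since $r/M\le 1$, giving $\til r/\til R>r/R$. For the upper bound, $R\le M$ and $r<R/7\le M/7$ force $(1+R/M)^2\le 4$ and $(1-r/M)^2>(6/7)^2=36/49$, whence the quotient of squares is at most $49\cdot 4/36=49/9<7$. Combined with $r/R<1/7$, this gives $\til r/\til R<7r/R<1$. No step here is a real obstacle; the only subtle point is the connectedness/limiting argument used to push the boundary estimate into a genuine inclusion of a ball inside $\phi(B(z_0,R))$, which is why I would be careful to separate the cases $R<M$ and $R=M$.
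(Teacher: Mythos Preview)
Your proof is correct and follows essentially the same route as the paper: both normalize via $\psi(w)=(\phi(z_0+Mw)-\phi(z_0))/(M\phi'(z_0))$ and read the inclusions off the Koebe growth bounds, with the ratio inequality handled by the same elementary estimates on $(1+R/M)^2/(1-r/M)^2$. You are simply more careful than the paper in justifying the outer inclusion $B(\phi(z_0),\til R)\subset\phi(B(z_0,R))$ via a connectedness/boundary argument and in separating the case $R=M$; the paper just asserts this step.
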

\begin{proof}
First, the inequalities $\frac{r}{R} < \frac{\til{r}}{\til{R}} <7 \frac{r}{R}<1$ follow easily from $R\le M$ and $0<r<R/7$.
Applying Koebe's distortion theorem to the univalent map $f: \D \to \C$ defined by $$f(w)=\frac{\phi(z_0+Mw)-\phi(z_0)}{M\phi'(z_0)},$$
we get that if $|z-z_0|= \rho \in (0,M)$, then
\begin{equation}\label{eq: Growth thm}
\frac{\rho {|\phi'(z_0)|}}{(1+\rho/M)^2} \leq  {|\phi(z)-\phi(z_0)|} \leq \frac{\rho{|\phi'(z_0)|}}{(1-\rho/M)^2}.
\end{equation}
Since $\til{r}$ is the righthand side for $\rho=r$, and $\til{R}$ is  the lefthand side for $\rho=R$, we get (\ref{eq: inclusion}).
\end{proof}

\section{Interior and boundary estimates}
We recall some estimates for radial SLE from \cite{AKL} and further develop them. 
The boundary estimate describes how difficult can a radial SLE$_\kappa$ curve gets close to a marked boundary point. The following is \cite[Lemma 5.1]{AKL}, which was originally proved in \cite[Proposition 4.3]{Lawler continuity of tip} with a different expression.

\begin{lemma}\label{lem: boundary est 0}[Boundary estimate for $\H^*$] If $\gamma'$ is a radial SLE$_{\kappa}$ curve in $\H^*$ from $0$ to $\infty$, then for any $x' \in \R^*\sem\{0\}$ and $r>0$,
	$$\P[ \dist_{\H^*}(x',\gamma')  \leq r] \lesssim \( \frac{r}{|x'|_*} \)^{\alpha}.$$
\end{lemma}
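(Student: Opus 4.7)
My approach is the classical one-point boundary martingale technique for radial SLE, adapted to the cylinder $\H^*$. First I would reduce to the nontrivial regime $r \le c_0 |x'|_*$ for a small absolute constant $c_0 > 0$, since the inequality is trivial when $r \gtrsim |x'|_*$. By $\pi$-periodicity of $\H^*$, I lift $x'$ to $x \in \R$ with $|x| = |x'|_*$ (assuming $|x'|_* \le \pi/2$; the other case only involves constant adjustments since $\sin$ is bounded). Let $h_t$ denote the covering radial Loewner flow driven by $\lambda_t = \sqrt{\kappa} B_t$, and set $X_t = h_t(x) - \lambda_t$, so that $dX_t = \cot(X_t/2)\, dt - \sqrt{\kappa}\, dB_t$.

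The core of the argument is to construct a positive local martingale $M_t$ with $M_0 \asymp |x'|_*^{-\alpha}$ and $M_{\tau_r} \gtrsim r^{-\alpha}$ on the event $\{\tau_r < \infty\}$, where $\tau_r = \inf\{t: \dist_{\H^*}(x', \gamma'[0,t]) \le r\}$. The natural candidate is $M_t = e^{ct}\, h_t'(x)^\alpha \sin(X_t/2)^{-\alpha}$, which by Ito's formula (using $\partial_t h_t'(x) = -h_t'(x)/(2\sin^2(X_t/2))$) is a positive local martingale precisely when $c = (8-\kappa)(4-\kappa)/(8\kappa)$; the power $\alpha$ on $h_t'(x)$ is chosen so that the leading $\sin^{-\alpha-2}$ drift term cancels. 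A Koebe distortion argument (cf.\ Lemma \ref{lem: Koebe}) gives that on $\{\tau_r < \infty\}$, $|X_{\tau_r}| \asymp r\cdot h_{\tau_r}'(x)$, so that $M_{\tau_r} \asymp e^{c\tau_r}\, r^{-\alpha}$ with the $h_{\tau_r}'(x)$ factors canceling exactly. By optional stopping and Fatou for the positive supermartingale $M_t$, we obtain $\P[\tau_r < \infty]\cdot \inf_{\{\tau_r<\infty\}} M_{\tau_r} \le \E[M_{\tau_r};\tau_r<\infty] \le M_0$, yielding $\P[\tau_r < \infty] \lesssim (r/|x'|_*)^\alpha \cdot \sup e^{-c\tau_r}$.

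The main obstacle is that $c \ge 0$ only when $\kappa \le 4$, in which case the bound follows immediately since $e^{-c\tau_r} \le 1$. For $\kappa \in (4,8)$ we have $c < 0$ and the factor $e^{-c\tau_r}$ is unbounded in $\tau_r$, so the naive argument fails. I would address this either by (a) refining the martingale with an additional bounded factor of the form $\cos(X_t/2)^q$ and choosing $q$ so that the exponential drift correction becomes non-negative throughout $(0,8)$, or (b) appealing to local absolute continuity between radial SLE in $\H^*$ and chordal SLE in $\H$ over fixed capacity windows $[0,T]$ with bounded Radon-Nikodym derivative, invoking the classical chordal SLE boundary estimate (which gives exponent $\alpha = 8/\kappa-1$ for all $\kappa \in (0,8)$), and handling the tail event $\{\tau_r > T\}$ via the domain Markov property, noting that after sufficient capacity time the hull separates $x'$ from $\infty$ with good probability.
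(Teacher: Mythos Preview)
The paper does not prove this lemma; it quotes it from \cite{AKL} and \cite{Lawler continuity of tip}. Your martingale $M_t = e^{ct}\, h_t'(x)^\alpha \sin(X_t/2)^{-\alpha}$ with $c = \tfrac{(8-\kappa)(4-\kappa)}{8\kappa}$ is indeed the right object, and for $\kappa \le 4$ the supermartingale scheme is the standard route to the bound. One technical point deserves care: the step ``$|X_{\tau_r}| \asymp r\, h_{\tau_r}'(x)$ by Koebe distortion'' is only half-justified. The reflected extension of $h_{\tau_r}$ is conformal on $B(x,r)$ and on no larger disc, so Koebe's $1/4$ theorem yields only the lower bound $|X_{\tau_r}| \gtrsim r\, h_{\tau_r}'(x)$, which translates into an \emph{upper} bound on $M_{\tau_r}$---the wrong direction for optional stopping. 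The companion inequality $|X_{\tau_r}| \lesssim r\, h_{\tau_r}'(x)$ (equivalently $M_{\tau_r}\gtrsim e^{c\tau_r} r^{-\alpha}$) is true but requires a separate harmonic-measure or Beurling-type argument, not Koebe.

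For $\kappa \in (4,8)$ neither proposed fix works as written. In (a), adjoining a factor $\cos(X_t/2)^q$ adds a drift term $\tfrac{\kappa}{8}q(q-1)\tan^2(X_t/2)$ to $dM_t/M_t$, so you recover a local martingale only for $q \in \{0,1\}$; with $q=1$ the new exponent $c$ is indeed nonnegative on $(4,8)$, but the process now vanishes whenever $X_t = \pi$, which the diffusion visits, so positivity and the optional-stopping lower bound both fail. In (b), local absolute continuity with chordal SLE gives only $\P[\tau_r \le T] \le C_T (r/|x'|_*)^\alpha$ with $C_T \to \infty$; the DMP tail suggestion does not, as stated, control $\P[T < \tau_r < \infty]$ by a constant multiple of $(r/|x'|_*)^\alpha$ uniformly in $T$. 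The argument in \cite{Lawler continuity of tip} handles $\kappa>4$ through finer analysis of the radial diffusion on $(0,2\pi)$, and you would need to import that machinery.
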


We want to modify the boundary estimate into a more general and conformally invariant version which can be applied in more general domains. In the next lemma, we will derive an estimate involving the extremal distance between two crosscuts in the cylinder $\H^*$. The extremal distance will be determined by the domain between the crosscuts, and will be the same as the extremal distance between a representation of each of them in $\H$.

\begin{lemma}\label{lem: boundary 2}[Boundary estimate, extremal distance version]
	Let  $\gamma$ be radial SLE$_{\kappa}$ curve in a simply connected domain $D$ from a prime end $w_0$ to an interior point $z_0$. Let $\rho, \eta$ be a pair of disjoint crosscuts in $D$ 
	such that $D(\rho;\eta)$ is not a neighborhood of either $z_0$ or $w_0$. Here $w_0$ may be a prime end of $D^*(\rho;\eta)$, or determined by an endpoint of $\rho$.
	Then
	\begin{equation}\label{eq: general boundary}
	\P[ \gamma \cap \eta \neq \emptyset] \lesssim  e^{-\alpha \pi d_D(\rho,\eta)}.
	\end{equation}
\end{lemma}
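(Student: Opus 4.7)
The plan is to conformally transfer the problem to the standard cylinder $\H^*$, apply the domain Markov property at the first time the curve touches $\rho$, and then invoke the one-point boundary estimate Lemma \ref{lem: boundary est 0} at a boundary endpoint of the image of $\eta$ in the new coordinates. The resulting Euclidean one-point bound is converted back into an extremal-length bound via Lemma \ref{lem: extremal distance hull}. Conformal invariance and monotonicity of extremal distance let me carry $d_D(\rho,\eta)$ through each coordinate change.

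First I would fix a conformal map $\phi:D\to \H^*$ sending $w_0\mapsto 0$ and $z_0\mapsto \infty$, and write $\gamma'=\phi\circ\gamma$, $\rho'=\phi(\rho)$, $\eta'=\phi(\eta)$, so that $d_{\H^*}(\rho',\eta')=d_D(\rho,\eta)$ and $\gamma'$ is a radial SLE$_\kappa$ in $\H^*$ from $0$ to $\infty$. If $w_0$ is a prime end of $D^*(\rho;\eta)$, let $\tau$ be the first time $\gamma'$ hits $\rho'$ (finite on the event of interest, since $\rho'$ separates $\eta'$ from $0$). By the DMP, conditional on $\gamma'[0,\tau]$ the continuation is a radial SLE$_\kappa$ in the component $D'$ of $\H^*\sem\gamma'[0,\tau]$ containing $\infty$, running from $\gamma'(\tau)$ to $\infty$. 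Assuming $\eta'\subset D'$ (else we are done), compose with a second conformal map $\psi:D'\to \H^*$ with $\psi(\gamma'(\tau))=0$ and $\psi(\infty)=\infty$, and write $\hat\gamma=\psi\circ\gamma'(\tau+\cdot)$, $\hat\rho=\psi(\rho'\cap D')$, $\hat\eta=\psi(\eta')$. Since $\gamma'(\tau)\in \overline{\rho'\cap D'}$, we have $0\in\overline{\hat\rho}$, and by monotonicity of extremal distance under restricting the ambient domain together with conformal invariance,
\[
d_{\H^*}(\hat\rho,\hat\eta)\;\ge\; d_{\H^*}(\rho',\eta')\;=\;d_D(\rho,\eta).
\]
If instead $w_0$ is itself an endpoint of $\rho$, the stopping step is skipped and one takes $\hat\gamma=\gamma'$, $\hat\rho=\rho'$, $\hat\eta=\eta'$; one endpoint of $\rho'$ is already $0$, so the same conclusion holds.

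Next I would bound $\P[\hat\gamma\cap \hat\eta\neq\emptyset]$ on the cylinder. The target bound is trivial when $d_D(\rho,\eta)=O(1)$, so I reduce to the case where $\hat\eta$ has diameter smaller than $\pi/2$ and lifts to a genuine crosscut $\til\eta\subset \H$ with both endpoints on $\R$; after a rigid motion of the cylinder I place these endpoints at $0<a<b$. Setting $r=\sup\{|z-a|:z\in\til\eta\}$ gives $\hat\eta\subset B_{\H^*}(a,r)$, so Lemma \ref{lem: boundary est 0} yields
\[
\P[\hat\gamma\cap \hat\eta\neq \emptyset]\;\le\; \P[\dist_{\H^*}(\hat\gamma,a)\le r]\;\lesssim\;(r/a)^{\alpha}.
\]
Because $0\in\overline{\hat\rho}$, extending $\hat\rho$ along the boundary ray $(-\infty,0]$ (whose Euclidean distance from $\til\eta$ is comparable to $a$) lowers its extremal distance to $\hat\eta$ by only an $O(1)$ additive constant, so $d_\H(\til\eta,(-\infty,0])\le d_{\H^*}(\hat\rho,\hat\eta)+C$. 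Lemma \ref{lem: extremal distance hull} then gives $\min\{1,r/a\}\lesssim e^{-\pi d_\H(\til\eta,(-\infty,0])}$, and combining with the displayed probability bound and the inequality $d_{\H^*}(\hat\rho,\hat\eta)\ge d_D(\rho,\eta)$ proves (\ref{eq: general boundary}).

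The main obstacle is the last step: reconciling extremal distance on the cylinder $\H^*$ with extremal distance in its planar lift $\H$ so that Lemma \ref{lem: extremal distance hull} really controls $d_{\H^*}(\hat\rho,\hat\eta)$. One must rule out configurations where $\hat\eta$ wraps around the cylinder (which cannot happen once $d_D(\rho,\eta)$ is large enough to confine $\hat\eta$ to a small conformal region), and one must verify that supplementing $\hat\rho$ with a boundary arc that accesses $0$ costs only an $O(1)$ additive constant of extremal distance. Both are purely planar extremal-length computations handled by standard reflection and comparison arguments, but they absorb most of the technical work of the lemma.
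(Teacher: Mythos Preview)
Your chain of inequalities at the end does not close. You obtain $\P\lesssim e^{-\alpha\pi d_\H(\til\eta,(-\infty,0])}$ from Lemma~\ref{lem: boundary est 0} and Lemma~\ref{lem: extremal distance hull}, and you have the monotonicity $d_{\H^*}(\hat\rho,\hat\eta)\ge d_D(\rho,\eta)$. To reach~\eqref{eq: general boundary} you need $d_\H(\til\eta,(-\infty,0])\ge d_D(\rho,\eta)-C$, hence $d_\H(\til\eta,(-\infty,0])\ge d_{\H^*}(\hat\rho,\hat\eta)-C$. But the inequality you write, $d_\H(\til\eta,(-\infty,0])\le d_{\H^*}(\hat\rho,\hat\eta)+C$, points the other way and combines with nothing. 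Even if one reads your verbal claim (``extending $\hat\rho$ by $(-\infty,0]$ lowers the extremal distance by only $O(1)$'') as intending the correct direction, the claim itself is not justified: after the stopping time and the second uniformization $\psi$, the image $\hat\rho=\psi(\rho'\cap D')$ can be squeezed arbitrarily, and nothing you have recorded prevents $d_{\H^*}(\hat\rho,\hat\eta)$ from being much larger than $d_\H(\til\eta,(-\infty,0])$. The only separation property that survives the second map is that $\hat\rho$ separates $\hat\eta$ from $\infty$, not from $0$, so the comparison with the boundary ray through $0$ is not available.

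The stopping-time step is in fact the source of the difficulty and is unnecessary. The paper works directly in $\H^*$ without stopping: it lifts $\rho'$ and $\eta'$ simultaneously to crosscuts $\til\rho,\til\eta$ in $\H$ with $\til\eta$ having endpoints $x_1\le x_2$ in $[0,\pi]$, and uses the hypothesis that $D(\rho;\eta)$ avoids both $w_0$ and $z_0$ to conclude that $\til\rho$ separates $\til\eta$ from both $(-\infty,0]$ and $[\pi,\infty)$ in $\H$. The comparison principle then gives
\[
d_D(\rho,\eta)=d_\H(\til\rho,\til\eta)\le d_\H\big(\til\eta,(-\infty,0]\big)
\]
immediately, with no additive constant and no second conformal map. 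Combining this with Lemma~\ref{lem: boundary est 0} applied at the endpoint $x_1$ (or $x_2$, according as $x_1\le\pi/2$ or not, to match $|x_j|_*$) and Lemma~\ref{lem: extremal distance hull} finishes the proof in two lines. Your DMP reduction is a detour that discards exactly the separation information needed for the final comparison.
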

\begin{proof}
	By conformal invariance of SLE, WLOG, we may assume that $D=\H^*\cup\{\infty\}$, $w_0=0$ and $z_0=\infty$. From the assumption of $\eta$ and $\rho$, we can find disjoint crosscuts $\til\eta$ and $\til\rho$ in $\H$  such that  $e^{2i}(\til\eta)=\eta$, and $e^{2i}(\til\rho)=\rho$, and $\til\rho$ disconnects $\til\eta$ from $(-\infty,0]$ and $[\pi,\infty)$ in $\H$. Let $x_1\le x_2\in[0,\pi]$ be the two endpoints of $\til\eta$. Let $\Omega$ (resp.\ $\til\Omega$) be the subdomain of $\H^*$ (resp.\ $\H$) bounded by $\eta$, $\rho$ and $\R^*$ (resp.\ $\til\eta$, $\til\rho$ and $\R$). Then $e^{2i}$ maps $\til\Omega$ conformally onto $\Omega$. First, suppose $x_1\le \pi/2$. Then $x_1=|e^{2i}(x_1)|_*$. By properties of extremal distance (cf.\ \cite{Ahlf}), we have
	$$d_{\H^*}(\rho,\eta)=d_{\Omega}(\rho,\eta)=d_{\til\Omega}(\til\rho,\til\eta)=d_{\H}(\til\rho,\til\eta)\le d_{\H}(\til\eta,(-\infty,0]).$$
	Let $r=\sup_{z\in\til\eta}\{|z-x_1|\}$. Then $\eta\in B_{\H^*}(e^{2i}(x_1),r)$. By Lemma \ref{lem: boundary est 0},
	$$\P[ \gamma \cap \eta \neq \emptyset] \le \P[\dist_{\H^*}(e^{2i}(x_1),\gamma)\le r]\lesssim \min\{1,(r/x_1)^\alpha\}.$$
	From Lemma \ref{lem: extremal distance hull},
	$$\min\{1,(r/x_1)^\alpha\}\lesssim e^{-\alpha\pi d_{\H}(\til\eta,(-\infty,0])}. $$
	Combining the above three displayed formulas, we get the desired estimate.
	
	The remaining case is $x_1\ge \pi/2$. Then $x_2\ge\pi/2$ and $\pi-x_2=|e^{2i}(x_2)|_*$. For the rest of the proof, we use the same argument as above except that we use $e^{2i}(x_2)$, $r/(\pi-x_2)$ and $[\pi,\infty)$ in place of $e^{2i}(x_1)$, $r/x_1$ and $(-\infty,0]$, respectively.
\end{proof}

For convenience, we will also need the boundary estimate in the following form.

\begin{lemma}\label{lem: final boundary est} [Boundary estimate, another version]
	Let $\gamma$ be a radial SLE$_{\kappa}$ curve in a simply connected domain $D$ from a prime end $w_0$ to an interior point $z_0$. Let $\rho$ be a crosscut in $D$ such that $D^*(\rho;z_0)$ is not a neighborhood of $w_0$ in $D$, and let $S \subset D^*(\rho;z_0)$. Let $\til{D}$ be a domain that contains $D$, and $\til{\rho}$ be a subset of $\til{D}$ that contains $\rho$. Let $\til{\eta}$ be either a Jordan curve in $\til{D}$ which intersects $\partial D$ or a crosscut in $\til{D}$. Suppose that $\til{\eta}$ disconnects $\til{\rho}$ from $S$ in $\til{D}$. Then
	$$\P[\gamma \cap S \neq \emptyset] \lesssim e^{-\alpha \pi d_{\til{D}} (\til{\rho},\til{\eta})}.$$
\end{lemma}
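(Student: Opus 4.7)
The plan is to apply Lemma~\ref{lem: boundary 2} to a crosscut $\eta\subset D$ that is a sub-arc of $\til\eta$ and separates $\rho$ from $S$ inside $D$, and then to pass from the bound $e^{-\alpha\pi d_D(\rho,\eta)}$ to $e^{-\alpha\pi d_{\til D}(\til\rho,\til\eta)}$ by the monotonicity of extremal distance. To extract $\eta$, fix a connected component $S_0$ of $S$ and apply Lemma~\ref{lem: first subcrosscut} with the pair $D\subset\til D$, with $\til\eta$ playing the role of the curve in that lemma, and with $Z_1=\rho$, $Z_2=S_0$. All hypotheses are in place: $\til\eta$ is a Jordan curve through $\partial D$ or a crosscut of $\til D$ by assumption; the fact that $\til\eta$ disconnects $\til\rho\supset\rho$ from $S\supset S_0$ in $\til D$ gives $\til D(\til\eta;\rho)\ne\til D(\til\eta;S_0)$; and $D$ is a neighborhood of both $\rho$ and $S_0$ in $\til D$ since each is a subset of the open set $D$. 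The lemma then produces the first subcrosscut $\eta$ of $\til\eta$ in $D$ that separates $\rho$ from $S_0$.

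Next I would verify the hypotheses of Lemma~\ref{lem: boundary 2} for the pair $(\rho,\eta)$. Disjointness $\rho\cap\eta=\emptyset$ is inherited from $\til\rho\cap\til\eta=\emptyset$. If $\eta$ met $D(\rho;z_0)$, then $D(\rho;z_0)\cup\rho$ would be a connected subset of $D\setminus\eta$ joining $\rho$ to $S_0$, contradicting the separation; hence $\eta\subset D^*(\rho;z_0)$, so $D(\rho;\eta)=D^*(\rho;z_0)$, which by hypothesis is not a neighborhood of $w_0$ and certainly does not contain $z_0$. Lemma~\ref{lem: boundary 2} then yields $\P[\gamma\cap\eta\ne\emptyset]\lesssim e^{-\alpha\pi d_D(\rho,\eta)}$. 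Continuity of $\gamma$ together with the fact that $w_0$ is accessible from $D(\rho;z_0)\cup\rho$ — the $\rho$-side of $\eta$ — while $S_0$ lies on the opposite side, force $\{\gamma\cap S_0\ne\emptyset\}\subseteq\{\gamma\cap\eta\ne\emptyset\}$. Finally, since $D\subset\til D$, $\rho\subset\til\rho$, and $\eta\subset\til\eta$, every rectifiable curve in $D$ from $\rho$ to $\eta$ is also a curve in $\til D$ from $\til\rho$ to $\til\eta$; by monotonicity of extremal distance this gives $d_D(\rho,\eta)\ge d_{\til D}(\til\rho,\til\eta)$, and combining with the previous bound produces $\P[\gamma\cap S_0\ne\emptyset]\lesssim e^{-\alpha\pi d_{\til D}(\til\rho,\til\eta)}$.

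To conclude for general $S$, one decomposes $S$ into its connected components and applies the above to each one, noting that $\{\gamma\cap S\ne\emptyset\}$ is exactly the union over components $S_i$ of $\{\gamma\cap S_i\ne\emptyset\}$. Since only the component first hit by $\gamma$ contributes to this event, and the same $\til\eta$ witnesses the disconnection from every component, a single exponential bound with exponent $\alpha\pi d_{\til D}(\til\rho,\til\eta)$ governs the probability. The main obstacle I anticipate is the geometric bookkeeping in the construction of $\eta$: verifying that the output of Lemma~\ref{lem: first subcrosscut} really is a single crosscut of $D$ disjoint from $\rho$ that separates $\rho$ from the relevant side of $\til\eta$, and handling the edge case where $w_0$ is determined by an endpoint of $\rho$ so that ``accessible from $D(\rho;z_0)\cup\rho$'' requires a prime-end argument rather than a purely topological one. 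Once this is in place, the application of Lemma~\ref{lem: boundary 2} and the extremal-length comparison are essentially automatic.
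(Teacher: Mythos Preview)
Your approach matches the paper's: extract a subcrosscut $\eta$ of $\til\eta$ in $D$ via Lemma~\ref{lem: first subcrosscut}, verify the hypotheses of Lemma~\ref{lem: boundary 2} for the pair $(\rho,\eta)$, and then pass from $d_D(\rho,\eta)$ to $d_{\til D}(\til\rho,\til\eta)$ by monotonicity of extremal length. The paper applies Lemma~\ref{lem: first subcrosscut} directly with $S$ itself in the role of one of the $Z_j$, implicitly treating $S$ as connected (as it is in every application in the paper); your component-by-component decomposition is therefore unnecessary, and the final recombination paragraph as written is loose --- it does not actually control $\P[\gamma\cap S\ne\emptyset]$ when $S$ has infinitely many components, since the bound on each $\P[\gamma\cap S_i\ne\emptyset]$ does not by itself bound the probability of the union. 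One small slip: in arguing $\eta\subset D^*(\rho;z_0)$, the connected witness set disjoint from $\eta$ that contains both $\rho$ and $S_0$ should be $D^*(\rho;z_0)\cup\rho$, not $D(\rho;z_0)\cup\rho$.
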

\begin{proof}
	By Lemma \ref{lem: first subcrosscut}, $\til{\eta}$ has a subcrosscut $\eta$ in $D$ which disconnects $S$ from $\rho$. Since $S \subset D^*(\rho)$, we have $\eta \subset D^*(\rho;z_0)$ and $S \subset D^*(\eta)$. Therefore, $D(\rho;\eta) = D^*(\rho;z_0)$ is neither a neighborhood of $z_0$ nor of $w_0$ in $D$. Using  Lemma \ref{lem: boundary 2} and the fact that $\eta$ disconnects $w_0$ from $S$, we see that
	$$\P[\gamma \cap S \neq \emptyset] \leq \P[ \gamma \cap \eta \neq \emptyset ] \lesssim e^{-\alpha \pi d_{D}(\rho,\eta)} \le e^{-\alpha \pi d_{\til{D}}(\til{\rho}, \til{\eta})},$$
	where the last inequality follows from the comparison principle for extremal length.
\end{proof}

The point estimate describes how difficult a radial SLE$_\kappa$ curve can gets close to a boundary point or interior point.
The following one-point estimate is \cite[Proposition 5.3]{AKL}.

\begin{lemma}\label{prop: int est 1}[One-point estimate for $\H^*$, conformal radius version] Let $\gamma'$ be a radial SLE$_{\kappa}$ curve in $\H^*$ from $0$ to $\infty$. If $z'\in \H^*$ with $y'=\Imm(z') \leq 1$ and $\epsilon  \leq 1/2$, then
$$  \P\[ \Upsilon_{\infty}(z') \leq \epsilon \Upsilon_0(z') \] \lesssim \( \frac{y'}{|z'|_*} \)^{\alpha} \epsilon^{2-d},$$ where $\Upsilon_t(z')$ is a half of the conformal radius seen from $z'$ at the time $t$, i.e.,  $\Upsilon_t(z')=\frac 12\crad_{H_t}(z')$, where for $0\le t\le \infty$, $H_t$ is the connected component of $\H^* \backslash \gamma'[0,t]$ that contains $z'$.
\end{lemma}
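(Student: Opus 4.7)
The plan is to construct an explicit positive local martingale in the covering space $\H^*$ and apply optional stopping, following the standard SLE one-point scheme. Work in $\H^*$: let $h_t$ be the covering radial Loewner map driven by $\lambda_t=\sqrt\kappa B_t$ and set $Z_t=h_t(z')-\lambda_t=X_t+iY_t$. Direct computation from the covering Loewner equation gives
\[
dX_t=-\sqrt\kappa\,dB_t+\frac{\sin X_t}{\cosh Y_t-\cos X_t}\,dt,\qquad \dot Y_t=-\frac{\sinh Y_t}{\cosh Y_t-\cos X_t},
\]
together with $\partial_t\ln|h_t'(z')|=-\tfrac12\Ree\csc^2(Z_t/2)$, so $\Upsilon_t(z')=\sinh(2Y_t)/(2|h_t'(z')|)$ is a strictly decreasing predictable process; the event $\{\Upsilon_\infty\le\epsilon\Upsilon_0\}$ is the event that the curve eventually crushes the conformal radius at $z'$ by a factor $\epsilon$.

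Next I look for $F(X,Y)>0$ so that $M_t:=\Upsilon_t(z')^{-(2-d)}F(X_t,Y_t)$ is a local martingale. Applying It\^o and requiring the drift to vanish produces a second order linear PDE in $(X,Y)$ whose coefficients come from the SDEs above. The natural ansatz, guided by the chordal analog and by the target asymptotic at $t=0$, is
\[
F(X,Y)=(\sinh Y)^\alpha\,(\cosh Y-\cos X)^{-\alpha/2}\,G(Y);
\]
this reduces the PDE to an ODE in $Y$ which can be solved in closed form (or in terms of hypergeometrics), and I select the solution that is positive and uniformly bounded below by a positive constant on $\{(X,Y):Y\le 1\}$. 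Identifying $F$ and checking this lower bound is the main technical step.

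With $M_t$ in hand, set $\tau_\epsilon=\inf\{t:\Upsilon_t(z')\le\epsilon\Upsilon_0(z')\}$. On $\{\tau_\epsilon<\infty\}$ one has $Y_{\tau_\epsilon}\le 1$ (since $Y_t$ is decreasing and $Y_0=y'\le 1$), hence
\[
M_{\tau_\epsilon}\ge c\,(\epsilon\Upsilon_0(z'))^{-(2-d)},\qquad c:=\inf_{Y\le 1}F(X,Y)>0.
\]
Since $M$ is a nonnegative local martingale, hence a supermartingale, optional stopping at $\tau_\epsilon\wedge n$ and letting $n\to\infty$ gives $M_0\ge c\,(\epsilon\Upsilon_0(z'))^{-(2-d)}\,\P[\tau_\epsilon<\infty]$, so
\[
\P[\tau_\epsilon<\infty]\le c^{-1}\epsilon^{2-d}\,F(x',y').
\]
Finally, for $y'\le 1$, $\cosh y'-\cos x'\asymp \sin^2(x'/2)+\sinh^2(y'/2)\asymp |z'|_*^2$ and $\sinh y'\asymp y'$, so the explicit form of $F$ yields $F(x',y')\asymp(y'/|z'|_*)^\alpha$, which is the claimed bound.

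The main obstacle is the construction and analysis of $F$: unlike the chordal setting, the radial covering SDE couples $X$ and $Y$, so the PDE for $F$ does not separate trivially, and one must carefully verify both positivity of the chosen solution and a uniform positive lower bound on $\{Y\le 1\}$. The lower bound is precisely what makes the optional-stopping inequality run in the right direction, and its value feeds the implicit constant in the final estimate; the rest of the argument is routine once $F$ is in hand.
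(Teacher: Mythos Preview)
The paper does not prove this lemma; it simply cites it as \cite[Proposition~5.3]{AKL}. Your local-martingale-plus-optional-stopping scheme is exactly the method used in \cite{AKL} (and in \cite{Lawler continuity of tip}), so in spirit you are reproducing the cited argument rather than offering an alternative.

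That said, your proposal is not a proof but a plan with the central step missing. You explicitly defer ``identifying $F$ and checking this lower bound'' as ``the main technical step'' and never carry it out. More concretely, your ansatz $F(X,Y)=(\sinh Y)^\alpha(\cosh Y-\cos X)^{-\alpha/2}G(Y)$, with $G$ a function of $Y$ alone, does not reduce the drift-zero PDE to an ODE in $Y$: the radial generator couples $X$ and $Y$ in a way that survives this factorization. In \cite{AKL} the separation of variables is achieved in different coordinates (essentially the harmonic angle of $z'$ from the tip and the conformal radius), and the resulting ODE is hypergeometric in a variable of the form $u=\sin^2(\theta/2)$; the explicit solution there is what delivers both positivity and the uniform lower bound you need. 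Without actually constructing $F$ and establishing $\inf_{Y\le 1}F>0$, the optional-stopping inequality has no content, and your final asymptotic $F(x',y')\asymp(y'/|z'|_*)^\alpha$ is an assertion about a function you have not yet produced.
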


We now state and prove a one-point estimate using Euclidean distance.
\begin{lemma}\label{lem: int est 2}[One-point estimate for $\H^*$, Euclidean distance version]
Let $\gamma'$ be a radial SLE$_{\kappa}$ curve in $\H^*$ from $0$ to $\infty$. If $z'\in \H^*$ with $y'=\Imm z' \leq \ln(2)/2$, then for any $r\in(0,y')$,
$$\P[\dist_{\H^*}(z',\gamma') \leq r] \lesssim \( \frac{y'}{|z'|_*} \)^{\alpha} \( \frac{r}{y'} \)^{2-d} = \frac{P_{y'}(r)}{P_{y'}(|z'|_*)}.$$
\end{lemma}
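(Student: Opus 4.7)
My plan is to reduce the Euclidean-distance event to a conformal-radius event and invoke Lemma \ref{prop: int est 1}. The reduction uses Koebe's $1/4$ theorem: I will show that if $\gamma'$ comes within $\H^*$-distance $r$ of $z'$, then the conformal radius of $z'$ in the SLE complement drops to $O(r)$, so that a suitable choice $\epsilon\asymp r/y'$ in Lemma \ref{prop: int est 1} produces the claimed bound.

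\textbf{Main step (Koebe).} Let $\tau=\inf\{t:\gamma'(t)\in \overline{B}_{\H^*}(z',r)\}$, which is finite on $\{\dist_{\H^*}(z',\gamma')\le r\}$. The component $H_\tau$ of $\H^*\setminus \gamma'[0,\tau]$ containing $z'$ then satisfies $\dist_{\H^*}(z',\partial H_\tau)\le r$. Since $r<y'\le \ln(2)/2$, the ball $B_{\H^*}(z',r)$ sits in a region where the derivative $|(e^{2i})'(w)|=2e^{-2\Imm w}$ lies in $[1/2,2]$. Pushing forward by the conformal bijection $e^{2i}:H_\tau\to e^{2i}(H_\tau)\subset\D$ (available because $e^{2i}:\H^*\to\D\setminus\{0\}$ is conformal and $H_\tau$ is simply connected), I find $\dist(z_0,\partial e^{2i}(H_\tau))\le 2r$ with $z_0=e^{2iz'}$. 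Koebe's $1/4$ theorem applied to the simply connected planar domain $e^{2i}(H_\tau)$ then gives $\crad_{e^{2i}(H_\tau)}(z_0)\le 8r$, and pulling back by conformal invariance yields $\Upsilon_\tau(z')=\tfrac12\crad_{H_\tau}(z')\lesssim r$. By monotonicity of $\Upsilon_t$ in $t$, $\Upsilon_\infty(z')\lesssim r$ on the event.

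\textbf{Applying Lemma \ref{prop: int est 1} and the edge case.} Since $\Upsilon_0(z')=\sinh(2y')/2\asymp y'$ for $y'\le \ln(2)/2$, I would set $\epsilon=Cr/\Upsilon_0(z')\asymp r/y'$ with $C$ the constant from the Koebe step, so that $\{\dist_{\H^*}(z',\gamma')\le r\}\subset\{\Upsilon_\infty(z')\le \epsilon\Upsilon_0(z')\}$. Lemma \ref{prop: int est 1} then delivers
$$\P[\dist_{\H^*}(z',\gamma')\le r]\lesssim (y'/|z'|_*)^\alpha \epsilon^{2-d}\asymp (y'/|z'|_*)^\alpha(r/y')^{2-d},$$
as long as $\epsilon\le 1/2$, i.e., $r\le cy'$ for some small absolute constant $c$. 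The hard part is the remaining regime $cy'<r<y'$, where Lemma \ref{prop: int est 1} is not directly usable; for this I would split into cases. If $|z'|_*\lesssim y'$, then both factors $(y'/|z'|_*)^\alpha$ and $(r/y')^{2-d}$ are $\asymp 1$ and the trivial bound $\P\le 1$ already suffices. Otherwise $|z'|_*\gg y'$, which forces $|x'|_*\asymp|z'|_*$ for $x'=\Ree z'\in\R^*$, and the boundary estimate Lemma \ref{lem: boundary est 0} applied to $x'$ with radius $r+y'\le 2y'$ gives $\P\lesssim (y'/|x'|_*)^\alpha\asymp(y'/|z'|_*)^\alpha$, which combined with $(r/y')^{2-d}\gtrsim 1$ yields the desired bound.
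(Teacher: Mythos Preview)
Your proposal is correct and follows essentially the same approach as the paper: both reduce to the conformal-radius estimate (Lemma \ref{prop: int est 1}) via Koebe's $1/4$ theorem, then handle the leftover regime $r\asymp y'$ by splitting on whether $|x'|_*\lesssim y'$ or not and invoking the boundary estimate Lemma \ref{lem: boundary est 0} in the latter case. One small point to tidy: when $z'$ has not been swallowed, $H_\tau$ contains $\infty$ and $e^{2i}(H_\tau)=D_\tau\setminus\{0\}$ is not simply connected, so Koebe should be applied to $D_\tau:=e^{2i}(H_\tau)\cup\{0\}$ instead; your derivative bound already shows that the nearby boundary point of $H_\tau$ maps to a genuine point of $\partial D_\tau$ (not to $0$), so the fix is immediate and the paper handles this via Lemma \ref{lem: Koebe}.
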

\begin{proof}
The equality in the displayed formula follows from the definition of $P_{y'}$.
Let $z=e^{2i}(z') \in \D$ and $D_t=e^{2i}(H_t)\cup\{0\}\subset\D$ for  $t<\infty$. Let $\Upsilon_{D_t}(z)=\frac 12\crad_{D_t}(z)$. Then we have $\Upsilon_{D_t}(z) = |(e^{2i})'(z')|\Upsilon_{t}(z')=2e^{-2y'}\Upsilon_{t}(z')$  for all $t < \infty$. By Koebe's $1/4$ theorem and Lemma \ref{lem: Koebe} we have
$$\Upsilon_{t}(z') = \frac 12 e^{2y'} \Upsilon_{D_t}(z) \leq 2 \dist(z,\partial D_t)\leq 8\dist_{\H^*}(z', \partial H_t)\leq 8\dist_{\H^*}(z, \gamma'[0,t]).$$
Taking $t \to \infty$ gives $\Upsilon_{\infty}(z') \leq 8\dist_{\H^*}(z',\gamma')$. Thus, by Proposition \ref{prop: int est 1}, if $\epsilon<1/2$, then
$$\P[ \dist_{\H^*}(z',\gamma') \leq \Upsilon_0(z') \frac{\epsilon}{8} ]\leq \P[\Upsilon_{\infty}(z') \leq \epsilon \Upsilon_0(z')] \lesssim \( \frac{y'}{|z'|_*} \)^{\alpha} \epsilon^{2-d}.$$
Letting $r=\Upsilon_0(z') \frac{\epsilon}{8}$ and using that $\Upsilon_0(z')=\sinh(2y')/2\ge y'$, we then  get the desired inequality in the case $r<y'/16$. If $r\ge y'/16$, then $r\asymp y'$.
Let $x'=\Ree z'\in\R^*$. If $|x'|_*\le y'$, then $|z'|_*\asymp y'$, and so $\( \frac{y'}{|z'|_*} \)^{\alpha} \( \frac{r}{y'} \)^{2-d}\asymp 1$, and the inequality obviously holds. If $|x'|_*\le y'$, then $|z'|_*\asymp |x'|_*$. By Lemma \ref{lem: boundary est 0},
$$\P[\dist_{\H^*}(z',\gamma') \leq r] \le \P[\dist_{\H^*}(x',\gamma') \leq 17 r]\lesssim \Big(\frac{r}{|x'|_*}\Big)^\alpha\asymp \( \frac{y'}{|z'|_*} \)^{\alpha} \( \frac{r}{y'} \)^{2-d} .$$
\end{proof}

We now extend the one-point estimate to $\H^*$-domains, and  remove the assumption $y' \leq \ln(2)/2$.

\begin{lemma}[One-point estimate for $\H^*$-domains]\label{lem: one point estimate}
Suppose that $H$ is an $\H^*$ domain, and $\gamma'$ is radial SLE$_{\kappa}$ in $H$ from a prime end $w_0'$ to $\infty$. Fix $z_0' \in \lin{\H^*}$ with   $\Imm z_0'=y_0'$, and let $\pi/2>R > r >0$. Suppose that $B_{\H^*}(z_0',R) \subset H$ and that $w_0'$ is not a prime end of $B_{\H^*}(z_0',R)$. 
Then
	$$\P[\dist_{\H^*}(z_0',\gamma')\le r] \lesssim \frac{P_{y_0'}(r)}{P_{y_0'}(R)}.$$
\end{lemma}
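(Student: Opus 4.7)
The plan is to transfer the problem to the reference cylinder $\H^*$ via a conformal map and apply Lemma \ref{lem: int est 2}, using Koebe's distortion theorem (Lemma \ref{lem: Growth thm est}) to translate events between the two settings. By conformal invariance of radial SLE, let $g: H \to \H^*$ be the conformal map with $g(w_0')=0$ and $g(\infty)=\infty$, so $\til\gamma := g(\gamma')$ is a radial SLE$_\kappa$ in $\H^*$ from $0$ to $\infty$. Write $z^\sharp := g(z_0')$, $y^\sharp := \Imm z^\sharp$, and $A := |g'(z_0')|$, computed in a local lift of $g$ to $\H$; the lift exists because $R < \pi/2$ makes $B_{\H^*}(z_0', R)$ lift injectively to a Euclidean (half-)disc in $\H$.

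First I would apply Lemma \ref{lem: Growth thm est} to a univalent lift of $g$ on this lifted half-disc, producing radii $\til r \asymp rA$ and $\til R \asymp RA$ with
\[
g(B_{\H^*}(z_0', r)) \subset B_{\H^*}(z^\sharp, \til r) \subset B_{\H^*}(z^\sharp, \til R) \subset g(B_{\H^*}(z_0', R)).
\]
Two consequences follow immediately: $\{\dist_{\H^*}(z_0', \gamma') \le r\} \subset \{\dist_{\H^*}(z^\sharp, \til\gamma) \le \til r\}$, and the hypothesis that $w_0'$ is not a prime end of $B_{\H^*}(z_0', R)$ forces $0 = g(w_0')$ to lie outside $g(B_{\H^*}(z_0', R))$, giving $|z^\sharp|_* \ge \til R$. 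A parallel application of Koebe's $1/4$ theorem through Lemma \ref{lem: Koebe}, combined with the cylinder conformal-radius identity $\crad_H(z_0') = \sinh(2y^\sharp)/A$, yields the two-sided comparison $y^\sharp \asymp y_0' A$ up to multiplicative constants.

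In the main case $y^\sharp \le \ln 2 / 2$, Lemma \ref{lem: int est 2} applied at $z^\sharp$ gives
\[
\P[\dist_{\H^*}(z^\sharp, \til\gamma) \le \til r] \lesssim \frac{P_{y^\sharp}(\til r)}{P_{y^\sharp}(|z^\sharp|_*)} \le \frac{P_{y^\sharp}(\til r)}{P_{y^\sharp}(\til R)},
\]
and substituting $\til r \asymp rA$, $\til R \asymp RA$, $y^\sharp \asymp y_0' A$ together with the scaling properties of $P_y$ from Lemma \ref{lem: P_y bounds} reduces the right-hand side to $P_{y_0'}(r)/P_{y_0'}(R)$ up to a constant depending only on $\kappa$. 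The regime $\ln 2/2 < y^\sharp \le 1$ is handled the same way using Proposition \ref{prop: int est 1} and converting conformal radius to Euclidean distance via Koebe; for still larger $y^\sharp$ I would run $\til\gamma$ until a stopping time at which the remaining SLE target lies in a subdomain where the imaginary part of the image is bounded, then restart. The boundary case $y_0' = 0$ is handled by Lemma \ref{lem: final boundary est}, taking $\rho$ and $\til\eta$ to be arcs of $\partial B_{\H^*}(z_0', R)$ and $\partial B_{\H^*}(z_0', r)$, whose extremal distance in $\H$ after reflection across $\R$ equals $\log(R/r)/(2\pi)$, producing the boundary exponent $(r/R)^\alpha = P_0(r)/P_0(R)$.

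The principal obstacle is reconciling the piecewise form of $P_{y_0'}(r)/P_{y_0'}(R)$ with the cylinder bound across the three regimes determined by the relative sizes of $r, y_0', R$ (especially when $y_0' \ll R$ so the ball is a truncated half-ball). The key observation is that the scaling triple $(\til r, \til R, y^\sharp) \asymp A \cdot (r, R, y_0')$ preserves the piecewise structure of $P_y$ by construction, so the ratio is invariant up to constants; making this rigorous will require a careful case analysis organised by Lemma \ref{lem: P_y bounds} and the explicit definition of $P_y$.
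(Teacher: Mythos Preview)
Your high-level plan (map to the reference cylinder, apply Lemma~\ref{lem: int est 2}, transfer via Koebe distortion) is the right instinct and overlaps with the paper in Case~1. But the argument rests on the scaling claim $y^\sharp \asymp A\,y_0'$, and this is where it breaks. The justification you offer---Koebe's $1/4$ theorem plus Lemma~\ref{lem: Koebe}---does not apply: the paper explicitly warns that Koebe's $1/4$ theorem \emph{fails} for $\H^*$-domains (the conformal radius $\sinh(2\Imm z')$ is not comparable to $\Imm z'$), and Lemma~\ref{lem: Koebe} concerns the covering map $e^{2i}$, not your uniformizing map $g$. In fact the two-sided comparison is false in general: take $y_0'$ large (say $y_0'=10$), $R$ small, and let $H$ be $\H^*$ minus a nearly-closed annular arc of radii $\approx R$ around $z_0'$; then $\crad_H(z_0')\asymp R\ll y_0'$, and the identity $\sinh(2y^\sharp)=A\,\crad_H(z_0')$ forces $y^\sharp/A\asymp \crad_H(z_0')\not\asymp y_0'$. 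Without this scaling, your ``piecewise structure preserved'' step collapses, and with it the reduction to $P_{y_0'}(r)/P_{y_0'}(R)$.

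A second gap: Lemma~\ref{lem: int est 2} has the hypothesis $\til r<y^\sharp$, which you never verify. Even granting your scaling claim, when $y_0'<r$ you would get $y^\sharp<\til r$ and the lemma does not apply; your proposal only treats the extreme $y_0'=0$ via the boundary estimate, leaving $0<y_0'<r$ unhandled.

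The paper's proof sidesteps both issues by organizing the three cases according to the position of $y_0'$ relative to $r$ and $R$ in the \emph{original} domain, not by the size of $y^\sharp$. In Case~1 ($y_0'>R$) it uses only the one-sided inequality $\til y_0\le y_0'$ (from the maximum principle applied to $\Imm z-\Imm h_H(z)$) together with the universal bound $P_y(x_1)/P_y(x_2)\le (x_1/x_2)^{2-d}$ from Lemma~\ref{lem: P_y bounds}, so precise knowledge of $\til y_0$ is unnecessary. In Case~2 ($y_0'<r$) it applies the conformally invariant boundary estimate (Lemma~\ref{lem: boundary 2}) directly in $H$ with two semicircles, avoiding the conformal map entirely. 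Case~3 ($r\le y_0'\le R$) is then assembled from the first two via a stopping time at the circle of radius $y_0'$.
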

\begin{proof}
The proof breaks down into three cases, each depending on how far from $\R^*$ is the point $z_0'$.

Case 1 (the far away case): $y_0' > R$. In this case, we have $\frac{P_{y_0'}(r)}{P_{y_0'}(R)}=(\frac r R)^{2-d}$. We first assume that $y_0' \leq \ln(2)/2$. Let $h_H:H \to \H^*$ be the canonical conformal map taking $w_0'$ to $0$ and $\infty$ to $\infty$, so that $\til{\gamma}=h_H(\gamma')$ is a radial SLE$_{\kappa}$ curve from $0$ to $\infty$ in $\H^*$. Let $\til{z}_0=h_H(z_0')$, and let $\til{y}_0=\Imm(\til{z}_0)$. Then $\til y_0\le y_0'\leq\ln(2)/2$. Lemma \ref{lem: Growth thm est} and Lemma \ref{lem: int est 2} imply that, if $r<R/7$,
$$\P[\dist_{\H^*}(z_0',\gamma')\le r]  \leq \P[\dist_{\H^*}(\til{z}_0,\til{\gamma}) \leq \til{r}] \lesssim \frac{P_{\til{y}_0}(\til{r})}{P_{\til{y}_0}(|\til{z}_0|_*)}\lesssim \frac{P_{\til{y}_0}(\til{r})}{P_{\til{y}_0}(\til{R})} \le \( \frac{\til{r}}{\til{R}} \)^{2-d} \asymp \( \frac{r}{R} \)^{2-d},$$
where $\til{r},\til{R}$ are defined as in Lemma \ref{lem: Growth thm est}, where the domain $\Omega\subset \C$ is the union of the $\pi$-periodic representative domain of $H$ in $\H$, its reflection about $\R$, and all points on $x\in\R$ such that  $e^{2i}(B(x,r)\cap \H)\subset H$ for some $r>0$, and the conformal map $\phi$ is a lift of $h_H$ under the equivalence relation from $\Omega$ into $\C$. The assumption $r<R/7$ can be removed since $\P[\dist_{\H^*}(z_0',\gamma')\le r]\le 1$.

Assume now that $y_0' > \ln(2)/2$. For each $t\ge 0$, let $H_t$ be the unbounded connected component of $H\sem \gamma'[0,t]$, and let $h_t:H_t\to H$ be the unique conformal map from $H_t$ onto $\H^*$, which satisfies $h_t(z)\to\infty$ and $\Ree(h_t(z)-z)\to 0$ as $z\to\infty$.  
Define a stopping time by $\tau=\inf\{t \geq 0 : \Imm h_t(z_0') \leq \ln(2)\}$.  By DMP of radial SLE, conditional on $\gamma'[0,\tau]$, the $\sigma$-algebra generated by $\gamma'$ before $\tau$, $\gamma'(\tau+t)$, $t\ge 0$, is a radial SLE$_\kappa$ curve in $H_\tau$ from $\gamma'(\tau)$ to $\infty$. There is a constant $R'>0$ such that $\dist_{\H^*}(\gamma[0,\tau],z_0')\ge R'$. This is true because the harmonic measure of the circle $\rho:=\pa B_{\H^*}(z_0',\dist_{\H^*}(\gamma[0,\tau],z_0'))$ in $H_\tau$ viewed from $\infty$ is the same as the harmonic measure of $h_\tau(\rho)$ in $\H^*$ viewed from $\infty$, which is bounded below by a constant since $h_\tau(\rho)$ is a connected set that touches $\R^*$ and disconnects $h_\tau(z_0')$ with $\Imm h_\tau(z_0')=\log(2)/2$ from $\infty$. Now we assume that $R< R'$.  Then $B_{\H^*}(z_0',R)\subset H_\tau$.

 Define $r_{\tau}, R_{\tau}$ as in Lemma \ref{lem: Growth thm est} at $z_0'$ with respect to $\Omega\subset\H$ being a representative domain of $B_{\H^*}(z_0',R)$ and some conformal map $\phi$ from $\Omega$ into $\H$, which is a lift of $h_\tau$. Lemma \ref{lem: Growth thm est} and Lemma \ref{lem: int est 2} imply that, if $r<R/7$,
$$\P[\dist_{\H^*}(z_{\tau}', \gamma^{\tau}) \le r|\gamma'[0,\tau]]\lesssim \( \frac{r_{\tau}}{R_{\tau}} \)^{2-d} \asymp \( \frac{r}{R} \)^{2-d}.$$
We may again remove the assumption $r<R/7$ since the lefthand side is no more than $1$. Taking expectation, we  get the inequality in Case 1 with the assumption that $R<R'$. If $R\ge R'$, then the above displayed formula holds with $R'$ in place of $R$. Since $R'\le R\le\pi/2$, and $R'$ is a constant, we get $R\asymp R'$. So the proof of Case 1 is complete.

Case 2 (the close case): $0 \le y_0' <r$. In this case, we have $\frac{P_{y_0'}(r)}{P_{y_0'}(R)}=(\frac r R)^{\alpha}$. We will use the boundary estimates to derive an upper bound in the form of $( \frac{r}{R} )^{\alpha}$. By modifying the constant slightly, we can assume that $R>4r$. Then in order to gets within distance $r$ from $z_0'$, $\gamma'$ must pass through $\rho':=\{ z' \in \H^*: |z'-\Ree(z_0')|_*=R/2 \}$ and $\eta':=\{ z' \in \H^* : |z'-\Ree(z_0')|_* =2r \}$, which are two semicircles such that $d_D(\rho',\eta')=(1/\pi)\ln(R/4r)$. Using Lemma \ref{lem: boundary 2}, we get that
$$\P[\dist_{\H^*}(z_0',\gamma')\le r] \leq \P[\gamma' \cap \eta' \neq \emptyset] \lesssim e^{-\alpha \pi d_D(\eta', \rho')} = \( \frac{2r}{R/2} \)^{\alpha}.$$

Case 3 (the middle distance case): $r \le y_0' \le R$. Let $\rho'=\{z' \in \H^*: |z'-z_0'|_*=y_0'\}$ which is a circle tangent to $\R^*$. Let $T=\inf\{t >0 : \gamma'(t) \in \rho' \}$, which is a stopping time so that $\{\dist_{\H^*}(z_0',\gamma')\le r\} \subset \{ T < \infty\}$. From Case 2, we have
$$\P[T <\infty] \lesssim \frac{P_{y_0'}(y_0')}{P_{y_0'}(R)}.$$
Define $\gamma^T(t)=\gamma'(T+t)$. By the DMP of radial SLE and Case 1, we can see that
$$\P[\dist_{\H^*}(z_0',\gamma')\le r| \gamma'[0,T], T<\infty]=\P[\dist_{\H^*}(z_0',\gamma^T)\le r | \gamma'[0,T], T<\infty] \lesssim \frac{P_{y_0'}(r)}{P_{y_0'}(y_0')}.$$
Combining these two inequalities completes the proof of Case 3.
\end{proof}

The following one-point estimate for $\D$-domains will be more useful for us.
\begin{lemma}[One-point estimate for $\D$-domains]\label{lem: one point est disc}
Let $\gamma$ be a radial SLE$_{\kappa}$ curve in a $\D$ domain $D$ from a prime end $w_0$ to $0$. Let $z_0 \in \overline{\D}$, $y_0=1-|z_0|$, and $0 < r < R < |z_0|$. Assume that $B(z_0,R)\cap\D \subset D$ and $w_0$ is not a prime end of $B(z_0,R)\cap\D$. Let $\rho=\{ z \in D: |z-z_0|=R \}$, and $\eta=\{ z \in D : |z-z_0|=r \}$. Then
$$\P[\dist(z_0, \gamma )\le r ] \lesssim \frac{P_{y_0}(r)}{P_{y_0}(R)}. $$
\end{lemma}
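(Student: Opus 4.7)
The approach is to transfer the $\H^*$-domain one-point estimate (Lemma \ref{lem: one point estimate}) to the disc via the covering map $\phi = (e^{2i})^{-1}$ together with Koebe's distortion theorem (Lemma \ref{lem: Growth thm est}). Let $H = \phi(D \sem \{0\})$, let $w_0'$ be the prime end of $H$ corresponding to $w_0$, and let $\gamma' = \phi(\gamma)$ be the resulting radial SLE$_\kappa$ curve in $H$ from $w_0'$ to $\infty$. Pick a single-valued branch of $\phi$ on $\C$ slit along the ray from $0$ opposite $z_0$, so that $\phi$ is univalent on the open ball $B(z_0, |z_0|)$. Writing $z_0' = \phi(z_0)$, one has $|\phi'(z_0)| = 1/(2|z_0|)$ and $y_0' := \Imm z_0' = -\tfrac{1}{2} \log |z_0| \ge y_0/2$, with $y_0'\asymp y_0$ when $y_0\le 1/2$.

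I would mimic the three-case split in the proof of Lemma \ref{lem: one point estimate}. In the \emph{far case} $y_0 > R$ we have $B(z_0, R) \subset \D \sem \{0\}$; apply Lemma \ref{lem: Growth thm est} with $M = |z_0|$ to obtain $\til R < 1/2 < \pi/2$ and $\til r$ with $\til r/\til R \asymp r/R$, and the inclusions $\phi(B(z_0,r))\subset B_{\H^*}(z_0',\til r)$ and $B_{\H^*}(z_0', \til R) \subset \phi(B(z_0, R)) \subset H$. A short case analysis (separating $|z_0| \ge 1/2$ from $|z_0| \le 1/2$) gives $\til R \le y_0'$, so Lemma \ref{lem: one point estimate} applied in $H$ yields
$$\P[\dist_{\H^*}(z_0', \gamma') \le \til r] \lesssim \frac{P_{y_0'}(\til r)}{P_{y_0'}(\til R)} \asymp (r/R)^{2-d} = P_{y_0}(r)/P_{y_0}(R),$$
while the first Koebe inclusion shows $\{\dist(z_0, \gamma) \le r\} \subset \{\dist_{\H^*}(z_0', \gamma') \le \til r\}$. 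The sub-case $r \ge R/7$, where Koebe distortion fails, is absorbed by the trivial bound $\P[\cdot] \le 1 \lesssim (r/R)^{2-d}$.

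In the \emph{close case} $y_0 < r$ the ball $B(z_0, R)$ leaves $\D$ and direct lifting breaks down, so I would instead apply the boundary estimate Lemma \ref{lem: boundary 2}. Take $\rho = \partial B(z_0, R) \cap D$ and $\eta = \partial B(z_0, r) \cap D$ as disjoint crosscuts of $D$ (selecting subcrosscuts via Lemma \ref{lem: first subcrosscut} if necessary). The component $D(\rho; \eta) = B(z_0, R) \cap D$ is not a neighborhood of $0$ (since $R < |z_0|$) nor of $w_0$ (by hypothesis), and $\{\dist(z_0, \gamma) \le r\} \subset \{\gamma \cap \eta \ne \emptyset\}$. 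Pushing forward by $\phi$, the images $\phi(\rho)$ and $\phi(\eta)$ in $H$ are, by Koebe, essentially semicircles of radii $\asymp R$ and $\asymp r$ around the boundary point $\phi(z_0/|z_0|)\in\R^*$, so the Teichm\"uller-type input that underlies Lemma \ref{lem: boundary 2} (cf.\ Lemma \ref{lem: extremal distance hull}) yields $d_D(\rho,\eta)=d_H(\phi(\rho),\phi(\eta)) \gtrsim \pi^{-1}\log(R/r)$, and hence $\P[\dist(z_0,\gamma)\le r] \lesssim (r/R)^\alpha = P_{y_0}(r)/P_{y_0}(R)$. The \emph{middle case} $r \le y_0 \le R$ then follows by a stopping-time/DMP argument as in Lemma \ref{lem: one point estimate}: let $T = \inf\{t : \gamma(t) \in \partial B(z_0, y_0) \cap D\}$; the close case (applied with $r$ replaced by $y_0$) gives $\P[T < \infty] \lesssim (y_0/R)^\alpha$, and conditionally on $\gamma[0,T]$ with $T<\infty$ the far case applied to $\gamma^T$ in the complementary domain gives $\P[\dist(z_0,\gamma)\le r\mid\gamma[0,T],T<\infty]\lesssim (r/y_0)^{2-d}$; multiplying and invoking the definition of $P_{y_0}$ gives the claim. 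The principal obstacle is the close case: obtaining the sharp exponent $\alpha$ (rather than the $\alpha/2$ that a naive extremal-distance computation in $\C$ between two full circles would give) requires the extremal length to be computed in a half-plane-type domain, which is the role played by the $\H^*$ lift.
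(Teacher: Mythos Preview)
Your approach is correct but takes a longer route than the paper's. The paper lifts once via $\phi=(e^{2i})^{-1}$ and applies Lemma~\ref{lem: one point estimate} in $\H^*$ \emph{directly}, without re-running any case split at the $\D$ level: with $\til r,\til R$ defined via Lemma~\ref{lem: Growth thm est} on $\Omega=B(z_0,R)$ (so $\til R\le R/(2|z_0|)<1/2<\pi/2$), one immediately gets $\P[\dist(z_0,\gamma)\le r]\lesssim P_{y_0'}(\til r)/P_{y_0'}(\til R)$. The only new work is then the comparison $P_{y_0'}(\til r)/P_{y_0'}(\til R)\asymp P_{y_0}(r)/P_{y_0}(R)$, which the paper handles by a short dichotomy: if $y_0\le 1/4$ then $y_0'\asymp y_0$, $|\phi'(z_0)|\asymp 1$, $\til r\asymp r$, $\til R\asymp R$; if $y_0\ge 1/4$ then $R,\til R\lesssim 1\lesssim y_0,y_0'$ and both ratios collapse to $(r/R)^{2-d}$.

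In other words, the far/close/middle trichotomy you reproduce is already packaged inside Lemma~\ref{lem: one point estimate}, so redoing it in $\D$ duplicates effort. Your version is sound --- the extremal-distance argument for the close case and the stopping-time splice for the middle case both go through --- but it buys you extra bookkeeping (verifying $\til R\le y_0'$ in the far case, handling crosscut components in the close case) that the paper sidesteps by invoking the $\H^*$ lemma wholesale and pushing all the case analysis into the comparison of $P$-ratios.
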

\begin{proof}
Since $\P[\dist(z_0, \gamma )\le r ] \le 1$, we may assume that $r<R/7$.  Let $\gamma',H,w_0',z_0'$ be the image of $\gamma,D,w_0,z_0$ under $(e^{2i})^{-1}$. So $\gamma'$ is a radial SLE$_\kappa$ curve in the $\H^*$-domain $H$ from its prime end $w_0'$ to $\infty$. Let $y_0'=\Imm z_0'=\frac 12\ln(\frac 1{|z_0|})=\frac 12\ln(\frac 1{1-y_0})$.  Since $R<|z_0|$, $(e^{2i})^{-1}$ restricted to $B(z_0,R)$ may be lifted to a conformal map $\phi$ into $\C$, i.e., $e^{2i}\circ \phi$ is identity on $B(z_0,R)$. Let $\til r$ and $\til R$ be as defined in Lemma \ref{lem: Growth thm est} with respect to $\Omega=B(z_0,R)$ and such $\phi$. Then $B_{\H^*}(z_0',\til R)\subset H$ and $w_0'$ is not a prime end of $B_{\H^*}(z_0',\til R)$.  We calculate that $\til R\le R/(2|z_0|)<1/2$. 
By Lemma \ref{lem: one point estimate}, we have
$$\P[ \dist(z_0, \gamma )\le r  ]\le \P[\dist(z_0', \gamma' )\le \til r ]\lesssim \frac{P_{y_0'}(\til r)}{P_{y_0'}(\til R)}
.$$
To finish the proof, we need to show that $\frac{P_{y_0'}( \til r)}{P_{y_0'}(  \til R)}\asymp \frac{P_{y_0}(  r)}{P_{y_0}(  R)}$. If $y_0\le 1/4$, then $y_0'=\frac 12\ln(\frac 1{1-y_0})\asymp y_0$, $|\phi'(z_0)|=1/(2|z_0|)\asymp 1$, $\til R\asymp R$, and $\til r\asymp r$. So we get the desired estimate. If $y_0\ge 1/4$, then $y_0'\ge \frac 12\ln(4/3)$. Since $R,\til R\lesssim 1\lesssim y_0,\til y_0$, we get $\frac{P_{y_0'}(  \til r)}{P_{y_0'}(  \til R)}\asymp (\frac{\til r}{\til R})^{2-d}\asymp (\frac{  r}{  R})^{2-d}\asymp \frac{P_{y_0}(  r)}{P_{y_0}(  R)}$, as desired.
\end{proof}

\section{Components of crosscuts}
Before we state the main theorem of this section, we will introduce the notation to be used. Let $(\mathcal{F}_t)$ be the right continuous filtration determined by the radial SLE curve $\gamma$. For any set $S \subset \overline{\D}$, let $\tau_S=\inf\{ t \geq 0: \gamma(t) \in S \}$. Define $D_t=\D(\gamma[0,t])$. For any stopping time $\tau$, define $\gamma^{\tau}(t)=\gamma(\tau+t)$, $t\ge 0$. Recall that, conditional on $\mathcal{F}_\tau$, $\gamma^\tau$ is a radial SLE$_\kappa$ curve in $D_\tau$.

\begin{theorem}\label{thm: ordered Green's function}
 Let $\gamma$ be a radial SLE$_{\kappa}$ curve in $\D$ from $1$ to $0$. Suppose that $z_0, z_1, \dots, z_m \in \overline{\D}\backslash \{0,1\}$. For each $z_j$, let $0 < r_j \leq R_j$, and define the circles $\ha{\xi}_j=\{ |z-z_j|= R_j \}$ and $\xi_j=\{|z-z_j|=r_j\}$. Assume that neither $0$ nor $1$ are contained in $\D^*( \ha{\xi}_j;z_j )$ for each $j$, and that $\overline{\D^*( \ha{\xi}_j )} \cap \overline{\D^*( \ha{\xi}_k )} =\emptyset$ for $j\neq k$.
Let $r_0' \in (0,r_0)$ and define $\xi_0'=\{ |z-z_0|=r_0' \}$. Define the event
$$E =\{  \tau_{\xi_0} <  \tau_{\ha{\xi}_1}  \leq \tau_{\xi_1}  < \cdots  < \tau_{\ha{\xi}_m}  \leq \tau_{\xi_m} < \tau_{\xi_0'} < \infty    \}.$$ If $y_j=1-|z_j|$, then for some constant $C$,
$$ \P[E | \mathcal{F}_{\tau_{\xi_0}}] \leq C^m \( \frac{r_0}{R_0} \)^{\alpha/4} \prod_{j=1}^m \frac{ P_{y_j}(r_j) }{P_{y_j}(R_j)}.$$
\end{theorem}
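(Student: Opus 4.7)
I would prove this bound by decomposing the event $E$ at the successive stopping times $\tau_{\ha\xi_1},\tau_{\xi_1},\ldots,\tau_{\ha\xi_m},\tau_{\xi_m},\tau_{\xi_0'}$ and applying the domain Markov property together with the estimates from the previous section. Writing $E^{\mathrm{in}}=\{\tau_{\xi_0}<\tau_{\ha\xi_1}\le\tau_{\xi_1}<\cdots\le\tau_{\xi_m}\}$, the tower property gives
\[
\P[E\mid\mathcal{F}_{\tau_{\xi_0}}]=\E\!\left[\mathbf{1}_{E^{\mathrm{in}}}\,\P[\tau_{\xi_0'}<\infty\mid\mathcal{F}_{\tau_{\xi_m}}]\,\middle|\,\mathcal{F}_{\tau_{\xi_0}}\right],
\]
and I bound the two factors separately.

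For the return factor, by DMP the continuation $\gamma^{\tau_{\xi_m}}$ is a radial SLE$_\kappa$ in $D_{\tau_{\xi_m}}$ starting at $\gamma(\tau_{\xi_m})$. By the disjointness hypothesis $\overline{\D^*(\ha\xi_m)}\cap\overline{\D^*(\ha\xi_0)}=\emptyset$, this starting prime end lies outside $\overline{B(z_0,R_0)}$ in the relevant component of $D_{\tau_{\xi_m}}$, so to reach $\xi_0'$ the curve must cross the annulus $\{r_0<|z-z_0|<R_0\}$ inward. Applying Lemma~\ref{lem: final boundary est} in the enlarged domain $\til D=\C$ with $\til\rho=\ha\xi_0$ and $\til\eta=\xi_0$ (whose extremal distance in that annulus is $\tfrac{1}{2\pi}\log(R_0/r_0)$) yields $\P[\tau_{\xi_0'}<\infty\mid\mathcal{F}_{\tau_{\xi_m}}]\lesssim (r_0/R_0)^{\alpha/2}\le (r_0/R_0)^{\alpha/4}$. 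The base case $m=0$ is handled analogously, with the same annular estimate applied to the continuation after $\tau_{\xi_0}$, splitting on whether the curve first exits $B(z_0,R_0)$ before hitting $\xi_0'$ or not.

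For the inner factor $\P[E^{\mathrm{in}}\mid\mathcal{F}_{\tau_{\xi_0}}]$, I proceed by induction on $m$. At each $\tau_{\ha\xi_j}$, DMP yields a radial SLE$_\kappa$ in $D_{\tau_{\ha\xi_j}}$ starting on $\ha\xi_j$, and Lemma~\ref{lem: one point est disc} (applied with $R\nearrow R_j$, so that $\gamma(\tau_{\ha\xi_j})\notin\overline{B(z_j,R)}$ and hence is not a prime end of $B(z_j,R)\cap D_{\tau_{\ha\xi_j}}$) gives $\P[\tau_{\xi_j}<\infty\mid\mathcal{F}_{\tau_{\ha\xi_j}}]\lesssim P_{y_j}(r_j)/P_{y_j}(R_j)$. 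The intermediate crossings $\tau_{\xi_{j-1}}\mapsto\tau_{\ha\xi_j}$ (for $j\ge 2$) and the initial crossing $\tau_{\xi_0}\mapsto\tau_{\ha\xi_1}$ are bounded trivially by $1$. Chaining through the tower property produces the product $C^m\prod_{j=1}^m P_{y_j}(r_j)/P_{y_j}(R_j)$, and combining with the return estimate gives the claimed bound.

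The main technical obstacle is verifying the hypotheses of Lemma~\ref{lem: final boundary est} inside the random domain $D_{\tau_{\xi_m}}$, whose boundary contains the complicated SLE trace. The comparison principle for extremal length resolves this by letting us enlarge to $\til D=\C$ and use the simple concentric-circle geometry of $\ha\xi_0$ and $\xi_0$; the required lower bound on the extremal distance then holds regardless of the shape of $D_{\tau_{\xi_m}}$. A secondary subtlety is that Lemma~\ref{lem: one point est disc} formally requires $R<|z_j|$, which forces passing through the $\H^*$-version (Lemma~\ref{lem: one point estimate}) when $R_j\ge|z_j|$.
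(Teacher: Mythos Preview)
Your argument for the ``return factor'' contains a genuine gap. You apply Lemma~\ref{lem: final boundary est} at time $\tau_{\xi_m}$ with $\til\rho=\ha\xi_0$ and $\til\eta=\xi_0$, but that lemma requires a crosscut $\rho\subset\ha\xi_0$ in the random domain $D_{\tau_{\xi_m}}$ such that $D_{\tau_{\xi_m}}^*(\rho;0)$ (the side containing $\xi_0'$) is \emph{not} a neighborhood of the prime end $\gamma(\tau_{\xi_m})$. You justify this by noting that $\gamma(\tau_{\xi_m})\in\xi_m$ lies \emph{geometrically} outside $\overline{B(z_0,R_0)}$. That Euclidean fact is not enough: the hypothesis of the lemma is a \emph{topological} separation statement in $D_{\tau_{\xi_m}}$. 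After the curve has entered and exited $B(z_0,R_0)$ (as it must, having already visited $\xi_0$ and then $\ha\xi_1$), the first subcrosscut $\lambda$ of $\ha\xi_0$ separating $\xi_0'$ from $0$ may well have $D_{\tau_{\xi_m}}(\lambda;\xi_0')$ extending outside $B(z_0,R_0)$ and containing the tip $\gamma(\tau_{\xi_m})$. Concretely, if the boundary of the connected component of $D_{\tau_{\xi_m}}\cap B(z_0,R_0)$ containing $\xi_0'$ meets $\ha\xi_0$ in more than one arc, then $D_{\tau_{\xi_m}}(\lambda;\xi_0')$ picks up exterior pockets across the other arcs, and the tip can sit in one of those pockets. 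In that configuration no subcrosscut of $\ha\xi_0$ simultaneously separates $\xi_0'$ from $0$ and has the tip on the $0$-side, so Lemma~\ref{lem: final boundary est} cannot be invoked at time $\tau_{\xi_m}$ at all.

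This is exactly the obstruction the paper's proof is built to handle. The paper inserts an intermediate circle $\rho$ at radius $\sqrt{R_0r_0}$, tracks for each $j$ whether $\D^*(\xi_j)$ lies on the $0$-side or the $\xi_0'$-side of the first subcrosscut $\rho_t$, and splits $E$ into the events $A_{(0,1)},A_{(j,j)},A_{(j,j+1)},A_{(m,m+1)}$. Your scenario is only the easy case $A_{(m,m+1)}$; in the remaining cases the boundary estimate must be applied at a \emph{different} stopping time (for $A_{(j,j)}$ at the random time $\sigma_j$ when the side switches, with a further dyadic decomposition $F_k$), and sometimes in the outward direction $\rho\to\ha\xi_0$ rather than inward $\xi_0\to\rho$. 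This two-sided use of the annulus is why the exponent is $\alpha/4$ rather than the $\alpha/2$ your argument produces; the stronger exponent you obtain is a symptom of the missing cases, not a bonus.
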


\no{\bf Remark.} The proof is similar to the proof of \cite[Theorem 3.1]{Rez Zhan upper bound}, but for completeness we include complete details.

\begin{proof}
Consider the discs $\xi$ which intersect the boundary. We know that the probability that $\gamma$ hits the points in $ \xi \cap \partial \D$ is equal to $0$, and so $\tau_{\xi}=\tau_{\xi \cap \D}$ a.s. Therefore, we can assume that each $\xi$ is either a Jordan curve or a crosscut in $\D$.  For each $j=0,1,\dots,m$, let $\tau_j=\tau_{\xi_j}$ and $\ha{\tau}_j=\tau_{\ha{\xi}_j}$, and define $\tau_{m+1}=\tau_{\xi_0'}$.

By the Domain Markov Property of SLE and Lemma \ref{lem: one point est disc}, we see that, for some constant $C$,
\begin{equation}
\P[\tau_j < \infty | \mathcal{F}_{\ha{\tau}_j}] \leq C \frac{P_{y_j}(r_j)}{P_{y_j}(R_j)}.
\end{equation}\label{eq: P of tau_j}
Combining these together gives
$$\P[E| \mathcal{F}_{\tau_0}]  \leq C^m \prod_{j=1}^m \frac{P_{y_j}(r_j)}{P_{y_j}(R_j)}.$$ If $r_0=R_0,$ then we are done. Suppose that $R_0 >r_0$.  Create a new arc $\rho=\{ z \in \D : |z-z_0|=\sqrt{R_0 r_0} \}$, so that $\rho$ is either a Jordan curve or a crosscut in $\D$ between $\xi_0$ and $\ha{\xi}_0$. Therefore, we know that
\begin{equation}\label{eq: rho extremal distances}
d_{\D}(\rho,\xi_0), d_{\D}(\rho, \ha{\xi}_0) \geq \frac{\ln(R_0/r_0)}{4\pi}.
\end{equation}

Note that $\rho$ separates $\xi_0$ from $0$. In the following argument, we will need to keep track of how the $\D$-domains $D_t$ are divided by $\rho$ at any particular time. Let $T=\inf\{ t \geq 0: \xi_0' \not\subset D_t\}$. Then if $\tau_0 \leq t <T$, $\xi_0'$ is a connected subset of $D_t$. In this case, since the starting point $1$ is outside of $\ha{\xi}_0$ and $\gamma$ intesercts $\xi_0$, it must be that $\gamma$ intersects $\rho$, and so $\rho$ intersects $\partial D_t$. By Lemma \ref{lem: first subcrosscut}, there is a first subcrosscut of $\rho$ in $D_t$, to be denoted $\rho_t$, which separates $\xi_0'$ from $0$ for each $\tau_0 \leq t <T$.

Now, we need to break the event $E$ into several cases based on the behavior of the curve $\gamma$ as it intersects the circles in the correct order. Let $I=\{ (j,j+1) : 0 \leq j \leq m \} \cup \{ (j,j): 1 \leq j \leq m \}$, and define a sequence of events $\{A_i : i \in I\}$  by
\begin{enumerate}
\item $A_{(0,1)} = \{ T > \tau_0 \} \cap \{ \D^*(\xi_1) \subset D_{\tau_0}^*(\rho_{\tau_0}) \}$

\item $A_{(j,j)} = \{ T > \tau_j \} \cap \{ \D^*(\xi_j) \subset D_{\tau_{j-1}}(\rho_{\tau_{j-1}}) \} \cap \{ \D^*(\xi_j) \subset D_{\tau_j}^*( \rho_{\tau_j} ) \}$, $1 \leq j \leq m$.

\item $A_{(j,j+1)} = \{ T > \tau_j \} \cap \{ \D^*(\xi_j) \subset D_{\tau_j} (\rho_{\tau_j}) \} \cap \{ \D^*(\xi_{j+1}) \subset D_{\tau_j}^*(\rho_{\tau_j}) \}$, $1 \leq j \leq m-1$.

\item $A_{(m,m+1)} = \{ T > \tau_m\} \cap \{ \D^*(\xi_m) \subset D_{\tau_m}(\rho_{\tau_m}) \}$.
\end{enumerate}
Observe that for each $j$, the events $A_{(j,j)}, A_{(j,j+1)}$ are $\mathcal{F}_{\tau_j}$ measurable. We claim  that
\begin{equation}\label{eq: E in Ais}
E \subset \bigcup_{i \in I} A_i.
\end{equation}
To see this, observe that $A_{(m,m+1)}$ is the event that at time $\tau_m$, $\xi_m$ lies outside of $\rho_{\tau_m}$, relative to $0$, i.e., $\xi_m$ lies in the same connected component as $0$ of $D_{\tau_m}\sem \rho_{\tau_m}$. If that does not happen, then $\xi_m$  lies inside of $\rho_{\tau_m}$ at time $\tau_m$. Suppose futher that $A_{(m,m)}$ does not happen, which is the event that at time $\tau_{m-1}$, $\xi_m$ lies outside of $\rho_{\tau_{m-1}}$, but at time $\tau_m$, $\xi_m$ lies inside of $\rho_{\tau_m}$. Then it must be that $\xi_m$ lies inside of $\rho_{\tau_{m-1}}$ at $\tau_{m-1}$. Proceeding along this way inductively proves (\ref{eq: E in Ais}). 
Now it will suffice to show that
\begin{equation}\label{eq: prob of E and Ai}
\P[E \cap A_i | \mathcal{F}_{\tau_0}] \leq C^m \( \frac{r_0}{R_0} \)^{\alpha/4} \prod_{j=1}^m \frac{P_{y_j}(r_j)}{P_{y_j}(R_j)}
\end{equation}
for each $i \in I$.
We will break it down to the four cases $i=(m,m+1), (j,j), (j,j+1),$ and  $(0,1)$. In all of these cases, we will use the convention $\gamma^T(t)=\gamma(T+t)$ for any $T,t \geq 0$.

\underline{Case $(0,1)$}. Suppose that $A_{(0,1)}$ occurs, and that $\tau_0 < \ha{\tau}_1$. See Figure \ref{A(0,1)(j,j+1)}. Then we have $\ha{\xi}_1 \subset D^*_{\tau_0}(\rho_{\tau_0})$.
Also, note that $\rho$ disconnects $\ha{\xi}_1$ from $\xi'_0$ in $\D$, and $\rho$ must intersect $\partial D_{\tau_0}$. By Lemma \ref{lem: first subcrosscut}, there is a subcrosscut $\rho'_{\tau_0}$ of $\rho$ which is first to separate $\ha{\xi}_1$ from $\xi'_0$ in the domain $D_{\tau_0}$. Since both $\ha{\xi}_1$ and $\xi'_0$ lie in $D^*_{\tau_0}(\rho_{\tau_0})$, so does $\rho'_{\tau_0}$. Note that this implies that $\rho'_{\tau_0} \neq \rho_{\tau_0}$, and that $D^*_{\tau_0}(\rho'_{\tau_0}) \subset D^*_{\tau_0}(\rho_{\tau_0})$. Since $\rho_{\tau_0}$ was defined to be the first subcrosscut of $\rho$ in $D_{\tau_0}$ that disconnects $\xi'_0$ from $0$, and since $D^*_{\tau_0}(\rho'_{\tau_0})$ is contained in the domain determined by $\rho_{\tau_0}$, it cannot be that $\rho'_{\tau_0}$ disconnects $\xi'_0$ from $0$. Therefore, we conclude that $\xi'_0 \subset D_{\tau_0}(\rho'_{\tau_0})$ and $\ha{\xi}_1 \subset D^*_{\tau_0}(\rho'_{\tau_0})$.

Observe that $\D^*(\xi_0)$ is a connected subset of $D_{\tau_0}\backslash \rho'_{\tau_0}$, and contains $\xi'_0$ and a curve which approaches $\gamma(\tau_0) \in \xi_0$. Therefore, $$D_{\tau_0}(\rho'_{\tau_0}; \gamma(\tau_0)) = D_{\tau_0}(\rho'_{\tau_0}; \xi'_0) = D_{\tau_0}(\rho'_{\tau_0}).$$ It follows that $D_{\tau_0}(\rho'_{\tau_0}; \ha{\xi}_1) = D^*_{\tau_0}(\rho'_{\tau_0})$ is not a neighborhood of $\gamma(\tau_0)=\gamma^{\tau_0}(0)$, where $\gamma^{\tau_0}$  (conditioned on $\mathcal{F}_{\tau_0}$) is a radial SLE$_{\kappa}$ curve in the $\D$-domain $D_{\tau_0}$. Since $\tau_0 < \ha{\tau}_1$, the event $\{ \ha{\tau}_1 < \infty \}$ implies that the conditioned SLE curve $\gamma^{\tau_0}$ visits $\ha{\xi}_1$. Since $\ha{\xi}_0$ disconnects $\ha{\xi}_1$ from $\rho'_{\tau_0} \subset \rho$ in $\D$, and $\ha{\xi}_0$ intersects $\partial D_{\tau_0}$, we can apply Lemma \ref{lem: final boundary est} to conclude that
$$\P[ \ha{\tau}_1 < \infty | \mathcal{F}_{\tau_0}, A_{(0,1)}, \tau_0 < \ha{\tau}_1 ] \lesssim e^{-\alpha d_{\D}(\rho, \ha{\xi}_0)} \le \( \frac{r_0}{R_0} \)^{\alpha/4}.$$
Note that the second inequality follows from (\ref{eq: rho extremal distances}). Combining the above inequality with inequality (\ref{eq: P of tau_j}) proves that inequality (\ref{eq: prob of E and Ai}) holds for the case $i=(0,1)$.

\underline{Case $(j,j+1)$}, for $1 \leq j \leq m-1$. Suppose that $A_{(j,j+1)}$ occurs, and $\tau_{j} < \ha{\tau}_{j+1}$. See Figure \ref{A(0,1)(j,j+1)}. By the same argument used in the case for $A_{(0,1)}$, we can conclude that there exists a subcrosscut of $\rho$, which we will call $\rho'_{\tau_j}$, which disconnects $\ha{\xi}_{j+1}$ from $\xi'_0$ in $D_{\tau_j}$. It follows that $D^*_{\tau_j}(\rho'_{\tau_j}) \subset D^*_{\tau_j}(\rho_{\tau_j})$, and then we can conclude that $\xi'_0 \subset D_{\tau_j}(\rho'_{\tau_j})$ and $\ha{\xi}_{j+1} \subset D^*_{\tau_j}(\rho'_{\tau_j})$. Since $\D^*(\xi_j)$ is a connected subset of $D_{\tau_j}(\rho_{\tau_j})$ and contains a curve approaching $\gamma(\tau_j) \in \xi_j,$ we can see that $D_{\tau_j}(\rho_{\tau_j}; \gamma(\tau_j))=D_{\tau_j}(\rho_{\tau_j}; \D^*(\xi_j)) = D_{\tau_j}(\rho_{\tau_j})$. Therefore, $D^*_{\tau_j}(\rho'_{\tau_j}) \subset D^*_{\tau_j}(\rho_{\tau_j})$ is not a neighborhood of $\gamma(\tau_j)=\gamma^{\tau_j}(0)$.

Since $\tau_j < \ha{\tau}_{j+1}$, the event $\{ \ha{\tau}_{j+1} < \infty \}$ implies that $\gamma^{\tau_j}$, the radial SLE$_{\kappa}$ curve in $D_{\tau_j}$ after conditioning on $\mathcal{F}_{\tau_j}$, visits $\ha{\xi}_{j+1}$. Since $\ha{\xi}_0$ disconnects $\ha{\xi}_{j+1}$ from $\rho$, and therefore from $\rho'_{\tau_j}$, in $\D$, Lemma \ref{lem: final boundary est} implies that
$$\P[\ha{\tau}_{j+1} < \infty | \mathcal{F}_{\tau_j}, A_{(j,j+1)}, \tau_j < \ha{\tau}_{j+1}] \lesssim e^{-\alpha \pi d_{\D}(\rho, \ha{\xi}_0)} \leq \( \frac{r_0}{R_0} \)^{\alpha/4}.$$
As in the above case, this proves inequality (\ref{eq: prob of E and Ai}) for the case $i=(j,j+1)$ when $1 \leq j \leq m-1$.

\begin{figure}
	\includegraphics[width=0.4\textwidth]{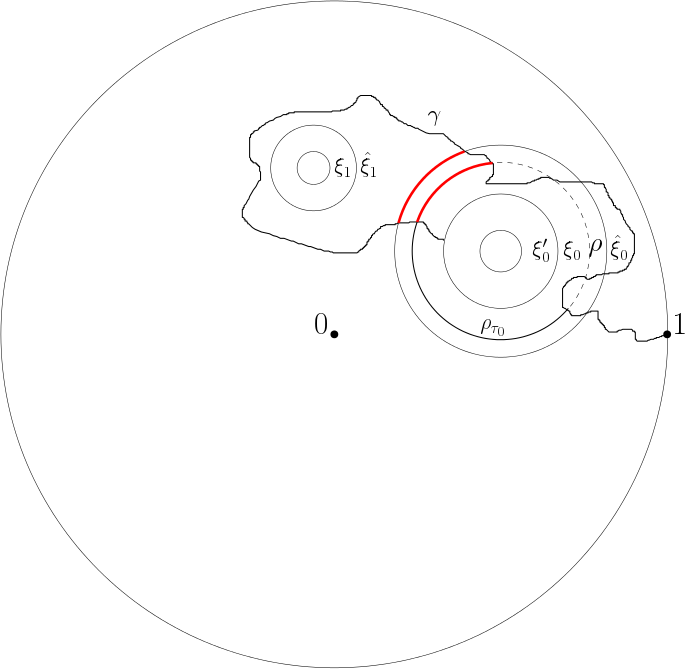}%
	\hfill
	\includegraphics[width=0.4\textwidth]{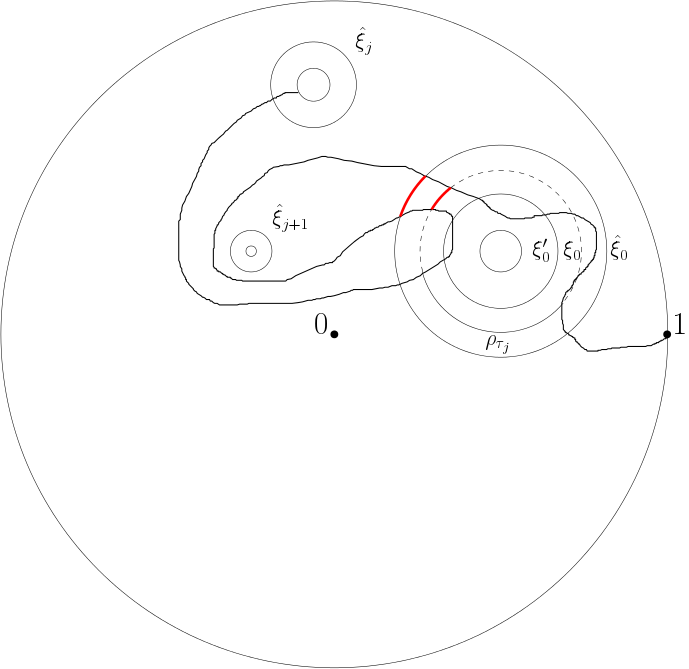}%
	\hfill
	\caption{The two pictures above illustrate the events $A_{(0,1)}$ (left) and $A_{(j,j+1)}$ (right). In each picture, the pair of arcs that contribute the boundary estimate are thickened and colored red.}
	\label{A(0,1)(j,j+1)}
\end{figure}

\underline{Case $(m,m+1)$}. Suppose that $\tau_m < \tau_{m+1}$, and that $A_{(m,m+1)}$ occurs. The set $\D^*(\xi_m)$ is connected and contained in $D_{\tau_m} \backslash \rho_{\tau_m}$, and also $\gamma(\tau_m) \in \xi_m$. This implies that $$D_{\tau_m}(\rho_{\tau_m}; \gamma(\tau_m)) = D_{\tau_m}(\rho_{\tau_m}; \D^*(\xi_m)) =D_{\tau_m}(\rho_{\tau_m}).$$ Therefore, $D^*_{\tau_m}(\rho_{\tau_m})$ is not a neighborhood of $\gamma^{\tau_m}(0)=\gamma(\tau_m)$ in $D_{\tau_m}$. Since we are assuming that $\tau_m < \tau_{m+1}$, the event $\{ \tau_{m+1} < \infty\}$ implies that the curve $\gamma^{\tau_m}$, which after conditioning on $\mathcal{F}_{\tau_m}$ is a radial SLE$_{\kappa}$ in $D_{\tau_m}$ from $\gamma(\tau_m)$ to $0$, must visit $\xi'_0 \subset D^*_{\tau_m}(\rho_{\tau_m})$. Since $\xi_0$ disconnects $\xi_0'$ from $\rho$ in $\D$ and intersects $\partial D_{\tau_m}$, we can apply Lemma \ref{lem: final boundary est} to show that
$$\P[\tau_{m+1} < \infty | \mathcal{F}_{\tau_m}, A_{(m,m+1)}, \tau_m < \tau_{m+1}] \lesssim e^{-\alpha \pi d_{\D}(\xi_0,\rho)} \leq  \( \frac{r_0}{R_0} \)^{\alpha/4}.$$

\begin{figure}
	\includegraphics[width=0.4\textwidth]{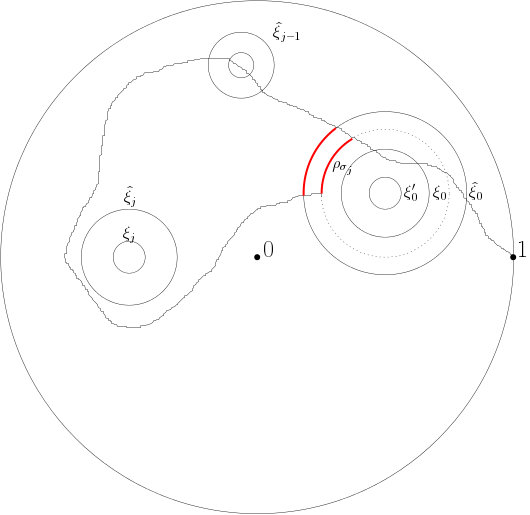}%
	\hfill
	\includegraphics[width=0.4\textwidth]{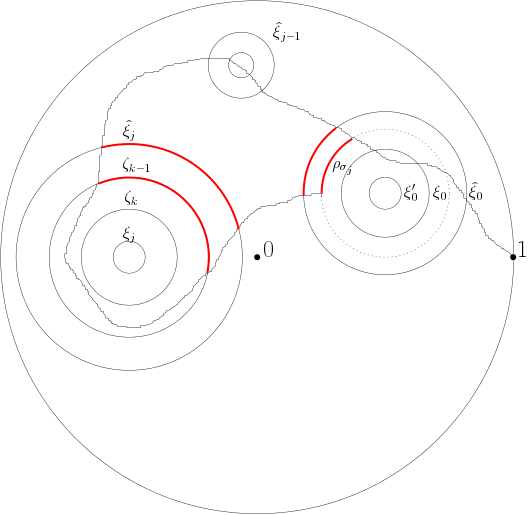}%
	\hfill
	\caption{The left pictures illustrates the subcase $F_<$ of  Case $(j,j)$. The right picture illustrates the subcase $F_k\subset F_{\ge}$ of Case $(j,j)$.  The arcs that contribute the boundary estimate in each picture are thickened and colored red.}
	\label{A(m,m+1)(j,j)}
\end{figure}

\underline{Case $(j,j)$}, for $1 \leq j \leq m$. We now prove inequality (\ref{eq: prob of E and Ai}) for $A_{(j,j)}$.  See Figure \ref{A(m,m+1)(j,j)}. Define $\sigma_j = \inf \{ t \geq \tau_{j-1}: \D^*(\xi_j) \subset D^*_t(\rho_t) \}$. This can be seen as the first time after $\tau_{j-1}$ that the SLE curve $\gamma$ hits the crosscut $\rho$ on the ``correct" side of $\ha{\xi}_j$, i.e. $\gamma$ cuts the disc off from $0$. There are a few observations to make about $\sigma_j$ which follow from \cite[Lemma 2.2]{Rez Zhan upper bound}.
First, $\sigma_j$ is an $\mathcal{F}_t$-stopping time. If $\sigma_j < \infty$, then $\D^*(\xi_j) \subset D^*_{\sigma_j}(\rho_{\sigma_j})$. If the event $A_{(j,j)}$ occurs, then so does $\{\tau_{j-1} < \sigma_j < \tau_j \}$. Finally, if $\tau_{j-1} < \sigma_j < \infty$, then it must be that $\gamma(\sigma_j)$ is an endpoint of $\rho_{\sigma_j}$. This implies that $D^*_{\sigma_j}(\rho_{\sigma_j})$ is neither a neighborhood of $\gamma(\sigma_j)$ nor of $0$.

We define two events which separate the event $A_{(j,j)}$. Define
$$F_{<}=\{\sigma_j < \ha{\tau}_j  \}, \text{ and } F_{\geq} = \{ \tau_j > \sigma_j \geq \ha{\tau}_j \}.$$ Notice that $A_{(j,j)} \subset F_< \cup F_{\geq}$, so if we can prove (\ref{eq: prob of E and Ai}) for both $F_<$ and $F_{\geq}$ instead of $A_{(j,j)}$, the case will be proven.

First, assume that $F_<$ happens. Then $\D^*(\ha{\xi}_j) \cup \ha{\xi}_j$ is a connected subset of $(\D \backslash \gamma[0,\rho_{\sigma_j}])\backslash \rho$ that contains $\D^*(\xi_j)$, and so $\ha{\xi}_j \subset D^*_{\sigma_j}(\rho_{\sigma_j} ; \D^*(\xi_j))=D^*_{\sigma_j}(\rho_{\sigma_j})$. Since $\ha{\xi}_0$ disconnects $\rho$ from $\ha{\xi}_j$ in $\D$, Lemma \ref{lem: final boundary est} and (\ref{eq: rho extremal distances}) imply that
$$\P[ \ha{\tau}_j < \infty | \mathcal{F}_{\sigma_j}, F_< ] \lesssim e^{-\alpha \pi d_{\D}(\rho, \ha{\xi}_0)} \leq \( \frac{r_0}{R_0} \)^{\alpha/4}.$$
This implies that
$$\P[ \tau_j < \infty, F_< | \mathcal{F}_{\tau_{j-1}} ] \lesssim \( \frac{r_0}{R_)} \)^{\alpha/4} \frac{ P_{y_j}(r_j) }{P_{y_j}(R_j))}.$$

Next, we assume that $F_{\geq}$ occurs, which is the more difficult case. Define $N=\lceil \ln(R_j/r_j)  \rceil \in \N$, where $\lceil \cdot \rceil$ is the ceiling function. In the event $F_{\geq}$, the SLE curve passes through the outer circle $\ha{\xi}_j$, then hits the crosscut $\rho$ before returning to hit the inner circle $\xi_j$. What we need to do is divide the annulus $\{r_j \leq |z-z_j| < R_j\}$ into $N$ subannuli until we can identify the last subcircle to be crossed before the time $\sigma_j$.

Define the circles $\zeta_k = \{ |z-z_j| = \( R_j^{N-k}r_j^j \)^{1/N} \}$ for $0 \leq k \leq N$. Note that $\zeta_0=\ha{\xi}_j$, $\zeta_N= \xi_j$, and a higher index $k$ indicates that $\zeta_k$ is more deeply nested inside $\ha{\xi}_j$. Then $F_{\geq} \subset \bigcup_{k=1}^N F_k$, where
$$F_k=\{ \tau_{\zeta_{k-1}} \leq \sigma_j < \tau_{\xi_{\zeta_k}} \}.$$ If the event $F_k$ occurs, then $\zeta_k \subset D^*_{\sigma_j}(\rho_{\sigma_j})$. This is because $\D^*(\zeta_k) \cup \zeta_k$ is a connected subset of $\( \D \backslash \gamma[0,\sigma_j] \) \backslash \rho$ which contains $\zeta_k$, $\D^*(\xi_j)$, and $D^*_{\sigma_j}(\rho_{\sigma_j})$. By Lemma \ref{lem: final boundary est} and inequality (\ref{eq: rho extremal distances}), we conclude that
\begin{equation}\label{eq: F_{geq} 1}
\P[\tau_{\zeta_k}<\infty | \mathcal{F}_{\sigma_j},F_k] \lesssim e^{-\alpha \pi d_{\D}(\rho, \zeta_{k-1})} \leq e^{-\alpha \pi( d_{\D}(\rho, \ha{\xi}_0)+ d_{\D}(\zeta_0,\zeta_{k-1}) )}
\leq \( \frac{r_0}{R_0} \)^{\alpha/4} \( \frac{r_j}{R_j} \)^{\frac{\alpha(k-1))}{2N}}.
\end{equation}
By Lemma \ref{lem: one point est disc}, we get that
$$\P[ F_k | \mathcal{F}_{\tau_{j-1}}, \tau_{j-1} < \ha{\tau}_j ] \lesssim \frac{P_{y_j}\((R_j^{N-k+1}r^{k-1})^{1/N}\)}{P_{y_j}(R_j)},$$
$$\P[ \tau_j<\infty | \mathcal{F}_{\zeta_k} , F_k] \lesssim \frac{P_{y_j}(r_j)}{P_{y_j}\( (R_j^{N-k}r_j^k)^{1/N} \)}.$$
Combining the above three inequalities and the upper bound in Lemma \ref{lem: P_y bounds},
$$ \P[ \tau_j < \infty, F_k | \mathcal{F}_{\tau_{j-1}}, \tau_{j-1} < \ha{\tau}_j ] \lesssim \( \frac{r_0}{R_0} \)^{\alpha/4} \( \frac{r_j}{R_j} \)^{ \frac{\alpha(k-1)}{2N} } \( \frac{r_j}{R_j} \)^{-\alpha/N} \frac{P_{y_j}(r_j)}{P_{y_j}(R_j)}.$$
Since $F_{\geq} \subset \bigcup_{k=1}^N F_k$, adding up the above inequality yields
$$\P[ \tau_j < \infty, F_{\geq} | \mathcal{F}_{\tau_{j-1}}, \tau_{j-1} < \ha{\tau}_j ] \lesssim \( \frac{r_0}{R_0}\)^{\alpha/4} \frac{P_{y_j} (r_j)}{P_{y_j}(R_j)} \left[  \( \frac{r_j}{R_j} \)^{-\alpha/N} \frac{ 1-( r_j/R_j )^{\alpha/2} }{1- (r_j/R_j)^{\alpha/2N}} \right]. $$
By considering the cases $R_j /r_j\le e$ and $R_j /r_j > e$ separately, we see that the quantity inside the square bracket is bounded by the constant $\frac{e^{\alpha}}{1-e^{-\alpha/4}}$.
So we get
$$\P[ \tau_j < \infty, F_{\geq} | \mathcal{F}_{\tau_{j-1}}, \tau_{j-1} < \ha{\tau}_j ] \lesssim \( \frac{r_0}{R_0}\)^{\alpha/4} \frac{P_{y_j} (r_j)}{P_{y_j}(R_j)},$$
which is the same bound as that achieved for $F_{<}$. Combining these two inequalities gives
$$\P[\tau_j<\infty, A_{(j,j)} | \mathcal{F}_{\tau_{j-1}}, \tau_{j-1}<\ha{\tau}_j] \lesssim \( \frac{r_0}{R_0} \)^{-\alpha/4} \frac{P_{y_j}(r_j)}{P_{y_j}(R_j)},$$
which completes the proof of (\ref{eq: prob of E and Ai}) in the final case, and so the theorem follows.
\end{proof}

\section{Concentric circles}
Let $\Xi$ be a family of mutually disjoint circles in $\C$ with centers in $\overline{\D} \backslash \{ 0 \}$, none of which passes through or encloses $0$ or $1$. We can define a partial order on $\Xi$  by $\xi_1 < \xi_2$ if $\xi_2$ is enclosed by $\xi_1$. Note that the larger circle has a smaller radius than the larger circle because the order is determined by visiting time.
When $\gamma$ is an SLE curve in $\D$ starting from $1$, $\xi_1 < \xi_2$ means that $\tau_{\xi_1} < \tau_{\xi_2}$, assuming $\gamma$ passes through $\xi_1$. Also, observe that circles in $\Xi$ are not necessarily contained in $\D$. In fact, we want to account for circles in $\Xi$ which have center in the boundary as well.

Further, we assume that $\Xi$ has a partition $\bigcup_{e \in \mathcal{E}} \Xi_e$ with the following properties:
\begin{enumerate}
\item For each $e \in \mathcal{E}$, the elements of $\Xi_e$ are concentric circles whose radii form a geometric sequence with a common ratio of $1/4$. For each $e$, let the common center be $z_e$. Note that elements of $\Xi_e$ are totally ordered. Let $R_e$ be the radius of the smallest circle (in the ordering on $\Xi$), and let $r_e$ be the radius of the largest circle. Then there is some integer $M \geq 0$ with $R_e=r_e 4^M$.
\item Let $A_e=\{ z \in \C : r_e \leq |z-z_e| \leq R_e \}$ denote the closed annulus containing all of the circles in $\Xi_e$. Then we assume that the collection of annuli $\{A_e\}_{e \in \mathcal{E}}$ is mutually disjoint.
\end{enumerate}
We make a couple of quick remarks about the generality of this assumption. It may be that $|\Xi_e|=1$, in which case $r_e=R_e$, and the annulus $A_e$ is just the single circle contained in $\Xi_e$. Also, if $e_1 \neq e_2$, that does not necessarily mean that $z_{e_1} \neq z_{e_2}$. 
In fact, it is possible that one annulus $A_{e_1}$ is enclosed by another annulus $A_{e_2}$.

\begin{theorem}\label{thm: concentric circles}
Let $\Xi$ be a family of circles with the properties listed above, and assume that $\gamma$ is a radial SLE$_{\kappa}$ curve in $\D$ from $1$ to $0$. For each $e \in \mathcal{E}$, let $y_e=1-|z_e|$. Then there exists a constant $C_{|\mathcal{E}|} < \infty$, which only depends on $\kappa$ and the size of the partition $|\mathcal{E}|$, so that
$$\P[ \bigcap_{\xi \in \Xi} \{ \gamma \cap \xi \neq \emptyset \}] \leq C_{|\mathcal{E}|} \prod_{e \in \mathcal{E}} \frac{P_{y_e}(r_e)}{P_{y_e}(R_e)}.$$

\end{theorem}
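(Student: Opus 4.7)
My plan is to prove Theorem \ref{thm: concentric circles} by induction on $|\mathcal{E}|$, using Theorem \ref{thm: ordered Green's function} and the domain Markov property, after (implicitly) generalizing the statement so that $\gamma$ may start from a prime end of an arbitrary $\D$-domain rather than from $1 \in \pa\D$. The first observation is a reduction within each family: the circles of $\Xi_e$ are concentric with radii decreasing by factor $4$, so $\gamma$ must cross each outer one before reaching any inner one, and hence $\bigcap_{\xi\in\Xi_e}\{\gamma\cap\xi\neq\emptyset\}$ is equivalent to the single-circle event $\{\gamma\cap\xi_{e,M_e}\neq\emptyset\}$. This lets me represent each family by its innermost circle while keeping the intermediate $\xi_{e,i}$ available as ``hat'' companions in Theorem \ref{thm: ordered Green's function}.

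In the sibling case, where no $A_e$ is enclosed by another $A_{e'}$ (so the discs $\D^*(\xi_{e,0};z_e)$ are pairwise disjoint), I decompose $\bigcap_{\xi\in\Xi}\{\gamma\cap\xi\neq\emptyset\}$ by the order in which the $\xi_{e,M_e}$ are first hit. There are at most $|\mathcal{E}|!$ such orderings, absorbed into $C_{|\mathcal{E}|}$. For a fixed ordering $(e_1,\ldots,e_N)$ I apply Theorem \ref{thm: ordered Green's function} with pairs $(\hat\xi_j,\xi_j)=(\xi_{e_j,0},\xi_{e_j,M_{e_j}})$ for $j\ge 1$ and a base slot $(\xi_0,\xi_0')$ drawn from the two innermost circles of the last family $e_N$, so that $R_0=r_0$ and $(r_0/R_0)^{\alpha/4}=1$; singleton families with $M_e=0$ are handled separately by Lemma \ref{lem: one point est disc}. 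The sibling assumption gives the disjointness hypothesis $\overline{\D^*(\hat\xi_j)}\cap\overline{\D^*(\hat\xi_k)}=\emptyset$, and Theorem \ref{thm: ordered Green's function} delivers exactly the target product $\prod_j P_{y_{e_j}}(r_{e_j})/P_{y_{e_j}}(R_{e_j})$.

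The main obstacle is the nested case, where one family's annulus sits inside another family's innermost disc, so that the disjointness hypothesis of Theorem \ref{thm: ordered Green's function} fails. There the nesting forces $\gamma$ to reach the innermost circle of the enclosing family $e$ before touching any circle of an enclosed family, so I apply the DMP at $\tau_{\xi_{e,M_e}}$: conditional on $\mathcal{F}_{\tau_{\xi_{e,M_e}}}$, the tail of $\gamma$ is a radial SLE$_\kappa$ in a smaller $\D$-domain containing the nested families, to which the inductive hypothesis with strictly smaller $|\mathcal{E}|$ applies and supplies the remaining factors. Combining the sibling step applied to the ``outermost'' families with the inductive descent through the nesting tree then yields the full product bound. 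The most delicate bookkeeping is to make the sibling decomposition work with only $O_{|\mathcal{E}|}(1)$ sub-orderings of first-hit times, so that each application of Theorem \ref{thm: ordered Green's function} produces exactly the required product structure.
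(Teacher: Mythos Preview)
Your sibling case is where the argument breaks. You decompose $\bigcap_\xi\{\gamma\cap\xi\neq\emptyset\}$ into at most $|\mathcal{E}|!$ events according to the order in which the \emph{innermost} circles $\xi_{e,M_e}$ are first hit, and then claim that each such event fits into the framework of Theorem \ref{thm: ordered Green's function} with $(\hat\xi_j,\xi_j)=(\xi_{e_j,0},\xi_{e_j,M_{e_j}})$. But the event $E$ in Theorem \ref{thm: ordered Green's function} demands $\tau_{\xi_{j}}<\tau_{\hat\xi_{j+1}}$: the curve must reach the innermost circle of $e_j$ \emph{before it first touches the outermost circle of $e_{j+1}$}. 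Knowing only the order of the $\tau_{\xi_{e,M_e}}$'s gives no such control; $\gamma$ can enter $\hat\xi_{e_{j+1}}$, wander part-way into $\Xi_{e_{j+1}}$, exit, hit $\xi_{e_j,M_{e_j}}$, and only later reach $\xi_{e_{j+1},M_{e_{j+1}}}$. When this happens, at time $\tau_{\xi_{e_j,M_{e_j}}}$ the ball $B(z_{e_{j+1}},R_{e_{j+1}})\cap\D$ is no longer contained in the current domain, so neither the hypothesis of Theorem \ref{thm: ordered Green's function} nor Lemma \ref{lem: one point est disc} (with the full radius $R_{e_{j+1}}$) applies. Your final sentence concedes that this ``bookkeeping'' is delicate, but it is in fact the entire content of the theorem; without it there is no argument.

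The paper handles exactly this interleaving. It sums not over the $|\mathcal{E}|!$ family-level orderings but over all admissible orderings $\sigma$ of \emph{all} circles in $\Xi$, refines each $\Xi_e$ into sub-blocks $\Xi_{(e,j)}$ visited consecutively under $\sigma$, and then applies Theorem \ref{thm: ordered Green's function} to each excursion away from a given $\Xi_{e_0}$. The crucial point is that the factor $(r_0/R_0)^{\alpha/4}$ in Theorem \ref{thm: ordered Green's function} --- which you set to $1$ --- is precisely what penalises the curve for leaving $\Xi_{e_0}$ after going $s$ layers deep and then returning, and this exponential decay in $s$ is what makes the (a priori $|\Xi|$-dependent) sum over all $\sigma$ collapse to a constant depending only on $|\mathcal{E}|$. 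Throwing that factor away and hoping for $O_{|\mathcal{E}|}(1)$ orderings is not viable. Your nested/inductive reduction via DMP is reasonable in spirit, but it rests on a sibling case that, as written, does not go through.
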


\no{\bf Remark.} The proof is similar to the proof of \cite[Theorem 3.2]{Rez Zhan upper bound}, but for completeness we include complete details. The strategy is to consider all possible orders $\sigma$ that the SLE curve $\gamma$ can visit all elements of $\Xi$. Under $\sigma$, $\gamma$ may pass through several elements of a family $\Xi_{e_0}$, leave and visit other $\Xi_e$'s, before returning to pass through the more inner circles in $\Xi_{e_1}$. Theorem \ref{thm: ordered Green's function} provides an estimate of the price paid by $\gamma$ in order to return to the interior circles of $\Xi_{e_0}$. This gives an estimate for the probability of $\bigcap_{\xi}\{  \gamma \cap \xi \neq \emptyset\}$ in the prescribed order $\sigma$. We then add up over all appropriate orders $\sigma$ and show that the constant only depends on $|\mathcal{E}|$.

\begin{proof}
Define $S$ to be the set of permutations $\sigma:\{1,2,\dots,|\Xi|\} \to \Xi$ such that $\xi_1<\xi_2$ implies $\sigma^{-1}(\xi_1) < \sigma^{-1}(\xi_2)$. Then $S$ is the set of viable orders in which $\gamma$ can visit the elements of $\Xi$ for the first time. For an ordering $\sigma \in S$, $\sigma(j) \in \Xi$ is the $j$-th circle visited by $\gamma$. Define the event $E^{\sigma}= \{ \tau_{\sigma(1)} < \cdots < \tau_{\sigma(|\Xi|)} < \infty\}$. Then $E:=\bigcap_{\xi \in \Xi} \{ \gamma \cap \xi \neq \emptyset \} =\bigcup_{\sigma \in S}E^{\sigma}$. What we need to do is to bound $\P[E^{\sigma}]$ for a prescribed $\sigma$.

Fix some $\sigma \in S$. Our first goal is to create a subpartition $\{ \Xi_i \}_{i \in I}$ of $\{\Xi_e\}_{e \in \mathcal{E}}$, where the elements of $\Xi_i$ receive first visits from $\gamma$ without interruption in the event $E^{\sigma}$. For each $e \in \mathcal{E}$, let $N_e=|\Xi_e|-1$, and label the elements of $\Xi_e$ by $\xi^e_0 < \cdots < \xi^e_{N_e}$. Define $J_e \subset \{ 0, 1, \dots, N_e\}$ by $J_e=\{ 1 \leq n \leq N_e : \sigma^{-1}(\xi^e_n) > \sigma^{-1}(\xi^e_{n-1})+1\} \cup \{0\}$. Then $n$ is a nonzero element of $J_e$ if, after $\gamma$ visits $\xi^e_{n-1}$, the curve visits other new circles in $\Xi$ before $\xi^e_n$. That is, there is some $\xi \in \bigcup_{e' \neq e}\Xi_e$ such that $\tau_{\xi^e_{n-1}} < \tau_{\xi} < \tau_{\xi^e_n}$. Order the elements of $J_e$ by $0=s_e(0) < s_e(1) < \cdots < s_e(M_e)$, where $M_e=|J_e|-1$ is the number of times that the progress of $\gamma$ through $\Xi_e$ is interrupted. Define $s_e(M_e+1)=N_e+1$. Using this framework, each $\Xi_e$ can be partitioned into $M_e+1$ subsets
$$\Xi_{(e,j)}=\{ \xi^e_n: s_e(j) \leq n \leq s_e(j+1)-1 \}, \text{ } 0 \leq j \leq M_e.$$ These are the elements of $\Xi_e$ which are visited without interruption. Let $I=\{ (e,j) : e \in \mathcal{E}, 0 \leq j \leq M_e \}$. Then $\{\Xi_{i}\}_{i \in I}$ is a finer partition with the desired properties.

For $i=(e,j)\in I$, let $z_i=z_{e_i}$, $y_i=1-|z_i|$, and $P_i = \frac{P_{y_i}(\rad_{\max\{ \Xi_i \}})}{P_{y_i}(\rad_{\min\{ \Xi_i \}})}$. Recall that $\min\{\Xi_i\}=\xi^{e_i}_{s_{e_i}(j)}$ and
$ \max\{ \Xi_i \}=\xi^{e_i}_{s_{e_i}(j+1)-1}$, and we use $\rad_\cdot$ to denote the radii of these two circles, respectively. By Lemma \ref{lem: one point est disc}, we can see that
\begin{equation}\label{eq: cP_i}
\P[ \tau_{\max\{ \Xi_i \}} < \infty | \mathcal{F}_{\tau_{\min\{ \Xi_i \}}} ] \leq C P_i.
\end{equation}
For $e \in \mathcal{E}$, let $P_e= P_{y_e}(r_e)/P_{y_e}(R_e)$. From Lemma \ref{lem: P_y bounds} we have
\begin{equation}\label{eq: 4alphaMe}
\prod_{j =0}^{M_e}P_{(e,j)} \leq 4^{\alpha M_e}P_e.
\end{equation}

Observe that $|I|=\sum_{e \in \mathcal{E}}(M_e+1)$ is the number of uninterrupted sequences of circles visited by $\gamma$ under $\sigma$. Then $\sigma$ induces a map $\ha{\sigma}:\{1, \dots, |I|\} \to I$ so that if $n_1 < n_2$, we have $\max\{ \sigma^{-1}( \Xi_{\ha{\sigma}(n_1)} ) \} < \min\{ \sigma^{-1}( \Xi_{\ha{\sigma}(n_2)} ) \} $, and $n_1=n_2-1$ implies $\max\{ \sigma^{-1}( \Xi_{\ha{\sigma}(n_1)} ) \}=\min\{ \sigma^{-1}( \Xi_{\ha{\sigma}(n_2)} ) \}-1$. In other words, $\ha{\sigma}$ is the order that the families $\Xi_i$ are visited. In particular, we can rewrite the event $E^{\sigma}$ as
$$E^{\sigma}= \{ \tau_{ \min \Xi_{\ha{\sigma}(1)} } < \tau_{\max \Xi_{\ha{\sigma}(1)}} < \cdots < \tau_{ \min \Xi_{\ha{\sigma}(|I|)} }<\tau_{\max \Xi_{\ha{\sigma}(|I|)}}< \infty \}.$$ We will use Theorem \ref{thm: ordered Green's function} to estimate the probability of this event.

Fix some $e_0 \in \mathcal{E}$, and let $n_j=\ha{\sigma}^{-1}((e_0,j))$ for $0 \leq j \leq M_{e_0}$. In the event $E^{\sigma}$, the family $\Xi_{(e_0,j)}$
is the $n_j$-th family to be visited by the curve. For $0 \leq j \leq M_e-1$, $n_{j+1} \geq n_j+2$ since at least one other family not contained in $\Xi_{e_0}$ must be hit by the curve between them. Fix $0 \leq j \leq M_{e_0}-1$, and let $m=n_{j+1}-n_j-1 \geq 1$. We are going to apply Theorem \ref{thm: ordered Green's function} to the family
\begin{itemize}
\item $\ha{\xi}_0=\min \Xi_{e_0}, \xi_0=\max \Xi_{(e_0,j)}=\max \Xi_{\ha{\sigma}(n_j)}$, and $\xi'_0=\min \Xi_{(e_0,j+1)}=\min \Xi_{\ha{\sigma}(n_{j+1})}$
\item $\ha{\xi}_k= \min \Xi_{\ha{\sigma}(n_j+k)}$, and $\xi_k=\max \Xi_{\ha{\sigma}(n_j+k)}$ for $1 \leq k \leq m$.
\end{itemize}
In plain words, $m$ is the number of other families $\{\Xi_i\}$ first visited by $\gamma$ between first visits of the $j$-th level and the $(j+1)$-th level of the family $\Xi_{e_0}$. The curves $\xi_k, \ha{\xi}_k$ are the $k$-th family first visited before returning to $\Xi_{(e_0, j+1)}$.

Define an event by
$$E^{\sigma}_{[ \max \Xi_{\ha{\sigma}(n_j)}, \min \Xi_{\ha{\sigma}(n_{j+1})} ]}=\{ \tau_{\xi_0} < \tau_{\ha{\xi}_{1}} < \tau_{\xi_1} < \cdots < \tau_{\xi_m} < \tau_{\xi'_0}\} \in \mathcal{F}_{\tau_{ \min\Xi_{\ha{\sigma}(n_{j+1})} }}.$$
Theorem \ref{thm: ordered Green's function} implies that
$$\P[ E^{\sigma}_{[ \max \Xi_{\ha{\sigma}(n_j)}, \min \Xi_{\ha{\sigma}(n_{j+1})} ]} | \mathcal{F}_{\max \Xi_{\ha{\sigma}(n_j)}}] \leq C^m 4^{-\frac{\alpha}{4} (s_{e_0}(j+1)-1)} \prod_{n=n_{j}+1}^{n_{j+1}-1}P_{\ha{\sigma}(n)}.$$
Varying $j=0,1, \dots, M_{e_0}-1$ and using inequality (\ref{eq: cP_i}), we see that

$$\P[E^{\sigma}] \leq C^{|I|} 4^{-(\alpha/4) \sum_{j=1}^{M_{e_0}} s_{e_0}(j)-1 } \prod_{i \in I} P_i. $$
If we use inequality (\ref{eq: 4alphaMe}) and $|I|=\sum_e M_e+1$, we can deduce that the right hand side is bounded above by
$$C^{|\mathcal{E}|}C^{\sum_{e \in \mathcal{E}}M_e}4^{-(\alpha/4) \sum_{j=1}^{M_{e_0}} s_{e_0}(j) } \prod_{e \in \mathcal{E}}P_e.$$
Recall that this estimate was based on a fixed $e_0 \in \mathcal{E}$, but the left hand side does not depend on this choice. Taking the  geometric average with respect to $e_0 \in \mathcal{E}$, we can see that
$$\P[E^{\sigma}] \leq C^{|\mathcal{E}|} C^{\sum_{e \in \mathcal{E}}M_e} 4^{-\frac{\alpha}{4|\mathcal{E}|  } \sum_{e \in \mathcal{E}} \sum_{j=1}^{M_e}s_e(j) }  \prod_{e \in \mathcal{E}}P_e.$$

Note that the finer partition $I$ and associated terms $M_e, s_e(j)$ are dependent on our initial choice of order $\sigma$. Using the fact that $E=\cup E^{\sigma}$ and the above inequality, we get
\begin{equation}\label{eq: final circle inequality}
\P[ E ] \leq C^{|\mathcal{E}|} \(\sum_{\( M_e; (s_e(j))_{j=1}^{M_e} \)_{e \in \mathcal{E}}} |S_{(M_e,(s_e(j)))}| C^{\sum_{e \in \mathcal{E}}M_e }4^{-\frac{\alpha}{4|\mathcal{E}|}\sum_{e \in \mathcal{E}}\sum_{j=1}^{M_e}s_e(j) }\)  \prod_{e \in \mathcal{E}}P_e,
\end{equation}
 where
$$S_{(M_e,s_e(j)))} =\{ \sigma \in S : M_e^{\sigma}=M_e, s^{\sigma}_e(j) =s_e(j), \text{ for } 0 \leq j \leq M_e, e \in \mathcal{E} \}$$
and the first sum in inequality (\ref{eq: final circle inequality}) is over all possible $\( M_e; (s_e(j))_{j=1}^{M_e} \)_{e \in \mathcal{E}}$. That is, for each $e$, all possible $M_e \geq 0$ and possible orderings $0=s_e(0) < s_e(1)< \cdots < s_e(M_e) \leq N_e$. Recall that $N_e=|\Xi_e|-1$ is fixed with the initial partition $\mathcal{E}$. To finish the proof of the theorem, it suffices to show that the large term in the parenthesis in (\ref{eq: final circle inequality}) can be bounded above by some finite constant depending only on $|\mathcal{E}|$ and $\kappa.$

We claim that
\begin{equation}\label{eq: number of permutations}
|S(M_e, s_e(j))|\leq |\mathcal{E}|^{\sum_{e \in \mathcal{E}}M_e+1 }.
\end{equation}
Notice that the pair $(M_e, s_e(j))$ completely determines the sub-partition $\{\Xi_i\}_{i \in I^{\sigma}}$. Suppose $\{\Xi_i\}_{i \in I^{\sigma}}$ is given. Then the ordering $\sigma$ can be recovered from the induced ordering $\ha{\sigma}: \{1, \dots, |I^{\sigma}|\} \to I^{\sigma}$. This is because the order that circles in any given $\Xi_i$ are visited is predetermined by the ordering on $\Xi$, and so moving from $\ha{\sigma}$ to $\sigma$ requires no new information. Next, we claim that $\ha{\sigma}$ is determined by knowing $e_{\ha{\sigma}(n)}$, for each $1 \leq n \leq |I^{\sigma}|$. If $e_{\ha{\sigma}(n)}=e_0$, then $\ha{\sigma}(n)=(e_0,j_0)$ where $j_0$ can be determined by $j_0=\min\{ 0 \leq j \leq M_{e_0}: (e_0,j): (e_0,j) \notin \ha{\sigma}(m), m<n \}$. There are $|I^{\sigma}|=\sum_{e \in \mathcal{E}} (M_e+1)$ terms to determine for $e_{\ha{\sigma}(n)}$, each of which has at most $|\mathcal{E}|$ possibilities, which proves (\ref{eq: number of permutations}). Using this to estimate the constant in (\ref{eq: final circle inequality}), we see that
$$\sum_{\( M_e; (s_e(j))_{j=1}^{M_e} \)_{e \in \mathcal{E}}} |S_{(M_e,(s_e(j)))}| C^{\sum_{e \in \mathcal{E}}M_e }4^{-\frac{\alpha}{4|\mathcal{E}|}\sum_{e \in \mathcal{E}}\sum_{j=1}^{M_e}s_e(j) }$$
$$ \leq \sum_{\( M_e; (s_e(j))_{j=1}^{M_e} \)_{e \in \mathcal{E}}} |\mathcal{E}|^{\sum_{e \in \mathcal{E}} (M_e+1)} C^{\sum_{e \in \mathcal{E}}M_e }4^{-\frac{\alpha}{4|\mathcal{E}|}\sum_{e \in \mathcal{E}}\sum_{j=1}^{M_e}s_e(j) }$$
$$=|\mathcal{E}|^{|\mathcal{E}|} \sum_{\( M_e; (s_e(j))_{j=1}^{M_e} \)_{e \in \mathcal{E}}} \prod_{e \in \mathcal{E}}  ( C |\mathcal{E}| )^{M_e} 4^{-\frac{\alpha}{4|\mathcal{E}|} \sum_{j=1}^{M_e}s_e(j)}$$
$$=|\mathcal{E}|^{|\mathcal{E}|} \prod_{e \in \mathcal{E}} \sum_{M_e=1}^{N_e} \( C |\mathcal{E}| \)^{M_e} \sum_{0 =s_e(0) < \cdots < s_e(M_e) \leq N_e} 4^{-\frac{\alpha}{4|\mathcal{E}|} \sum_{j=1}^{M_e}s_e(j)}
$$
$$ \leq |\mathcal{E}|^{|\mathcal{E}|} \prod_{e \in \mathcal{E}} \sum_{M=0}^{\infty} \( C|\mathcal{E}| \)^M \sum_{s(1)=1}^{\infty} \cdots \sum_{s(M)=M}^{\infty} 4^{-\frac{\alpha}{4|\mathcal{E}|} \sum_{j=1}^M s(j)}$$
$$\leq |\mathcal{E}|^{|\mathcal{E}|} \prod_{ e \in \mathcal{E}} \sum_{M=0}^{\infty} \(C |\mathcal{E}| \)^M \prod_{j=1}^M \sum_{s(j)=j}^{\infty} 4^{-\frac{\alpha}{4|\mathcal{E}|}s(j)} =|\mathcal{E}|^{|\mathcal{E}|} \( \sum_{M=0}^{\infty} \( \frac{C|\mathcal{E}|}{1-4^{-\frac{\alpha}{4|\mathcal{E}|}}} \)^M 4^{-\frac{\alpha}{8|\mathcal{E}|}M(M+1)} \)^{|\mathcal{E}|}.$$
The last equality comes from observing that the terms in the product no longer depended on $e$ and evaluating the geometric series. Note that this bound only depends on $|\mathcal{E}|$ and $\kappa$. It is finite because inside the exponent it has the form $\sum a^nb^{-n^2} < \infty$ for some $b>1$.
\end{proof}

\section{Main theorems}
The proof of Theorem \ref{thm: multi point estimate radial} is similar to that of \cite[Theorem 1.1]{Rez Zhan upper bound}.
The strategy  is to construct a family of circles $\Xi$ and a partition $\{\Xi_{e}\}_{e \in \mathcal{E}}$ satisfying the hypothesis of Theorem \ref{thm: concentric circles} from the discs $\{ |z-z_j| \leq r_j \}$ and the distances $l_j$. The constant given will depend on the size of the partition $\mathcal{E}$, but then it can be shown that that $|\mathcal{E}|$ can be bounded above in a way which depends only on the number of points $n$.

\begin{proof}[Proof of Theorem \ref{thm: multi point estimate radial}]
We can assume without loss of generality that for each $j=1, \dots, n$, the radius $r_j$ satisfies $r_j=l_j/4^{h_j}$ for some integer $h_j \geq 1$. This is because the ratio must satisfy $4^{-h_j-1} \leq r_j/l_j \leq 4^{-h_j}$ for some integer $h_j$, and any increase of $r_j$ by at most a factor of $4$ only affects the constant for each term $j$. 

\textbf{Constructing $\Xi$:} for each $1 \leq j \leq n$, construct a sequence of circles by
$$\xi^s_j = \{ |z-z_j| =\frac{l_j}{4^s} \}, \text{ for } 1 \leq s \leq h_j.$$
The family of circles $\{\xi^s_j\}_{j,s}$ may not be disjoint, so we may have to remove some circles. For any fixed $k \leq n$, let $D_k=\{ |z-z_k| \leq l_k/4 \}$, which is a closed disc containing all of the circles centered at $z_k$. For $j < k \leq n$, define $I_{j,k} =\{ \xi_j^s : 1 \leq s \leq h_j, \text{ and } \xi^s_j \cap D_k \neq \emptyset  \}$, 
and $$\Xi=\{ \xi^s_j : 1 \leq j \leq n, 1 \leq s \leq h_j \} \backslash \bigcup_{1 \leq j < k \leq n}I_{j,k}.$$ Then $\Xi$ is composed of mutually disjoint circles by construction. If $\dist(\gamma, z_j) \leq r_j$, then $\gamma$ intersects each $\xi^s_j$ for $1 \leq s \leq h_j$, and therefore
\begin{equation}\label{eq: use concentric circles}
\P[ \bigcap_{j=1}^n \{ \dist(z_j,\gamma) \leq r_j\} ] \leq \P[ \bigcap_{\xi \in \Xi} \{ \gamma \cap \xi \neq \emptyset \} ].
\end{equation}
In order to apply Theorem \ref{thm: concentric circles}, we need to partition $\Xi$.

\textbf{Partitioning $\Xi$:} The family of circles already has a partition $\{\Xi_j\}_{j=1}^n$, where $\Xi_j$ is the set of circles $\xi \in \Xi$ with center $z_j$, but this may not be sufficiently fine to satisfy Theorem \ref{thm: concentric circles}. For example, it may be that there are circles centered at $z_3$ which lie between circles centered at $z_2$. To make a finer partition, for each $\Xi_j$, we will construct a graph $G_j$ whose connected components are the uninterrupted circles. Being more precise, the vertex set of $G_j$ is $\Xi_j$, and $\xi_1,\xi_2 \in \Xi_j$ are connected by an edge if they are concentric circles with ratio of radii being $4$,
and the open annulus between $\xi_1$ and $\xi_2$ contains no other circles from $\Xi$. Let $\mathcal{E}_j$ denote the set of connected components in $\Xi_j$. Then each $\Xi_j$ can be partitioned into $\{\Xi_e\}_{e \in \mathcal{E}_j}$, where $\Xi_e$ is the vertex set of $e \in \mathcal{E}_j$. The circles in $\Xi_e$, for $e \in \mathcal{E}_j$, are all concentric circles with center $z_j$ whose radii form a geometric sequence with common ratio $1/4$, and the closed annuli $A_e$ are mutually disjoint.

By construction, we see that for any $j < k$ and $ e \in \mathcal{E}_j$, the annulus $A_e$ does not intersect $D_k$, which contains every $A_e$ for $e \in \mathcal{E}_k$. Thus, $\mathcal{E}:=\bigcup_{j=1}^{n} \mathcal{E}_j$ satisfies the hypothesis of Theorem \ref{thm: concentric circles}, and so we can bound (\ref{eq: use concentric circles}) by
\begin{equation}\label{eq: use concentric circles2}
\P[ \bigcap_{\xi \in \mathcal{E}} \{ \gamma \cap \xi \neq \emptyset \} ] \leq C_{|\mathcal{E}|} \prod_{j=1}^n \prod_{e \in \mathcal{E}_j} \frac{P_{y_j}(r_e)}{P_{y_j}(R_e)},
\end{equation}
where $r_e=\rad (\max\{ \xi \in \mathcal{E}_e \})$ and $R_e =\rad (\min\{ \xi \in \mathcal{E}_e \})$, where again the ordering for the maximum and minimum is in terms of crossing time.  It suffices now to show that $|\mathcal{E}|$ is comparable to some value depending on $n$, and that $\prod_{e \in \mathcal{E}_j} \frac{P_{y_j}(r_e)}{P_{y_j}(R_e)}$ is comparable to $\frac{P_{y_j}(r_j)}{P_{y_j}(R_j)}$, where $R_j=l_j/4$.

\textbf{Bounding $|\mathcal{E}|$:} First, we observe a useful fact. If $1 \leq j < k \leq n$, from $l_k \leq |z_j-z_k|$, we get
\begin{equation}\label{eq: fact for |E|}
\frac{\max_{z \in D_k}\{ |z-z_j| \}}{\min_{z \in D_k} \{ |z-z_j| \}}=\frac{|z_j-z_k|+l_k/4}{|z_j-z_k|-l_k/4} \leq \frac{5}{3}<4.
\end{equation}
This inequality has the following two consequences:
\begin{itemize}
\item[a)] If $1 \leq j < k \leq n$, then $|I_{j,k}| \leq 1$.
\item[b)] If $1 \leq j < k \leq n$, then $\bigcup_{\xi \in \Xi_k} \xi \subset D_k$ intersects at most 2 annuli $\{ \frac{l_j}{4^r} \leq |z-z_j| \leq \frac{l_j}{4^{r-1}} \}$.
\end{itemize}

These consequences can be used to bound $|\mathcal{E}_j|$. We may obtain $G$ by removing vertices and edges from a path graph $\ha G$, whose vertex set is $\{\xi_j:1\le j\le h_j\}$, and two vertices are connected by an edge iff the bigger radius is $4$ times the smaller one. Every edge $e$ of $\ha G$ determines an annulus, denoted by $A_e$. The vertices removed are the elements in $I_{j,k}$, $k > j$; and the edges removed are those
$e$ such that $A_e$ intersects some $\xi\in \Xi_k$ with $ k > j$. Thus, the
total number of vertices or edges removed is not bigger than $3(n-j)$. So we get $|\mathcal{E}_j| \leq 1 + 3(n-j)$. Summing over $j$ yields
$$|\mathcal{E}| = \sum_{j=1}^n |\mathcal{E}_j| \leq \sum_{j=1}^n 1+3(n-j) = n+ \frac{3n(n-1)}{2}.$$
This completes the proof that $C_{|\mathcal{E}|}\le C_n < \infty$ for some $C_n$ depending only on $n$ and $\kappa$.

\textbf{Final estimate:} To finish the proof, we need to show that
$$\prod_{e \in \mathcal{E}_j} \frac{P_{y_j}(r_e)}{P_{y_j}(R_e)} \leq C_n \frac{P_{y_j}(r_j)}{P_{y_j}(R_j)},\quad 1\le j\le n.$$ We introduce some new notation. For any annulus $A=\{z:  r \leq |z-z_0| \leq R \}$ with  $z_0 \in \overline{\D}$, let $P(A)=P_{y_0}(r)/P_{y_0}(R)$, where $y_0=1-|z_0|$. For $1 \leq s \leq h_j$, let $A_{j,s} =\{ l_j/4^{s} \leq | z-z_j| \leq l_j/4^{s-1} \}$, and let $S_j=\{ s \in \{ 1, \dots, h_j \} : A_{j,s} \subset \bigcup_{e \in \Xi_j}A_e \}$. Using this new notation, what we want to show is that
$$\prod_{e \in \mathcal{E}_j} \frac{P_{y_j}(r_e)}{P_{y_j}(R_e)} = \prod_{s \in S_j}P(A_{j,s}) \leq C_n\frac{P_{y_j}(r_j)}{P_{y_j}(R_j)} = C_n\prod_{s=1}^{h_j}P(A_{j,s}).$$

By the estimate in Lemma \ref{lem: P_y bounds}, for each $e \in \mathcal{E}_j$, we have $P_{y_j}(r_e)/P_{y_j}(R_e) \geq (r_e/R_e)^{\alpha} = 4^{-\alpha}$. Let $S_j^c= \{ 1,\dots,h_j \} \backslash S_j$. Then
$$\prod_{s \in S_j} P(A_{j,s}) =\frac{ \prod_{s=1}^{h_j} P(A_{j,s}) }{\prod_{s \in S_j^c} P(A_{j,s})} \leq \frac{P_{y_j}(r_j)}{P_{y_j}(R_j)} 4^{\alpha |S_j^c|}.$$ We need to estimate $|S_j^c|$. If $s \in S_j^c$, then either $s=1$ or there is some $k >j$ so that $D_k \cap A_{j,s} \neq \emptyset$. By consequence $b)$ of (\ref{eq: fact for |E|}), for each $k>j$, this happens at most twice for each $k >j$. Therefore, $|S_j^c| \leq 1+ \sum_{k>j} 2=1+2(n-j)$. By equation (\ref{eq: use concentric circles2}),
$$\P[\bigcap_{\xi \in \Xi} \{\gamma \cap \xi \neq \emptyset\}] \leq C_n\prod_{j=1}^n \frac{P_{y_j}(r_j)}{P_{y_j}(R_j)} 4^{\alpha(1+2(n-j))}= C'_n  \prod_{j=1}^n \frac{P_{y_j}(r_j)}{P_{y_j}(R_j)}.$$
\end{proof}


\begin{proof}[Proof of Theorem \ref{thm: multi point estimate whole-plane}]
Define $R=\max\{|z_j |:1\leq j \leq n\}$. 
Fix a big number $N>R$.
Let $T=\inf\{t>-\infty: |\gamma^*(t)|=N\}$, which is a finite stopping time. Let $D_T$ be the connected component of $\C\sem \gamma^*(-\infty,T]$ that contains $0$. Let $\phi$ be the conformal map from $D_T$ onto $\D$ which fixes $0$ and sends $\gamma^*(T)$ to $1$. By DMP of whole-plane SLE, conditioned on $\gamma^*(-\infty,T]$, the path $\gamma:=\phi(\gamma^*(T+\cdot))$ is a radial SLE$_{\kappa}$ trace in $\D$ from $1$ to $0$.

For $1\le k\le n$, let $z_k^T=\phi(z_k)\in\D$, $y_k^T=1-|z_k^T|$, $l^T_k=\min\{|z^T_k|,|z^T_k-1|,|z^T_k-z^T_1|,\dots,|z^T_k-z^T_{k-1}\}$, and $r_k^T=\max\{ |\phi(z)-\phi(z_k)| : |z-z_k|=r_k \}$. By Theorem \ref{thm: multi point estimate radial},
$$\P[\dist(z_k,\gamma^*)\le r_k,1\le k\le n|\mathcal{F}_T]\le \P[\dist(z_k^T,\gamma)\le r_k^T,1\le k\le n]\le C_n \prod_{k=1}^n \frac{P_{y_k^T}(r_k^T\wedge l_k^T)}{P_{y_k^T}( l_k^T)}.$$
By Koebe's $1/4$ theorem and distortion theorem, as $N\to\infty$, $|\phi'(0)|\to 0$, $y_k^T\to 1$, $r^T_k/(|\phi'(0)|r_k)\to 1$ and $l^T_k/ (|\phi'(0)|l_k)\to 1$, which implies that
$\prod_{k=1}^n \frac{P_{y_k^T}(r_k^T\wedge l_k^T)}{P_{y_k^T}( l_k^T)}\to \prod_{k=1}^n \big(\frac{r_k\wedge l_k}{l_k}\big)^{2-d}$.
The proof is complete by taking expectation and letting $N\to\infty$.
\end{proof}

\begin{proof}[Proof of Theorem \ref{thm: Minkowski content moments}]
First, we prove the theorem for radial SLE. Recall that we defined Cont$_d(E;r)=r^{d-2}\Area\{z \in \C : \dist(z,E) < r\}$ for any $r>0$. Fixing $r>0$, we have
$$\E[\Cont_d(\gamma;r)^n]=r^{n(d-2)}\E\[ \( \int_{\D} \I_{\{ z: \dist(z,\gamma) < r \}}(z)dA(z) \)^n \]$$
$$=\int_{\D^n} r^{n(d-2)} \P\[ \dist(z_1,\gamma)<r, \dots, \dist(z_n,\gamma)<r \] dA(z_1)\dots dA(z_n).$$ The last equality holds by Fubini's theorem. By Theorem \ref{thm: multi point estimate radial}, if $0<r<l_1,\dots,l_n$, then
$$r^{n(d-2)} \P[ \dist(z_k,\gamma) <r, \forall k=1,\dots,n ] \leq r^{n(d-2)}C_n \prod_{k=1}^n \frac{P_{y_k}(r  )}{P_{y_k}(l_k)}\leq C_n \prod_{j=1}^n l_j^{d-2}.$$

Thus, if $f(z_1,\dots,z_n)= \prod_{k=1}^n \min\{ |z_k|,|z_k-1|, |z_k-z_1|, \dots, |z_k-z_{k-1}| \}^{d-2}$ and $0<r<l_1,\dots,l_n$, then
$$\E[ \Cont_d(\gamma ;r)^n ] \leq C_n \int_{\D^n}f(z_1,\dots,z_n) dA(z_1) \dots dA(z_n).$$
By Fatou's Lemma,
$$\E[{\underline{\Cont}}_d(\gamma)^n ] \leq \liminf_{r \to 0} \E[\Cont_d(\gamma;r)^n] \leq C_n \int_{\D^n} f(z_1,\dots,z_n) dA(z_1)\dots dA(z_n).$$
If we can show that $f$ is integrable, then we are done.

Fix $k =1, \dots, n$, and let $z_1, \dots z_{k-1} \in \D$ be arbitrary. Let $z_{-1}=0$ and $z_0=1$. For any $k$,
$$\int_{\D} \min\{ |z_k|, |z_k-1|, |z_k-z_1|, \dots, |z_k-z_{k-1}| \}^{d-2}dA(z_k) \leq \sum_{j=-1}^{k-1} \int_{\D} |z_k-z_j|^{d-2}dA(z_k).$$ Note that for each $j$, $\D-z_j \subset 2\D,$ and so the righthand side of the above inequality is
$$\leq (k+1) \int_{2 \D} |z|^{d-2}dA(z)=(k+1)\pi \int_0^2 r^{d-1}dr < \infty.$$
By Fubini's theorem, it follows that $\int_{\D^n} f dA \dots dA < \infty$, and so we are done in the radial case.

The proof for whole-plane SLE$_{\kappa}$ follows similarly. We start off by writing
$$\E[ \Cont_d(\gamma^* \cap D);r)^n]= \int_{D^n} r^{n(d-2)}\P[\dist(z_1, \gamma^*)<r, \dots, \dist(z_n,\gamma^*)<r]dA(z_1)\dots dA(z_n).$$
The function $f(z_1,\dots,z_n)$ in this case is defined by $$f(z_1,\dots,z_n) = \prod_{k=1}^n \min\{ |z_k|, |z_k-z_1|, \dots, |z_k-z_{k-1}| \}^{d-2}.$$ Then
$$\E[\underline{\Cont}_d(\gamma^* \cap D)^n] \leq \int_{D^n}f(z_1,\dots, z_n) dA(z_1)\dots dA(z_n).$$
If $D \subset M\D$ for $M<\infty$, then $D-z_j \subset 2M \D$ for each $z_j \in D$, so we can perform the same bound as in the radial case, except integrating over $2M\D$ rather than $2\D$.
\end{proof}


\begin{thebibliography}{9}


\bibitem{Ahlf}
Lars V.\ Ahlfors. {\it Conformal invariants: topics in geometric function theory}. McGraw-Hill Book Co., New York, 1973.

\bibitem{AKL}
Tom Alberts, Michael J.\ Kozdron and Gregory F.\ Lawler. The Green's function for the radial Schramm-Loewner evolution. {\it J.\ Phys.\  A}, {\bf 45}:no.\ 49, 2012.

\bibitem{Beffara}
Vincent Beffara.  The dimension of SLE curves. {\it Ann.\ Probab.}, {\bf 36}:1421-1452, 2008.

\bibitem{Law}
Gregory F.\ Lawler. {\em Conformally Invariant Processes in the Plane}, Amer. Math. Soc, 2005.

\bibitem{Lawler continuity of tip}
Gregory F.\ Lawler. {Continuity of radial and two-sided radial SLE$_{\kappa}$ at the terminal point}. {\em Contemporary Mathematics}, AMS, {\bf 590}:101-124, 2013.

\bibitem{Law Rez mink content}
Gregory F.\ Lawler and Mohammad A.\ Rezaei. Minkowski content and natural parametrization for the Schramm-Loewner evolution. {\it Ann.\ Probab.}, {\bf 43}(3):1082-1120, 2015.

\bibitem{Law Rez two point Green's}
Gregory F.\ Lawler and Mohammad A.\ Rezaei. {Up-to-constant bounds on the two-point Green's function for SLE curves}. {\it Electron.\ Commun.\ Probab.}, {\bf 20}:1-13, no.\ 45, 2015.

\bibitem{Law Sheff}
Gregory F.\ Lawler and Scott Sheffield. A natural parametrization for the Schramm-Loewner evolution.  {\it Ann.\ Probab.}, {\bf 39}:1896-1937, 2011.

\bibitem{LSW}
 Gregory F.\ Lawler, Oded Schramm and Wendelin Werner. {Conformal invariance of planar loop-erased random walks and uniform spanning trees}. {\it Ann.\ Probab.}, {\bf 32}(1B):939-995, 2004.

\bibitem{Law Vik radial}
Gregory F.\  Lawler and  Fredrik Viklund. Convergence of radial loop-erased random walk in the natural parametrization.  arXiv:1703.03729, 2017.

\bibitem{Law Vik}
Gregory F.\  Lawler and  Fredrik Viklund. Convergence of loop-erased random walk in the natural parametrization. arXiv:1603.05203, 2016.

\bibitem{Law Werness}
Gregory F.\ Lawler and Brent Werness. Multi-point Green's function for SLE and an estimate of Beffara. {\it Ann.\ Probab.},  {\bf 41}(3A):1513-1555, 2013.

\bibitem{Law Zhou}
Gregory F.\ Lawler and Wang Zhou. SLE curves and natural parametrization. {\it Ann.\ Probab.}, {\bf 41}(3A):1556-1584, 2013.



\bibitem{Rez Zhan upper bound}
Mohammad A.\ Rezaei and Dapeng Zhan. Higher moments of the natural parameterization for SLE curves, {\it Ann.\ I.\ H.\ Poincare-Pr.} {\bf 53}(1):182-199, 2017.

\bibitem{Rez Zhan lower bound}
Mohammad A.\ Rezaei and Dapeng Zhan. Green's function for chordal SLE curves. In preprint, arXiv:1607.03840, 2016. To appear in {\it Probab.\ Theory Relat.\ Fields}.

\bibitem{Rhode Schramm}
Steffen Rohde and Oded Schramm. Basic properties of SLE. {\it Ann.\  Math.}, {\bf 161}:879-920, 2005.

\bibitem{Schramm}
Oded Schramm. Scaling limits of loop-erased random walks and uniform spanning trees, {\it Israel J. Math}. {\bf 118}, 221-288, 2000.

\bibitem{Schramm Sheff}
Oded Schramm and Scott Sheffield. Harmonic explorer and its convergence to SLE$_4$. {\it Ann.\ Probab.}, {\bf 33}:2127-2148, 2005.

\bibitem{Smirnov perc}
Stanislav Smirnov. Critical percolation in the plane: Conformal invariance, Cardy's formula, scaling limits. {\it C.\ R.\ Acad.\ Sci.\ Paris Ser.\ I Math.}, {\bf 33}:239-244, 2001.

\bibitem{Smirnov Ising}
 Dmitry Chelkak and Stanislav Smirnov. Universality in the 2D Ising model and conformal invariance of fermionic observables. {\it Inv.\ Math.}, {\bf 189}(3):515-580, 2012.

\end{thebibliography}
\end{document}